\newcommand{\be}{\begin{equation}}
\newcommand{\ee}{\end{equation}}
\newcommand{\T}{{\mathcal T}}
\newcommand{\R}{{\mathbb R}}
\newcommand{\Z}{{\mathbb Z}}
\newtheorem{thm}{Theorem}[section]
\newtheorem*{thm*}{Theorem}
\newtheorem{cor}[thm]{Corollary}
\theoremstyle{definition}
\begin{document}
\title{Notes on Differential Forms}
\author{Lorenzo Sadun}
\address{Department of Mathematics\\The University of
  Texas at Austin\\ Austin, TX 78712} 

\date{\today}

\maketitle

\setlength{\baselineskip}{.6cm}

\chapter{Forms on $\R^n$}
This is a series of lecture notes, with embedded problems,
aimed at students studying differential topology. 

Many revered texts, such as Spivak's {\em Calculus on Manifolds} and 
Guillemin and Pollack's {\em Differential Topology} introduce forms 
by first working through properties of alternating tensors. Unfortunately,
many students get bogged down with the whole notion of tensors
and never get to the punch lines: Stokes' Theorem, de Rham cohomology,
Poincare duality, and the realization of various topological invariants
(e.g. the degree of a map) via forms, none of which actually require tensors
to make sense! 

In these notes, we'll follow a different approach, following the philosophy
of Amy's Ice Cream: 

\centerline{\em Life is uncertain. Eat dessert first.}

We're first going to 
define forms on $\R^n$ via unmotivated formulas, develop some proficiency
with calculation, show that forms behave
nicely under changes of coordinates, and prove Stokes' Theorem. 
This approach has the disadvantage that it 
doesn't develop deep intuition, but the strong advantage that the key
properties of forms emerge quickly and cleanly. 

Only {\bf then}, in Chapter 3, will we go back and 
show that tensors
with certain (anti)symmetry properties have the exact same properties as
the formal objects that we studied in Chapters 1 and 2. This allows us to
re-interpret all of our old results from a more modern perspective, and
move onwards to using forms to do topology. 

\section{What is a form?}

On $\R^n$, we start with the symbols $dx^1, \ldots, dx^n$, which at this point
are pretty much meaningless. We define a multiplication operation on
these symbols, denoted by a $\wedge$, subject to the condition
$$ dx^i \wedge dx^j = - dx^j \wedge dx^i.$$
Of course, we also want the usual properties of multiplication to also hold.
If $\alpha, \beta, \gamma$ are arbitrary products of $dx^i$'s, and if $c$
is any constant, then
\begin{eqnarray}
(\alpha + \beta) \wedge \gamma & = & \alpha \wedge \gamma 
+ \beta \wedge \gamma\cr
\alpha \wedge (\beta+\gamma) & = & \alpha \wedge \beta + \alpha \wedge \gamma \cr
(\alpha \wedge \beta) \wedge \gamma & = & \alpha \wedge (\beta \wedge \gamma) \cr
(c \alpha) \wedge \beta & = & \alpha \wedge (c \beta) = c(\alpha \wedge \beta)
\end{eqnarray}

Note that the anti-symmetry implies that $dx^i \wedge dx^i = 0$. 
Likewise, if $I = \{i_1,
\ldots, i_k\}$ is a list of indices where some index gets repeated, 
then $dx^{i_1}\wedge \cdots \wedge dx^{i_k}=0$, since we can swap the order
of terms (while keeping track of signs) until the same index comes up twice
in a row. For instance, 
$$ dx^1 \wedge dx^2 \wedge dx^1 = - dx^1 \wedge dx^1 \wedge dx^2 = 
-(dx^1 \wedge dx^1) \wedge dx^2 = 0.$$
  
\begin{itemize}
\item
A {\em 0-form} on $\R^n$ is just a function.
\item A {\em 1-form} is an expression of the form $\sum_i f_i(x) dx^i$, 
where $f_i(x)$ is a function and $dx^i$ is one of our meaningless symbols.
\item A {\em 2-form} is an expression of the form $\sum_{i,j} f_{ij}(x) dx^i
\wedge dx^j$. 
\item A {\em $k$-form} is an expression of the form $\sum_I f_I(x) dx^I$,
where $I$ is a subset $\{i_1,\ldots,i_k\}$ of $\{1,2,\ldots,n\}$ and 
$dx^I$ is shorthand for $dx^{i_1} \wedge \cdots \wedge dx^{i_k}$. 
\item If $\alpha$ is a $k$-form, we say that $\alpha$ has {\em degree $k$}. 
\end{itemize}

For instance, on $\R^3$
\begin{itemize}
\item 0-forms are functions
\item 1-forms look like $P dx + Q dy + R dz$, where $P$, $Q$ and $R$ are
functions and we are writing $dx, dy, dz$ for $dx^1$, $dx^2$, $dx^3$. 
\item 2-forms look like $P dx\wedge dy + Q dx \wedge dz + R dy \wedge dz$. 
Or we could just as well write $P dx \wedge dy - Q dz \wedge dx + R
dy \wedge dz$. 
\item 3-forms look like $f dx \wedge dy \wedge dz$. 
\item There are no (nonzero) forms of degree greater than 3.
\end{itemize}

When working on $\R^n$, there are exactly $n \choose k$ linearly independent 
$dx^I$'s of degree $k$, and $2^n$ linearly independent $dx^I$'s in all 
(where we include $1=dx^I$ when $I$ is the empty list). If $I'$ is a permutation
of $I$, then $dx^{I'} = \pm dx^I$, and it's silly to include both 
$f_I dx^I$ and $f_{I'} dx^{I'}$ in our expansion of a $k$-form. Instead, one
usually picks a preferred ordering of $\{i_1,\ldots, i_k\}$ (typically 
$i_1 < i_2 < \cdots < i_k$) and restrict our sum to $I$'s of that sort. When
working with 2-forms on $\R^3$, we can use $dx \wedge dz$ or $dz \wedge dz$,
but we don't need both. 


If $\alpha = \sum \alpha_I(x) dx^I$ is a $k$-form 
and $\beta = \sum \beta_J(x) dx^J$ is an $\ell$-form, then we define
$$ \alpha \wedge \beta = \sum_{I,J} \alpha_I(x)\beta_J(x) dx^I \wedge dx^J.$$
Of course, if $I$ and $J$ intersect, then $dx^I \wedge dx^J=0$. Since 
going from $(I,J)$ to $(J,I)$ involves $k\ell$ swaps, we have 
$$dx^J \wedge dx^I = (-1)^{k\ell} dx^I \wedge dx^J, $$
and likewise $\beta \wedge \alpha = (-1)^{k\ell} \alpha \wedge \beta$. 
Note that the wedge product of a 0-form (aka function) with a $k$-form is 
just ordinary multiplication. 

\section{Derivatives of forms}

If $\alpha = \sum_I \alpha_I dx^I$ is a $k$-form, then we define
the {\em exterior derivative}
$$ d\alpha = \sum_{I,j} \frac{\partial \alpha_I(x)}{\partial x^j}
dx^j \wedge dx^I.$$
Note that $j$ is a single index, not a multi-index. For instance, on $\R^2$, 
if $\alpha = xy dx + e^x dy$, then 
\begin{eqnarray} d \alpha & = & y dx \wedge dx + x dy \wedge dx 
+ e^x dx \wedge dy + 0 dy \wedge dy \cr 
& = & (e^x-x) dx \wedge dy.
\end{eqnarray}
If $f$ is a 0-form, then we have something even simpler:
$$ df(x) = \sum \frac{\partial f(x)}{\partial x^j} dx^j,$$
which should look familiar, if only as an imprecise calculus formula. 
One of our goals is to make such statements precise and rigorous. 
Also, remember that $x^i$ is actually a function on $\R^n$. Since 
$\partial_j x^i = 1$ if $i=j$ and 0 otherwise, $d(x^i) = dx^i$, which suggests that
our formalism isn't totally nuts. 

The key properties of the exterior derivative operator $d$ are listed in
the following
\begin{thm}
\begin{enumerate}
\item
If $\alpha$ is a $k$-form and $\beta$ is an $\ell$-form, then 
$$ d(\alpha \wedge \beta) = (d \alpha) \wedge \beta + (-1)^k \alpha \wedge
(d \beta).$$
\item $d(d\alpha)=0$. (We abbreviate this by writing $d^2=0$.) 
\end{enumerate}
\end{thm}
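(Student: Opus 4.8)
The plan is to prove both statements by reducing to monomials using linearity and then doing a direct computation, the two essential inputs being the antisymmetry of the wedge product and the equality of mixed partial derivatives.

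For part (2), I would start from the definition: if $\alpha = \sum_I \alpha_I\, dx^I$, then $d\alpha = \sum_{I,j} (\partial_j \alpha_I)\, dx^j \wedge dx^I$, and applying $d$ again gives $d(d\alpha) = \sum_{I,j,k} (\partial_k\partial_j \alpha_I)\, dx^k \wedge dx^j \wedge dx^I$. The crucial observation is that the coefficient $\partial_k\partial_j \alpha_I$ is symmetric under interchange of $j$ and $k$ (assuming each $\alpha_I$ is $C^2$, so that Clairaut's theorem on mixed partials applies), whereas $dx^k \wedge dx^j = -\,dx^j \wedge dx^k$ is antisymmetric. Pairing up the $(j,k)$ term with the $(k,j)$ term — and noting that the diagonal terms $j=k$ already vanish since $dx^j \wedge dx^j = 0$ — every pair cancels, so $d(d\alpha) = 0$.

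For part (1), since $d$ is additive and $\wedge$ is bilinear, while the factor $(-1)^k$ depends only on the (fixed) degree of $\alpha$, it suffices to verify the identity when $\alpha = f\, dx^I$ with $|I| = k$ and $\beta = g\, dx^J$ with $|J| = \ell$. Then $\alpha \wedge \beta = fg\, dx^I \wedge dx^J$, and the ordinary product rule gives $d(\alpha \wedge \beta) = \sum_m \big((\partial_m f)\,g + f\,(\partial_m g)\big)\, dx^m \wedge dx^I \wedge dx^J$. The first batch of terms reassembles immediately into $(d\alpha)\wedge\beta$, using $d(f\,dx^I) = \sum_m (\partial_m f)\, dx^m \wedge dx^I$. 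For the second batch, I would commute each $dx^m$ (a $1$-form) past $dx^I$ (a $k$-form), which by the sign rule $dx^J \wedge dx^I = (-1)^{k\ell} dx^I \wedge dx^J$ established earlier (here with one of the degrees equal to $1$) costs a factor $(-1)^k$; this turns $f\,(\partial_m g)\, dx^m \wedge dx^I \wedge dx^J$ into $(-1)^k f\, dx^I \wedge (\partial_m g)\, dx^m \wedge dx^J$, and summing over $m$ yields exactly $(-1)^k \alpha \wedge (d\beta)$. Adding the two batches gives the claimed formula.

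I do not expect a genuine obstacle — both parts are bookkeeping — but the step most prone to error is the sign tracking in part (1): one must check that the exponent is the degree $k$ of $\alpha$ (not of $\beta$, nor $k\ell$), which is precisely because a single $1$-form $dx^m$ is moved across the $k$-fold product $dx^I$. The one hypothesis worth stating explicitly is that the coefficient functions be $C^2$ (or at least $C^2$ on the region in question), as this is what licenses the symmetry of mixed partials underlying part (2).
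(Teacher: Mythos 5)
Your proof is correct. Part (1) is essentially identical to the paper's argument: reduce to monomials $\alpha = f\,dx^I$, $\beta = g\,dx^J$, apply the product rule, and move $dx^m$ past the $k$-fold product $dx^I$ at a cost of $(-1)^k$; your sign bookkeeping matches the paper's exactly. Part (2) reaches the same conclusion by a mildly different route. The paper first proves $d(df)=0$ for $0$-forms via the mixed-partials cancellation, and then handles general $k$-forms by bootstrapping: writing $d\alpha = d\alpha_I \wedge dx^I$ and applying part (1) together with the observation that $d(dx^I)=0$. You instead compute $d(d\alpha) = \sum_{I,j,k} (\partial_k\partial_j\alpha_I)\, dx^k\wedge dx^j\wedge dx^I$ directly and invoke the symmetric-coefficient-against-antisymmetric-wedge cancellation in full generality. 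Your version treats all degrees uniformly and makes the two parts of the theorem logically independent; the paper's version isolates the analytic input (Clairaut) in the simplest case and lets the algebra of the Leibniz rule do the rest. Both are valid, and your explicit remark that the coefficients should be $C^2$ is a hypothesis the paper leaves implicit.
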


\begin{proof} For simplicity, we prove this for the case where
 $\alpha = \alpha_I dx^I$ and $\beta = \beta_J dx^J$ each have only a single
term. The general case then follows from linearity. 

The first property is essentially the product rule for 
derivatives. 
\begin{eqnarray} 
\alpha \wedge \beta & = & \alpha_I(x) \beta_J(x) dx^I \wedge dx^J \cr 
d(\alpha \wedge \beta) & = & \sum_j \partial_j(\alpha_I(x) \beta_J(x))
dx^j \wedge dx^I \wedge dx^J \cr 
& = & \sum_j (\partial_j \alpha_I(x)) \beta_J(x) dx^j \wedge dx^I \wedge dx^J
\cr && + \sum_j \alpha_I(x) \partial_j \beta_J(x) dx^j \wedge dx^I \wedge dx^J
\cr & = & \sum_j (\partial_j \alpha_I(x)) dx^j \wedge dx^I \wedge 
\beta_J(x) dx^J
\cr && + (-1)^k \sum_j \alpha_I(x) dx^I \wedge \partial_j \beta_J(x) dx^j \wedge
dx^J
\cr &=& (d\alpha) \wedge \beta + (-1)^k \alpha \wedge d\beta.
\end{eqnarray}

The second property for 0-forms (aka functions) is just ``mixed partials are
equal'': 
\begin{eqnarray} d(df) & = &  d (\sum_i \partial_if dx^i) \cr 
& = & \sum_j\sum_i \partial_j\partial_i f dx^j \wedge dx^i\cr 
& = & - \sum_{i,j} \partial_i\partial_j f dx^i \wedge dx^j \cr  
& = & - d(df) = 0,
\end{eqnarray}
where in the third line we used 
$\partial_j\partial_i f = \partial_i \partial_j f$ and 
$dx^i \wedge dx^j = - dx^j \wedge dx^i$. We then use the first property, and
the (obvious) fact that $d(dx^I)=0$, to extend this to $k$-forms:
\begin{eqnarray}
d(d\alpha) & = & d (d\alpha_I \wedge dx^I) \cr 
& = & (d (d \alpha_I))\wedge dx^I - d\alpha_I \wedge d(dx^I) \cr 
& = & 0 - 0 = 0.
\end{eqnarray}
where in the second line we used the fact that $d\alpha_I$ is a 1-form,
and in the third line used the fact that $d(d\alpha_I)$ is $d^2$ applied to
a function, while $d(dx^I)=0$.
\end{proof}

\smallskip

\noindent {\bf Exercise 1:} On $\R^3$, there are interesting 1-forms 
and 2-forms associated with each vector field 
$\vec v(x) = (v_1(x), v_2(x), v_3(x))$. 
(Here $v_i$ is a component of the vector $\vec v$, not a vector in its
own right.) Let $\omega^1_{\vec v} = v_1dx + v_2dy + v_3dz$, and let 
$\omega^2_{\vec v}=
v_1dy \wedge dz + v_2dz \wedge dx + v_3dx \wedge dy$.
Let f be a function. Show that (a) $df = \omega^1_{\nabla f}$, 
(b) $d\omega^1_{\vec v}  = \omega^2_{\nabla \times \vec v}$, and (c) $d \omega^2_{\vec v} =
(\nabla \cdot \vec v)\, dx\wedge dy \wedge dz$, 
where $\nabla$, $\nabla \times$ and $\nabla \cdot$ 
are the usual gradient, curl, and divergence
operations.

\smallskip

\noindent {\bf Exercise 2:} A form $\omega$ is called {\em closed} 
if $d\omega = 0$, 
and {\em exact} if $\omega = d \nu$ for some other form $\nu$.
Since $d^2 = 0$, all exact forms are closed. On $\R^n$ it happens that
all closed forms of nonzero degree are exact. 
(This is called the Poincare Lemma). However, on
subsets of $\R^n$ the Poincare Lemma does not necessarily hold.  
On $\R^2$ minus the origin, 
show that $\omega = (xdy - ydx)/(x^2+y^2)$
is closed. We will soon see that $\omega$ is not exact.

\section{Pullbacks}

Suppose that $g: X \to Y$ is a smooth map, where $X$ is an open subset
of $\R^n$ and $Y$ is an open subset of $\R^m$, and that $\alpha$ is a
$k$-form on $Y$. We want to define a {\em pullback form} $g^* \alpha$
on $X$.  Note that, as the name implies, the pullback operation
reverses the arrows!  While $g$ maps $X$ to $Y$, and $dg$ maps tangent
vectors on $X$ to tangent vectors on $Y$, $g^*$ maps forms on $Y$ to
forms on $X$.

\begin{thm}
  There is a unique linear map $g^*$ taking forms on $Y$ to forms on
  $X$ such that the following properties hold:
\begin{enumerate}
\item If $f: Y \to \R$ is a function on $Y$, then $g^*f = f \circ g$. 
\item If $\alpha$ and $\beta$ are forms on $Y$, then 
$g^*(\alpha \wedge \beta)=(g^* \alpha) \wedge (g^* \beta).$
\item If $\alpha$ is a form on $Y$, then $g^*(d\alpha) = d(g^*(\alpha))$. (Note that there are really two
different $d$'s in this equation. On the left hand side $d$ maps $k$-forms on $Y$ to $(k+1)$-forms on $Y$. 
On the right hand side, $d$ maps $k$ forms on $X$ to $(k+1)$-forms on $X$. )
\end{enumerate}
\end{thm}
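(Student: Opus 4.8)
The plan is the standard two-step argument. First I would show that properties (1)--(3) \emph{force} an explicit formula for $g^*$, which gives uniqueness; then I would take that formula as a \emph{definition} and check that it satisfies (1)--(3), which gives existence.

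\emph{Uniqueness.} Suppose $g^*$ exists. Every $k$-form on $Y$ is a sum $\alpha = \sum_I \alpha_I \, dy^I$ with $\alpha_I$ a function on $Y$ and $dy^I = dy^{i_1}\wedge\cdots\wedge dy^{i_k}$, so by linearity it is enough to evaluate $g^*$ on a single term $\alpha_I\,dy^I$. Writing $g^j$ for the $j$-th component of $g$, property (1) applied to the coordinate function $y^j$ gives $g^*(y^j) = g^j$; property (3) applied to the $0$-form $y^j$ then gives $g^*(dy^j) = d(g^*(y^j)) = dg^j$; and properties (1) and (2) now give $g^*(\alpha_I\,dy^I) = (\alpha_I\circ g)\, dg^{i_1}\wedge\cdots\wedge dg^{i_k}$. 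Thus $g^*$ is completely determined.

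\emph{Existence.} Define $g^*\alpha = \sum_I (\alpha_I\circ g)\, dg^{i_1}\wedge\cdots\wedge dg^{i_k}$, extended by linearity; this is well defined because the assignment respects the antisymmetry relations among the $dy$'s (a repeated index gives $0$ on both sides, and transposing two adjacent factors changes the sign on both sides). Property (1) is immediate. For property (2) it suffices, by linearity, to check single terms $\alpha = \alpha_I\,dy^I$ and $\beta = \beta_J\,dy^J$; since $(\alpha_I\beta_J)\circ g = (\alpha_I\circ g)(\beta_J\circ g)$, both $g^*(\alpha\wedge\beta)$ and $(g^*\alpha)\wedge(g^*\beta)$ reduce to the same expression, the only subtlety being that when $I$ and $J$ overlap --- so the left side vanishes --- the right side vanishes too because $dg^i\wedge dg^i = 0$ for each $i$.

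Property (3) is the heart of the matter, and I expect it to be the main obstacle; the one genuinely new ingredient is the chain rule. I would first check it on $0$-forms: for a function $f$ on $Y$, using $df = \sum_i \partial_i f\, dy^i$, the definition of $g^*$, and the chain rule $\partial_j(f\circ g) = \sum_i(\partial_i f\circ g)\,\partial_j g^i$, one gets $g^*(df) = \sum_j \partial_j(f\circ g)\, dx^j = d(g^*f)$. Next I would record the lemma $d\big(g^*(dy^I)\big) = 0$: here $g^*(dy^I) = dg^{i_1}\wedge\cdots\wedge dg^{i_k}$, each factor $dg^{i_\ell} = d(g^{i_\ell})$ is closed since $d^2 = 0$, and the Leibniz rule from the earlier theorem on $d$ propagates closedness through the wedge product by a short induction. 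Finally, for a single term $\alpha = \alpha_I\,dy^I = \alpha_I\wedge dy^I$, the identity $d(dy^I) = 0$ gives $d\alpha = d\alpha_I\wedge dy^I$, and then
$$g^*(d\alpha) = g^*(d\alpha_I)\wedge g^*(dy^I) = d(\alpha_I\circ g)\wedge g^*(dy^I),$$
using property (2) and the $0$-form case, while the Leibniz rule together with the lemma gives
$$d(g^*\alpha) = d\big((\alpha_I\circ g)\wedge g^*(dy^I)\big) = d(\alpha_I\circ g)\wedge g^*(dy^I).$$
These agree, and linearity completes the argument. The entire difficulty is thus concentrated in the $0$-form case of property (3), i.e.\ the chain rule, plus the purely formal facts $d^2 = 0$ and the Leibniz rule; the only bookkeeping point to watch is that property (2) must be established before property (3) in the existence half.
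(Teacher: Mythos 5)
Your proof is correct and takes essentially the same route as the paper: use properties (1)--(3) to force the explicit formula $g^*\alpha = \sum_I (\alpha_I\circ g)\,dg^{i_1}\wedge\cdots\wedge dg^{i_k}$, giving uniqueness, and then verify the three properties for that formula to get existence. The paper in fact leaves the verification of $g^*(d\alpha)=d(g^*\alpha)$ as an exercise in definition-chasing, and your treatment of it (chain rule for $0$-forms, closedness of $g^*(dy^I)$, then the Leibniz rule) carries that exercise out correctly.
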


\begin{proof}
The pullback of 
$0$-forms is defined by the first property. However, note that on 
$Y$, the form $dy^i$ is $d$ of the {\em function} $y^i$ (where we're using 
coordinates $\{y^i\}$ on $Y$ and reserving $x$'s for $X$). This means that 
$g^*(dy^i)(x) = d(y^i\circ g)(x) = dg^i(x)$, where $g^i(x)$ is the $i$-th 
component of $g(x)$. But that gives us our formula in general! If $\alpha =
\sum_I \alpha_I(y) dy^I$, then
\be\label{pullback-1} 
g^* \alpha(x) = \sum_I \alpha_I(g(x)) dg^{i_1}\wedge dg^{i_2} \wedge
\cdots \wedge dg^{i_k}.
\ee
Using the formula (\ref{pullback-1}), it's easy to see that 
$g^*(\alpha \wedge \beta) = g^*(\alpha) \wedge g^*(\beta)$.
Checking that $g^*(d\alpha) = d(g^* \alpha)$ in general is left as an
exercise in definition-chasing.
\end{proof}

\smallskip
\noindent{\bf Exercise 3:} Do that exercise!    
\smallskip
         
An extremely important special case is where $m=n=k$. The $n$-form $dy^1 \wedge \cdots \wedge dy^n$ is called 
the {\em volume form} on $\R^n$. 

\smallskip

\noindent {\bf Exercise 4:} Let $g$ is a smooth map from $\R^n$ to
$\R^n$, and let $\omega$ be the volume form on $\R^n$. Show that $g^*
\omega$, evaluated at a point $x$, is $\det(dg_x)$ times the volume
form evaluated at $x$.

\smallskip

\noindent {\bf Exercise 5:} An important property of pullbacks is that
they are {\em natural}. If $g: U \to V$ and $h: V \to W$, where $U$,
$V$, and $W$ are open subsets of Euclidean spaces of various
dimensions, then $h \circ g$ maps $U \to W$. Show that $(h \circ g)^*
= g^* \circ h^*$.

\smallskip

\noindent {\bf Exercise 6:} Let $U = (0,\infty) \times (0, 2\pi)$, and let $V$
be $\R^2$ minus the non-negative $x$ axis. We'll use coordinates $(r,\theta)$
for $U$ and $(x,y)$ for $V$. Let 
$g(r,\theta)= (r \cos(\theta), r \sin(\theta))$, and let $h = g^{-1}$. 
On $V$, let $\alpha = e^{-(x^2+y^2)} dx \wedge dy$. \newline
(a) Compute $g^*(x)$, $g^*(y)$, $g^*(dx)$, $g^*(dy)$, $g^*(dx \wedge dy)$ and 
$g^*\alpha$ (preferably in that order). \newline
(b) Now compute $h^*(r)$, $h^*(\theta)$, $h^*(dr)$ and $h^*(d\theta)$. 

The upshot of this exercise is that pullbacks are something that you have been
doing for a long time! Every time you do a change of coordinates in calculus,
you're actually doing a pullback. 

\section{Integration}

Let $\alpha$ be an $n$-form on $\R^n$, and suppose that $\alpha$ is
compactly supported. (Being compactly supported is overkill, but we're
assuming it to guarantee integrability and to allow manipulations like
Fubini's Theorem. Later on we'll soften the assumption using
partitions of unity.) Then there is only one multi-index that
contributes, namely $I=\{1,2,\ldots,n\}$, and $\alpha(x) = \alpha_I(x)
dx^1 \wedge \cdots \wedge dx^n$. We define \be \label{integral} 
\int_{\R^n} \alpha :=
\int_{\R^n} \alpha_I(x) |dx^1 \cdots dx^n|. \ee The left hand side is
the integral of a form that involves wedges of $dx^i$'s. The right
hand side is an ordinary Riemann integral, in which $|dx^1\cdots dx^n|$
is the usual volume measure (sometimes written $dV$ or $d^nx$). 
Note that the order of the variables in the wedge product,
$x^1$ through $x^n$, is implicitly using the standard orientation of
$\R^n$.  Likewise, we can define the integral of $\alpha$ over any
open subset $U$ of $\R^n$, as long as $\alpha$ restricted to $U$ is
compactly supported.

We have to be a little careful with the left-hand-side of (\ref{integral})
when $n=0$. In this case, $\R^n$ is a single point (with 
positive orientation), and $\alpha$ is just 
a number. We take $\int \alpha$ to be that number. 

\smallskip

\noindent {\bf Exercise 7:} Suppose $g$ is an orientation-preserving
diffeomorphism from an open subset $U$ of $\R^n$ to another open
subset $V$ (either or both of which may be all of $\R^n$). Let
$\alpha$ be a compactly supported $n$-form on $V$. Show that
$$\int_U g^* \alpha = \int_V \alpha.$$
How would this change if $g$ were orientation-reversing? [Hint: use the change-of-variables formula for multi-dimensional 
integrals. Where does the Jacobian come in?]

Now we see what's so great about differential forms! The way they
transform under change-of-coordinates is perfect for defining
integrals. Unfortunately, our development so far only allows us to
integrate $n$-forms over open subsets of $\R^n$. More generally, we'd
like to integrate $k$-forms over $k$-dimensional objects. But this
requires an additional level of abstraction, where we define forms on
manifolds.

Finally, we consider how to integrate something that isn't compactly supported.
If $\alpha$ is not compactly supported, we pick a partition 
of unity $\{\rho_i\}$ such that each $\rho_i$ is compactly supported, and 
define $\int \alpha = \sum \int \rho_i \alpha$. Having this sum be independent
of the choice of partition-of-unity is a question of absolute convergence. 
If $\int_{\R^n} |\alpha_I(x)| dx^1 \cdots dx^n$ converges as a Riemann integral,
then everything goes through. (The proof isn't hard, and is a good exercise
in understanding the definitions.)

\section{Differential forms on manifolds}

An $n$-manifold is a (Hausdorff) space that locally looks like $\R^n$.
We defined abstract smooth $n$-manifolds via structures on the coordinate
charts. If $\psi: U \to X$ is a parametrization of a neighborhood of 
$p \in X$, where $U$ is an open set in $\R^n$, then we associate functions
on $X$ near $p$ with functions on $U$ near $\psi^{-1}(p)$. We associate 
tangent vectors in $X$ with velocities of paths in $U$, or with derivations
of functions on $U$. Likewise, we associated differential forms on $X$ that
are supported in the coordinate neighborhood with differential forms on $U$. 

All of this has to be done ``mod identifications''.  If $\psi_{1,2}: U_{1,2}
\to X$ are parametrizations of the same neighborhood of $X$, then $p$ is 
associated with both $\psi_1^{-1}(p) \in U_1$ and $\psi_2^{-1}(p) \in U_2$.
More generally, if we have an atlas of parametrizations $\psi_i: U_i \to X$,
and if $g_{ij} = \psi_j^{-1} \circ \psi_i$ is the transition function from the
$\psi_i$ coordinates to the $\psi_j$ coordinates on their overlap, then we
constructed $X$ as an abstract manifold as 
\be X = \coprod U_i / \sim, \qquad x \in U_i \sim g_{ij}(x) \in U_j. \ee
We had a similar construction for tangent vectors, and we can do the same
for differential forms. 

Let $\Omega^k(U)$ denote the set of $k$-forms on a subset $U \in \R^n$, and
let $V$ be a coordinate neighborhood of $p$ in $X$. We define
\be \label{forms-on-X} 
\Omega^k(V) = \coprod \Omega^k(U_1)/\sim, \qquad \alpha \in \Omega^k(U_j)
\sim g_{ij}^*(\alpha) \in \Omega^k(U_i). \ee
Note the direction of the arrows. $g_{ij}$ maps $U_i$ to $U_j$, so 
the pullback $g_{ij}^*$ maps forms on $U_j$ to forms on $U_i$. Having defined
forms on neighborhoods, we stitch things together in the usual way. 
A form on $X$ is a collection of forms on
the coordinate neighborhoods of $X$ that agree on their overlaps. 

Let $\nu$ denote a form on $V$, as represented by a form $\alpha$ on $U_j$.
We then write $\alpha = \psi_j^*(\nu)$. As with the polar-cartesian exercise
above, writing a form in a particular set of coordinates is technically
pulling it back to the Euclidean space where those coordinates live. Note
that $\psi_i = \psi_j \circ g_{ij}$, and 
that $\psi_i^* = g_{ij}^* \circ \psi_j^*$, since the realization of $\nu$ in 
$U_i$ is (by equation (\ref{forms-on-X})) the pullback, by $g_{ij}$, 
of the realization of $\nu$ in $U_j$.

This also tells us how to do calculus with forms on manifolds. If 
$\mu$ and $\nu$ are forms on $X$, then
\begin{itemize}
\item The wedge product $\mu \wedge \nu$ is the form whose realization
on $U_i$ is $\psi_i^*(\mu) \wedge \psi_i^*(\nu)$. In other words,
$\psi_i^*(\mu \wedge \nu) = \psi_i^* \mu \wedge \psi_i^* \nu$. 
\item The exterior derivative $d\mu$ is the form whose realization on
$U_i$ is $d(\psi_i^*(\mu))$. In other words, $\psi_i^*(d\mu) = d(\psi_i^* \mu)$.
\end{itemize}

\smallskip

\noindent {\bf Exercise 8:} Show that $\mu \wedge \nu$ and $d\mu$ are 
well-defined.

\smallskip

Now suppose that we have a map $f: X \to Y$ of manifolds and that $\alpha$
is a form on $Y$. The pullback $f^*(\alpha)$ is defined via coordinate patches.
If $\phi: U\subset \R^n \to X$ and 
$\psi: V \subset \R^m \to Y$ are parametrizations of $X$ and $Y$, then there
is a map $h: U \to V$ such that $\psi(h(x)) = f(\phi(x))$. 
We define $f^*(\alpha)$ to be the form of $X$ whose realization in $U$ is 
$h^* \circ (\psi^* \alpha)$. In other words, 
\be \phi^*(f^* \alpha) = h^*(\psi^* \alpha). \ee

An important special case is where $X$ is a submanifold of $Y$ and $f$ is
the inclusion map.  Then $f^*$ is the restriction of $\alpha$ to $X$. When
working with manifolds in $\R^N$, we often write down formulas for $k$-forms
on $\R^N$, and then say ``consider this form on $X$''. E.g., one might say
``consider the 1-form $x dy - y dx$ on the unit circle in $\R^2$''. Strictly
speaking, this really should be ``consider the pullback to $S^1 \subset \R^2$ 
by inclusion of the 1-form $x dy - y dx$ on $\R^2$,'' but (almost) nobody
is {\em that} pedantic! 
  
\section{Integration on oriented manifolds}

Let $X$ be an oriented $k$-manifold, and let $\nu$ be a $k$-form on 
$X$ whose support is a compact subset of a single coordinate chart
$V=\psi_i(U_i)$, where $U_i$ is an open subset of $\R^k$. 
Since $X$ is oriented, we can require that $\psi_i$ be 
orientation-preserving. We then define
\be \int_X \nu = \int_{U_i} \psi_i^* \nu. \ee

\smallskip

\noindent{\bf Exercise 9:} Show that this definition does not depend on the
choice of coordinates. That is, if $\psi_{1,2}: U_{1,2} \to V$ are two sets
of coordinates for $V$, both orientation-preserving, that 
$\int_{U_1} \psi_1^* \nu = \int_{U_2} \psi_2^* \nu$. 

\smallskip

If a form is not supported in a single coordinate chart, we pick an open 
cover of $X$ consisting of coordinate neighborhoods, pick a partition-of-unity
subordinate to that cover, and define
$$ \int_X \nu = \sum \int_X \rho_i \nu. $$
We need a little bit of notation to specify when this makes sense. If 
$\alpha=\alpha_I(x) dx^1\wedge\cdots\wedge dx^k$ is a $k$-form on $\R^k$, 
let $|\alpha| = |\alpha_I(x)| dx^1\wedge\cdots\wedge dx^k$. 
We say that $\nu$ is {\em absolutely integrable} if each
$|\psi_i^* (\rho_i \nu)|$ is integrable over $U_i$, and if the sum of those
integrals converges. It's not hard to show that being absolutely integrable 
with respect to one set of coordinates and partition of unity implies 
absolute integrability with respect to arbitrary coordinates and partitions 
of unity. Those are the conditions under which $\int_X \nu$ unambiguously makes
sense. 

When $X$ is compact and $\nu$ is smooth, absolute integrability is
automatic. In practice, we rarely have to worry about integrability
when doing differential topology.

The upshot is that {\bf $k$-forms are meant to be integrated on $k$-manifolds}.
Sometimes these are stand-alone abstract $k$-manifolds, sometimes they are
$k$-dimensional submanifolds of larger manifolds, and sometimes they are 
concrete $k$-manifolds embedded in $\R^N$. 

Finally, a technical point. If $X$ is 0-dimensional, then we can't construct
orientation-preserving maps from $\R^0$ to the connected components of $X$. 
Instead, we just take $\int_X \alpha = \sum_{x \in X} \pm \alpha(x)$, where
the sign is the orientation of the point $x$. This follows the general 
principle that reversing the orientation of a manifold should flip the sign
of integrals over that manifold. 

\smallskip

\noindent {\bf Exercise 10:} Let $X= S^1 \subset \R^2$ be the unit circle,
oriented as the boundary of the unit disk. Compute 
$\int_X (x dy - y dx)$ by explicitly pulling this back to $\R$ with an 
orientation-preserving chart and integrating over $\R$. (Which is how you
learned to do line integrals way back in calculus.) [Note: don't worry
about using multiple charts and partitions of unity. Just use a single chart for
the unit circle minus a point.] 

\smallskip

\noindent {\bf Exercise 11:} Now do the same thing one dimension up. 
Let $Y= S^2 \subset \R^3$ be the unit sphere,
oriented as the boundary of the unit ball. Compute 
$\int_X (x dy \wedge dz + y dz \wedge dx + z dx \wedge dy)$ 
by explicitly pulling this back to a subset of $\R^2$ with an 
orientation-preserving chart and integrating over that subset of $\R^2$. 
As with the previous exercise, you can use a single coordinate patch that 
leaves out a set of measure zero, which doesn't contribute to the integral.
Strictly speaking this does {\em not} follow the rules listed above, 
but I'll show you how to clean it up in class. 

\chapter{Stokes' Theorem}

\section{Stokes' Theorem on Euclidean Space}

Let $X=H^n$, the half space in $\R^n$. Specifically, $X = \{ x \in \R^n | 
x_n \ge 0 \}$. Then $\partial X$, viewed as a set, is the standard 
embedding of $\R^{n-1}$ in $\R^n$. However, the orientation on $\partial X$
is {\em not} necessarily the standard orientation on $\R^{n-1}$. Rather, it 
is $(-1)^n$ times the standard orientation on $\R^{n-1}$, since it take $n-1$
flips to change $(n,e_1,e_2,\ldots,e_{n-1})=(-e_n,e_1,\ldots,e_{n-1})$ to 
$(e_1,\ldots,e_{n-1},-e_n)$, which is a negatively oriented basis for $\R^n$. 
(By the way, here's a mnemonic for remembering how to orient boundaries:
ONF = One Never Forgets = Outward Normal First.\footnote{Hat tip to Dan Freed}) 

\begin{thm}[Stokes' Theorem, Version 1] Let $\omega$ be any 
compactly-supported $(n-1)$-form on $X$. Then 
\be \label{Stokes-1} \int_X d \omega = \int_{\partial X} \omega. \ee
\end{thm}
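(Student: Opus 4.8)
The plan is to reduce everything to a one-variable computation using linearity and Fubini's theorem. First I would use linearity of both sides of (\ref{Stokes-1}) to assume that $\omega$ consists of a single term, $\omega = f(x)\, dx^1 \wedge \cdots \wedge \widehat{dx^i} \wedge \cdots \wedge dx^n$, where the hat denotes omission of the $i$-th factor and $f$ is smooth and compactly supported. Applying the definition of $d$, every term in $d\omega$ with a repeated $dx^j$ drops out, and sliding $dx^i$ into its slot costs a sign $(-1)^{i-1}$, so $d\omega = (-1)^{i-1}\,\partial_i f\; dx^1 \wedge \cdots \wedge dx^n$. Hence, by the definition (\ref{integral}) of the integral of a top form, $\int_X d\omega = (-1)^{i-1}\int_{H^n} \partial_i f \,|dx^1 \cdots dx^n|$, an honest Riemann integral, which I would evaluate by integrating in the $x^i$-direction first; compact support legitimizes both Fubini and the evaluation of the resulting one-dimensional integrals.

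Next I would split into the two cases $i < n$ and $i = n$. If $i < n$, the innermost integral is $\int_{-\infty}^{\infty} \partial_i f \, dx^i = 0$ by the fundamental theorem of calculus together with compact support, so $\int_X d\omega = 0$; on the other side, the inclusion $\partial X \hookrightarrow X$ sends the function $x^n$ to the constant $0$, hence (by the pullback formula (\ref{pullback-1})) pulls $dx^n$ back to $0$, and since $i<n$ the form $\omega$ still contains the factor $dx^n$, so its restriction to $\partial X$ vanishes and $\int_{\partial X}\omega = 0$ as well. If $i = n$, then $\omega = f\, dx^1 \wedge \cdots \wedge dx^{n-1}$, the $x^n$-integral runs over $[0,\infty)$, and $\int_0^{\infty} \partial_n f \, dx^n = -f(x^1,\dots,x^{n-1},0)$, which gives $\int_X d\omega = (-1)^{n}\int_{\R^{n-1}} f(x^1,\dots,x^{n-1},0)\,|dx^1 \cdots dx^{n-1}|$. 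The restriction of $\omega$ to $\partial X$ is $f(x^1,\dots,x^{n-1},0)\, dx^1 \wedge \cdots \wedge dx^{n-1}$, but $\partial X$ carries $(-1)^n$ times the standard orientation of $\R^{n-1}$, so $\int_{\partial X}\omega = (-1)^n \int_{\R^{n-1}} f(x^1,\dots,x^{n-1},0)\,|dx^1\cdots dx^{n-1}|$, matching the other side.

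The main obstacle, and the step that deserves the most care, is precisely the sign bookkeeping in the $i = n$ case: one must check that the factor $(-1)^{i-1} = (-1)^{n-1}$ from rearranging the wedge product, the minus sign from the lower limit $0$ in the fundamental theorem of calculus, and the $(-1)^n$ twist in the induced (Outward-Normal-First) boundary orientation all combine to give the same sign on both sides. Everything else is routine unwinding of the definitions of $d$, of the pullback by an inclusion, and of $\int_X$ and $\int_{\partial X}$; the hypothesis of compact support is used exactly three times: to guarantee the Riemann integrals converge, to justify Fubini, and to kill the boundary contributions at infinity (at $\pm\infty$ when $i<n$, and at $+\infty$ when $i=n$).
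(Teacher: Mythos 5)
Your proposal is correct and follows essentially the same route as the paper's proof: reduce by linearity to a single term $\omega_j\,dx^{I_j}$, compute $d\omega$ with its sign $(-1)^{j-1}$, and split into the cases $j<n$ (both sides vanish, by Fubini plus the fundamental theorem of calculus on one side and the vanishing of $dx^n$ on $\partial X$ on the other) and $j=n$ (where the $(-1)^{n-1}$ from the wedge rearrangement, the minus sign from the lower limit $0$, and the $(-1)^n$ boundary orientation all reconcile). The sign bookkeeping you flag as the delicate step is exactly the delicate step in the paper's argument, and you have it right.
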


\begin{proof} Let $I_j$ be the ordered subset of $\{1,\ldots,n\}$ in which
the element $j$ is deleted. Suppose that the $(n-1)$-form $\omega$ can 
be expressed as
$\omega(x) = \omega_j(x) dx^{I_j}$, where $\omega_j(x)$ is 
a compactly supported function. Then $d \omega = (-1)^{j-1} \partial_j \omega_j
dx^1 \wedge \cdots \wedge dx^n$. There are two cases to consider:

If $j<n$, then the restriction to $\partial X$ of $\omega$ is zero, since 
$dx^n=0$, so $\int_{\partial X} \omega = 0$. But then
$$\int_{H^n} \partial_j \omega_j
dx^1 \cdots dx^n = \int \left [ \int_{-\infty}^\infty \partial_j \omega_j(x) \, dx_j
\right ] dx^1\cdots dx^{j-1} dx^{j+1} \cdots dx^n$$
Since $\omega_j$ is compactly supported, the inner integral is zero by
the fundamental theorem of calculus. Both sides of (\ref{Stokes-1}) are then
zero, and the theorem holds. 

If $j=n$, then 
\begin{eqnarray}
\int_X d\omega &=& 
 \int_{H^n} (-1)^{n-1} \partial_n \omega_n(x) \, d^nx \cr 
&=& \int_{\R^{n-1}} 
\left [ \int_0^\infty (-1)^{n-1} \partial_n \omega_n(x_1,\ldots,x_n)\, dx^n \right ]
dx^1 \cdots dx^{n-1} \cr 
& = & \int_{\R^{n-1}} (-1)^{n} \omega_n(x_1,\ldots,x_{n-1},0) dx^1\cdots dx^{n-1} \cr
& = & \int_{\partial X} \omega.
\end{eqnarray}
Here we have used the fundamental theorem of calculus and the fact that 
$\omega_n$ is compactly supported to get $\int_0^\infty \partial_n \omega_n(x)dx^n
= - \omega_n(x^1,\ldots,x^{n-1},0)$. 

Of course, not every $(n-1)$-form can be written as $\omega_j dx^{I_j}$ with
$\omega_j$ compactly supported. However, every compactly-supported $(n-1)$-form
can be written as a finite {\em sum} of such terms, one for each
value of $j$. Since equation
(\ref{Stokes-1}) applies to each term in the sum, it also applies to the total. 
\end{proof}

The amazing thing about this proof is how easy it is! The only
analytic ingredients are Fubini's Theorem (which allows us to first
integrate over $x^j$ and then over the other variables) and the
1-dimensional Fundamental Theorem of Calculus. The hard work came earlier,
in developing the appropriate definitions of forms and integrals.

\section{Stokes' Theorem on Manifolds}

Having so far avoided all the geometry and topology of manifolds by working on 
Euclidean space, we now turn back to working on manifolds. Thanks to the 
properties of forms developed in the previous set of notes, everything will
carry over, giving us

\begin{thm}[Stokes' Theorem, Version 2] Let $X$ be a compact oriented
$n$-manifold-with-boundary, and let $\omega$ be an $(n-1)$-form on $X$. Then
\be \label{Stokes-2} \int_X d\omega = \int_{\partial X} \omega, \ee
where $\partial X$ is given the boundary orientation and where the 
right hand side is, strictly 
speaking, the integral of the pullback of $\omega$ to $\partial X$ 
by the inclusion map. 
\end{thm}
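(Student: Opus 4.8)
The plan is to reduce the manifold version to the Euclidean version (Stokes' Theorem, Version 1) by means of a partition of unity subordinate to an atlas of coordinate charts. The key point is that all the hard analytic work — the Fundamental Theorem of Calculus and Fubini — is already done in Version 1, and that the definitions of $d$ and $\int_X$ on manifolds were rigged precisely so that pulling back to charts commutes with everything in sight.

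First I would choose a finite atlas of orientation-preserving charts $\psi_i \colon U_i \to V_i \subseteq X$, where each $U_i$ is an open subset of $\R^n$ or of the half-space $H^n$ (the latter for charts meeting $\partial X$); finiteness comes from compactness of $X$. Then I would pick a smooth partition of unity $\{\rho_i\}$ subordinate to the cover $\{V_i\}$, so that $\sum_i \rho_i \equiv 1$ on $X$ and each $\rho_i \omega$ is supported in a single chart $V_i$. Writing $\omega = \sum_i \rho_i \omega$ and using linearity of both $d$ and integration, it suffices to prove $\int_X d(\rho_i \omega) = \int_{\partial X} \rho_i \omega$ for each $i$ separately.

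Next, for a fixed $i$, I would transport everything to $U_i$ via $\psi_i$. Using the defining properties of forms on manifolds, $\psi_i^*(d(\rho_i\omega)) = d(\psi_i^*(\rho_i\omega))$, so $\int_X d(\rho_i\omega) = \int_{U_i} d(\psi_i^*(\rho_i\omega))$ by definition of integration on $X$ (here orientation-preservation of $\psi_i$ is what makes this definition legitimate). The pulled-back form $\psi_i^*(\rho_i\omega)$ is a compactly supported $(n-1)$-form on $U_i \subseteq H^n$, so Version 1 applies and gives $\int_{U_i} d(\psi_i^*(\rho_i\omega)) = \int_{\partial U_i} \psi_i^*(\rho_i\omega)$. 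Finally, I would identify $\int_{\partial U_i} \psi_i^*(\rho_i\omega)$ with $\int_{\partial X} \rho_i\omega$: the restriction of $\psi_i$ to $\partial U_i$ is an orientation-preserving chart for $\partial X$ with its boundary orientation (this is exactly the $(-1)^n$ bookkeeping built into the ONF convention and Version 1's statement about $\partial H^n$), and pullback commutes with restriction to the boundary because inclusion-of-boundary commutes with $\psi_i$. For charts $V_i$ that do not meet $\partial X$, both sides are zero — the left because $\psi_i^*(\rho_i\omega)$ is compactly supported in the interior, so Version 1 gives zero, and the right trivially.

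The main obstacle — really the only subtle point — is the orientation accounting on the boundary: one must check that the boundary orientation on $\partial X$ corresponds, under each $\psi_i$, to the orientation on $\partial H^n = \partial U_i$ that is implicitly used in Version 1, so that no stray sign appears when summing over $i$. This is precisely the content of the remark preceding Version 1 about $\partial H^n$ carrying $(-1)^n$ times the standard orientation of $\R^{n-1}$; once that convention is fixed consistently, the charts $\psi_i|_{\partial U_i}$ are orientation-preserving for $\partial X$ and the pieces fit together. A secondary bookkeeping matter is confirming that $\{\rho_i\omega\}$ and $\{\rho_i d\omega + d\rho_i \wedge \omega\}$ behave correctly under the sum — but since $\sum d\rho_i = d(\sum \rho_i) = d(1) = 0$, the cross terms cancel globally, and in any case this is subsumed by simply applying the already-proven identity to each $\rho_i\omega$ and adding.
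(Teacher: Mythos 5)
Your proposal is correct and follows essentially the same route as the paper's proof: a finite cover by orientation-preserving charts (using compactness), a subordinate partition of unity to reduce to forms supported in a single chart, pullback to $H^n$ where the commutation of $d$ with $\psi_i^*$ and the Euclidean Stokes' Theorem do the work, and then summation by linearity. Your additional remarks on the boundary-orientation bookkeeping and on charts missing $\partial X$ make explicit points the paper leaves implicit, but they do not change the argument.
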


\begin{proof} Using a partition-of-unity, we can write $\omega$ as a finite 
sum of forms $\omega_i$, each of which is compactly supported within a single
coordinate patch.  To spell that out, 
\begin{itemize}
\item Every point has a coordinate neighborhood. 
\item Since $X$ is compact, a finite number of such neighborhoods cover $X$. 
\item Pick a partition of unity $\{ \rho_i\}$ subordinate to this cover. 
\item Let $\omega_i = \rho_i \omega$. Since $\sum_i \rho_i=1$,  
$\omega=\sum_i \omega_i$. 
\end{itemize}

Now suppose that the support of $\omega_i$ is contained in the image of 
an orientation-preserving  parametrization $\psi_i: U_i \to X$, 
where $U_i$ is an open set in $H^n$. But then
\begin{eqnarray}
 \int_X d \omega_i &=& \int_{\psi(U_i)} d\omega_i \cr 
&=& 
\int_{U_i} \psi_i^*(d\omega_i) \cr 
& = & \int_{U_i} d(\psi^*\omega_i) \cr 
& = & \int_{H^{n}} d(\psi^*\omega_i) \cr
& = & \int_{\partial H^n} \psi^*\omega_i \cr 
& = & \int_{\partial X} \omega_i,
\end{eqnarray}
where we have used (a) the definition of integration of forms on manifolds,
(b) the fact that $d$ commutes with pullbacks, (c) the fact that 
$\psi^* \omega_i$ and 
$d(\psi^* \omega_i)$ can be extended by zero to all of $H^n$, (d)
Stokes' Theorem on $H^n$, and (e) the definition of integration of forms on
manifolds. Finally, we add everything up.  
\begin{eqnarray}
\int_X d \omega & = & \int_X d \sum_i \omega_i  \cr 
& = & \sum_i \int_X d\omega_i \cr 
& = & \sum_i \int_{\partial X} \omega_i \cr 
& = & \int_{\partial X} \sum_i \omega_i \cr & =&  \int_{\partial X}\omega.
\end{eqnarray}
\end{proof}

Note that $X$ being compact is {\em essential}. 
If $X$ isn't compact, then you can still prove Stokes' Theorem for forms that
are compactly supported, but not for forms in general. For instance, 
if $X=[0,\infty)$ and 
$\omega =1$ (a 0-form), then $\int_X d\omega=0$ but $\int_{\partial X} \omega=-1$.

\smallskip

To relate Stokes' Theorem for forms and manifolds to the classical theorems
of vector calculus, we need a correspondence between line integrals, surface
integrals, and integrals of differential forms. 

\noindent {\bf Exercise 1} If $\gamma$ is an oriented path in $\R^3$
and $\vec v(x)$ is a vector field, show that $\int_\gamma
\omega^1_{\vec v}$ is the line integral $\int \vec v \cdot T ds$,
where $T$ is the unit tangent to the curve and $ds$ is arclength
measure. (Note that this works for arbitrary smooth paths, and not
just for embeddings. It makes perfectly good sense to integrate around 
a figure-8.)

\smallskip

\noindent {\bf Exercise 2} If $S$ is an oriented surface in $\R^3$ and 
$\vec v$ is a vector field, show that $\int_S \omega^2_{\vec v}$ is the 
flux of $\vec v$ through $S$.

\smallskip
 
\noindent{\bf Exercise 3} Suppose that $X$ is a compact connected oriented 
1-manifold-with-boundary 
in $\R^n$. (In other words, a path without self-crossings 
from $a$ to $b$, where $a$ and $b$ might
be the same point.) Show that Stokes' Theorem, applied to $X$, is essentially
the Fundamental Theorem of Calculus. 

\smallskip

\noindent {\bf Exercise 4} Now suppose that $X$ is a bounded domain in $\R^2$. 
Write down Stokes' Theorem in this setting and relate it to the classical
Green's Theorem.

\smallskip

\noindent {\bf Exercise 5} Now suppose that $S$ is an oriented surface in
$\R^3$ with boundary curve $C=\partial S$. Let $\vec v$ be a vector field. 
Apply Stokes Theorem to $\omega^1_{\vec v}$ and to $S$, and express the result
in terms of line integrals and surface integrals. This should give you the 
classical Stokes' Theorem. 

\smallskip

\noindent {\bf Exercise 6} 
On $\R^3$, let $\omega = (x^2+y^2) dx \wedge dy + (x+ye^z) dy
\wedge dz + e^x dx \wedge dz.$ Compute $\int_S \omega$, where $S$ is the 
upper hemisphere of the unit sphere. The answer depends on which orientation
you pick for $S$ of course. Pick one, and compute! [Hint: Find an appropriate 
surface $S'$ so that $S-S'$ is the boundary of a 3-manifold. 
Then use Stokes' Theorem
to relate $\int_S \omega$ to $\int_{S'} \omega$.] 

\smallskip

\noindent {\bf Exercise 7} On $\R^2$ with the origin removed, let 
$\alpha = (x dy - y dx)/(x^2+y^2)$. You previously showed that $d\alpha=0$
(aka ``$\alpha$ is closed''). Show that $\alpha$ is not $d$ of any function
(``$\alpha$ is not exact'')

\smallskip

\noindent {\bf Exercise 8} On $\R^3$ with the origin removed, show that 
$\beta = (x dy \wedge dz - y dx \wedge dz + z dx \wedge dy)/(x^2+y^2+z^2)^{3/2}$
is closed but not exact. 

\smallskip

\noindent {\bf Exercise 9} Let $X$ be a compact oriented $n$-manifold (without
boundary), let $Y$
be a manifold, and let $\omega$ be a closed $n$-form on $Y$. 
Suppose that $f_0$ and $f_1$ are homotopic maps $X \to Y$. Show that $\int_X
f_0^*\omega = \int_X f_1^* \omega$. 

\smallskip

\noindent {\bf Exercise 10} Let $f: S^1 \to \R^2-\{0\}$ be a smooth map
whose winding number around the origin is $k$. Show that 
$\int_{S^1} f^* \alpha = 2\pi k$, where $\alpha$ is the form of Exercise 7.

\chapter{Tensors}

\section{What is a tensor?}

Let $V$ be a finite-dimensional vector 
space.\footnote{Or even an infinite-dimensional vector space, if you apply 
appropriate regularity conditions.}
 It could be $\R^n$, it 
could be the tangent
space to a manifold at a point, or it could just be an abstract vector
space. A {\em $k$-tensor} is a map 
$$T: V \times \cdots \times V \to \R$$
(where there are $k$ factors of $V$) that is linear in each factor.\footnote{Strictly speaking, 
this is what is called a {\em contravariant} tensor. There are also
covariant tensors and tensors of mixed type, all of which play a role in differential geometry. 
But for understanding forms, we only need contravariant tensors.}
That is, for fixed $\vec v_2,\ldots, \vec v_k$, 
$T(\vec v_1,\vec v_2,\ldots, \vec v_{k-1}, \vec v_k)$ is a linear function of $\vec v_1$, and for 
fixed $\vec v_1,\vec v_3,\ldots, \vec v_k$, $T(\vec v_1,\ldots, \vec v_k)$ is a linear function of 
$\vec v_2$, and so on. The space of $k$-tensors on $V$ is denoted $\T^k(V^*)$.

\noindent{\bf Examples:}
\begin{itemize}
\item If $V=\R^n$, then the inner product $P(\vec v,\vec w) = \vec v
  \cdot \vec w$ is a 2-tensor. For fixed $\vec v$ it's linear in $\vec
  w$, and for fixed $\vec w$ it's linear in $\vec v$.
\item If $V=\R^n$, $D(\vec v_1,\ldots,\vec v_n) = \det \begin{pmatrix}
\vec v_1 & \cdots & \vec v_n \end{pmatrix}$ is an $n$-tensor. 
\item If $V=\R^n$, $Three(\vec v)=$ ``the 3rd entry of $\vec v$'' 
is a 1-tensor. 
\item A 0-tensor is just a number. It requires no inputs at all to generate an output. 
\end{itemize}
Note that the definition of tensor says {\em nothing} about how things
behave when you rotate vectors or permute their order. The inner product 
$P$ stays the same when you swap the two vectors, but the determinant 
$D$ changes sign when you swap two vectors. Both are tensors.
For a 1-tensor like $Three$, permuting the order of 
entries doesn't even make sense! 

Let $\{\vec b_1,\ldots, \vec b_n\}$ be a basis for $V$. Every 
vector $\vec v \in V$
can be uniquely expressed as a linear combination:
$$ \vec v = \sum_i v^i \vec b_i,$$
where each $v^i$ is a number. Let $\phi^i(\vec v) = v^i$. The map $\phi^i$
is manifestly linear (taking $\vec b_i$ to 1 and all the other basis vectors to
zero), and so is a 1-tensor. In fact, the $\phi^i$'s form a basis for 
the space of 1-tensors. If $\alpha$ is any 1-tensor, then
\begin{eqnarray}
\alpha(\vec v) & =  \alpha(\sum_i v^i \vec b_i) & \cr 
& =  \sum_i v^i \alpha(\vec b_i) & \hbox{by linearity}\cr 
& =  \sum_i \alpha(\vec b_i) \phi^i(\vec v) & \hbox{since } 
v^i=\phi^i(\vec v) \cr 
& =  ( \sum_i \alpha(\vec b_i) \phi^i) (\vec v)& \hbox{by linearity, so} \cr 
\alpha & =  \sum_i \alpha(\vec b_i) \phi^i.&
\end{eqnarray}

A bit of terminology: The space of 1-tensors is called the {\em dual space}
of $V$ and is often denoted $V^*$. The basis $\{\phi^i\}$ for $V^*$ is 
called the {\em dual basis} of $\{b_j\}$.  Note that 
$$\phi^i(\vec b_j) = \delta^i_j := \begin{cases} 1 & i=j \cr 0 & i \ne j \end{cases},$$
and that there is a duality between vectors and 1-tensors (also called
co-vectors).
\begin{eqnarray}
\vec v = \sum v^i \vec b_i && \hbox{ where } v^i=\phi^i(\vec v) \cr 
\alpha = \sum \alpha_j \phi^j && \hbox{ where } \alpha_j = \alpha(\vec b_j) \cr 
\alpha(\vec v) = \sum \alpha_i v^i.
\end{eqnarray}
It is sometimes convenient to express vectors as columns and
co-vectors as rows. The basis vector $\vec b_i$ is represented by a
column with a 1 in the $i$-th slot and 0's everywhere else, while
$\phi^j$ is represented by a row with a 1 in the $j$th slot and the
rest zeroes. Unfortunately, representing tensors of order greater than
2 visually is difficult, and even 2-tensors aren't properly
described by matrices. To handle 2-tensors or higher, you really
need indices.

If $\alpha$ is a $k$-tensor and $\beta$ is an $\ell$-tensor, then we
can combine them to form a $k+\ell$ tensor that we denote $\alpha
\otimes \beta$ and call the {\em tensor product} of $\alpha$ and
$\beta$:
$$ (\alpha \otimes \beta) (\vec v_1,\ldots, \vec v_{k+\ell}) = \alpha(\vec v_1,\ldots,\vec v_k) \beta(\vec v_{k+1},\ldots, \vec v_{k+\ell}).$$
For instance, 
$$ (\phi^i\otimes \phi^j) (\vec v, \vec w) = \phi^i(\vec v)\, 
\phi^j(\vec w) = v^i w^j.$$
Not only are the $\phi^i \otimes \phi^j$'s 2-tensors, but they form a
basis for the space of 2-tensors. The proof is a generalization of the
description above for 1-tensors, and a specialization of the following
exercise.

\smallskip

\noindent{\bf Exercise 1:} For each ordered k-index
$I=\{i_1,\ldots,i_k\}$ (where each number can range from 1 to $n$),
let $\tilde \phi^I = \phi^{i_1} \otimes \phi^{i_2} \otimes \cdots \otimes
\phi^{i_k}$. Show that the $\tilde \phi^I$'s form a basis for $\T^k(V^*)$,
which thus has dimension $n^k$.  [Hint: If $\alpha$ is a
$k$-tensor, let $\alpha_I=\alpha(\vec b_{i_1},\ldots, \vec b_{i_k})$.
Show that $\alpha = \sum_I \alpha_I \tilde \phi^I$. This implies that the
$\tilde \phi^I$'s span $\T^k(V^*)$.  Use a separate argument to show
that the $\tilde \phi^I$'s are linearly independent.]

\smallskip

Among the $k$-tensors, there are some that have special properties
when their inputs are permuted. For instance, the inner product is
symmetric, with $\vec v \cdot \vec w = \vec w \cdot \vec v$, while the
determinant is anti-symmetric under interchange of any two entries. We
can always decompose a tensor into pieces with distinct symmetry.

For instance, suppose that $\alpha$ is an arbitrary 2-tensor. Define
$$ \alpha_+(\vec v, \vec w) = \frac12 \left ( \alpha(\vec v, \vec w) + \alpha(\vec w, \vec v) \right ); 
\qquad  \alpha_-(\vec v, \vec w) = \frac12 \left ( \alpha(\vec v, \vec w) - \alpha(\vec w, \vec v) \right ).$$
Then $\alpha_+$ is symmetric, $\alpha_-$ is anti-symmetric, and 
$\alpha = \alpha_+ + \alpha_-$. 

\section{Alternating Tensors}

Our goal is to develop the theory of differential forms. But $k$-forms are
made for integrating over $k$-manifolds, and integration means measuring volume.
So the $k$-tensors of interest should behave qualitatively like the determinant
tensor on $\R^k$, which takes $k$ vectors in $\R^k$ and returns the (signed)
volume of the parallelpiped that they span. In particular, it should 
change sign whenever two arguments are interchanged. 

Let $S_k$ denote the group of permutations of $(1,\ldots,k)$. A typical
element will be denoted $\sigma = (\sigma_1, \ldots, \sigma_k)$. The {\em sign}
of $\sigma$ is +1 if $\sigma$ is an even permutation, i.e. the product of 
an even number of transpositions, and $-1$ if $\sigma$ is an odd permutation. 

We say that a $k$-tensor $\alpha$ is {\em alternating} if, for any 
$\sigma \in S_k$ and any (ordered) collection $\{\vec v_1, \ldots, \vec v_k\}$ 
of vectors in $V$, 
$$ \alpha(\vec v_{\sigma_1},\ldots, \vec v_{\sigma_k}) = 
\hbox{sign}(\sigma) \alpha(\vec v_1, \ldots, v_k).$$
The space of alternating $k$-tensors on $V$ is denoted $\Lambda^k(V^*)$.
Note that $\Lambda^1(V^*)=\T^1(V^*)=V^*$ and that $\Lambda^0(V^*)=\T^0(V^*)=\R$.

If $\alpha$ is an arbitrary $k$-tensor, we define
$$ Alt(\alpha) = \frac{1}{k!} \sum_{\sigma \in S_k} \hbox{sign}(\sigma)
\alpha \circ \sigma,$$
or more explicitly
$$ Alt(\alpha)(\vec v_1, \ldots, \vec v_k) = \frac{1}{k!} \sum_{\sigma \in S_k}
\hbox{sign}(\sigma) \alpha(\vec v_{\sigma_1},\ldots, \vec v_{\sigma_k}).$$

\smallskip
\noindent{\bf Exercise 2:} (in three parts) 
\begin{enumerate}
\item Show that $Alt(\alpha) \in \Lambda^k(V^*)$. 
\item Show that $Alt$, restricted to $\Lambda^k(V^*)$, is the identity.
Together with (1), this implies that $Alt$ is a projection from 
$\T^k(V^*)$ to $\Lambda^k(V^*)$.  
\item Suppose that $\alpha$ is a $k$-tensor with $Alt(\alpha)=0$ and that 
$\beta$ is an arbitrary $\ell$-tensor. Show that $Alt(\alpha \otimes \beta)=0$.
\end{enumerate}

\smallskip

Finally, we can define a product operation on alternating tensors. If
$\alpha \in \Lambda^k(V^*)$ and $\beta \in \Lambda^\ell(V^*)$, define
$$ \alpha \wedge \beta = C_{k,\ell} \, Alt(\alpha \otimes \beta),$$
where $C_{k,\ell}$ is an appropriate constant that depends only on $k$ 
and $\ell$.  

\smallskip

\noindent{\bf Exercise 3:} Suppose that $\alpha \in \Lambda^k(V^*)$ and 
$\beta \in \Lambda^\ell(V^*)$, and that 
$C_{k,\ell}=C_{\ell,k}$ Show that $\beta \wedge \alpha = (-1)^{k\ell}
\alpha \wedge \beta$. In other words, wedge products for alternating tensors
have the same symmetry properties as wedge products of forms.  

\smallskip

Unfortunately, there are two different conventions for what the constants
$C_{k,\ell}$ should be! 
\begin{enumerate} 
\item Most authors, including Spivak, use $C_{k,\ell} = 
\frac{(k+\ell)!}{k!\ell!} = { k+\ell \choose k}$. 
The advantage of this convention is that 
$\det = \phi^1 \wedge \cdots \wedge \phi^n$. The disadvantage of this convention
is that you have to keep track of a bunch of factorials when doing wedge 
products.  
\item Some authors, including Guillemin and Pollack, use $C_{k,\ell}=1$. 
This keeps that algebra of wedge products simple, but has the drawback that
$\phi^1\wedge\cdots\wedge \phi^n(\vec b_1,\ldots, \vec b_n) = 1/n!$ instead of 
1. The factorials then reappear in formulas for volume and integration.
\item My personal preference is to use $C_{k,\ell}= {k+\ell \choose k}$, and
that's what I'll do in the rest of these notes. So be careful when 
transcribing formulas from Guillemin and Pollack, since they may differ 
by some factorials! 
\end{enumerate}

\noindent{\bf Exercise 4:} Show that, for both conventions, 
$C_{k,\ell} C_{k+\ell,m}= C_{\ell,m}C_{k,\ell+m}$. 

\smallskip

\noindent{\bf Exercise 5:} Suppose that the constants $C_{k,\ell}$ are chosen
so that $C_{k,\ell} C_{k+\ell,m}= C_{\ell,m}C_{k,\ell+m}$, and suppose that 
$\alpha$, $\beta$ and $\gamma$ are in $\Lambda^k(V^*)$, $\Lambda^\ell(V^*)$ and 
$\Lambda^m(V^*)$, respectively. Show that 
$$(\alpha \wedge \beta) \wedge \gamma = \alpha \wedge (\beta \wedge \gamma).$$
[If you get stuck, look on page 156 of Guillemin and Pollack].

\noindent{\bf Exercise 6:} Using the convention $C_{k,\ell} = 
\frac{(k+\ell)!}{k!\ell!} = { k+\ell \choose k}$, show that 
$\phi^{i_1}\wedge\cdots\wedge \phi^{i_k} = k! Alt(\phi^{i_1}\otimes \cdots
\otimes \phi^{i_k})$. (If we had picked $C_{k,\ell}=1$ as in 
Guillemin and Pollack, we would have gotten the same formula, only without
the factor of $k!$.)
 
\smallskip

Let's take a step back and see what we've done.
\begin{itemize}
\item Starting with a vector space $V$ with basis $\{\vec b_i\}$, we 
created a vector space $V^*=\T^1(V^*)=\Lambda^1(V^*)$ with dual basis 
$\{\phi^j\}$.
\item We defined an associative product $\wedge$ with the property that
$\phi^j \wedge \phi^i = - \phi^i \wedge \phi^j$ and with no other relations. 
\item Since tensor products of the $\phi^j$'s span $\T^k(V^*)$, wedge
products of the $\phi^j$'s must span $\Lambda^k(V^*)$. In other words,
$\Lambda^k(V^*)$ is exactly the space that you get by taking formal 
products of the $\phi^i$'s, subject to the anti-symmetry rule.
\item That's {\em exactly} what we did with the formal symbols $dx^j$ to
create differential forms on $\R^n$. The only difference is that the 
coefficients of differential forms are functions rather than real numbers
(and that we have derivative and pullback operations on forms).
\item Carrying over our old results from wedges of $dx^i$'s, we conclude that 
$\Lambda^k(V^*)$ had dimension ${n \choose k} = \frac{n!}{k!(n-k)!}$
and basis $\phi^I := \phi^{i_1}\wedge \cdots \wedge \phi^{i_k}$, where
$I=\{i_1,\ldots,i_k\}$ is an arbitrary subset of $(1,\ldots,n)$ (with $k$
distinct elements) {\em placed in increasing order}. 
\item Note the difference between $\tilde \phi^I = \phi^{i_1}\otimes
\cdots \otimes \phi^{i_k}$ and $\phi^I = \phi^{i_1}
\wedge \cdots \wedge \phi^{i_k}$. The tensors $\tilde \phi^I$ form a basis
for $\T^k(V^*)$, while the tensors $\phi^I$ form a basis for 
$\Lambda^k(V^*)$. They are related by $\phi^I = k! Alt(\tilde \phi^I)$. 
\end{itemize}

\smallskip

\noindent{\bf Exercise 7:} Let $V=\R^3$ with the standard basis, and let $\pi:
\R^3 \to \R^2$, $\pi(x,y,z)=(x,y)$ be the projection onto the $x$-$y$ plane.
Let $\alpha(\vec v, \vec w)$ be the signed area of the parallelogram spanned
by $\pi(\vec v)$ and  $\pi(\vec w)$ in the $x$-$y$ plane. Similarly,
let $\beta$ and $\gamma$ be
be the signed areas of the projections of $\vec v$ and $\vec w$ in the
$x$-$z$ and $y$-$z$ planes, respectively. Express $\alpha$, $\beta$ and 
$\gamma$ as linear combinations of $\phi^i \wedge \phi^j$'s. [Hint: If you
get stuck, try doing the next two exercises and then come back to this one.]

\smallskip

\noindent {\bf Exercise 8:} Let $V$ be arbitrary.  Show that
$(\phi^{i_1}\wedge \cdots \wedge \phi^{i_k})(\vec b_{j_1},\ldots,\vec
b_{j_k})$ equals $+1$ if $(j_1,\ldots,j_k)$ is an even permutation of
$(i_1,\ldots,i_k)$, $-1$ if it is an odd permutation, and 0 if the two 
lists are not permutations of one another. 

\smallskip 

\noindent{\bf Exercise 9:} Let $\alpha$ be an arbitrary element of 
$\Lambda^k(V^*)$. For each subset $I=(i_1,\ldots,i_k)$ written in increasing
order, let $\alpha_I=\alpha(\vec b_{i_1},\ldots,\vec b_{i_k})$. Show that 
$\alpha = \sum_I \alpha_I \phi^I$. 

\smallskip

\noindent{\bf Exercise 10:} Now let $\alpha_1, \ldots, \alpha_k$ be an
arbitrary ordered list of covectors, and that $\vec v_1, \ldots, \vec v_k$
is an arbitrary ordered list of vectors. Show that 
$(\alpha_1 \wedge \cdots \wedge \alpha_k)(\vec v_1,\ldots,\vec v_k) = 
\det A$, where $A$ is the $k\times k$ matrix whose $i,j$ entry is 
$\alpha_i(\vec v_j)$.  

\section{Pullbacks}

Suppose that $L: V \to W$ is a linear transformation, and that 
$\alpha \in \T^k(W^*)$. We then define the {\em pullback tensor} $L^*\alpha$
by
\be (L^*\alpha)(\vec v_1, \ldots, \vec v_k) = 
\alpha(L(\vec v_1), L(\vec v_2), \ldots, L(\vec v_k)). \ee

This has some important properties. Pick bases $(\vec b_1, \ldots, \vec b_n)$
and $(\vec d_1,\ldots, \vec d_m)$ for $V$ and $W$, respectively, and let 
$\{\phi^j\}$ and $\{\psi^j\}$ be the corresponding dual bases for $V^*$ and 
$W^*$. Let $A$ be the matrix of the linear transformation $L$ relative to
the two bases. That is 
$$ L(\vec v)^j = \sum_j A_{ji} v^i.$$

\noindent{\bf Exercise 11:} Show that the matrix of $L^*: W^* \to V^*$,
relative to the bases $\{\psi^j\}$ and $\{\phi^j\}$, is $A^T$. [Hint:
to figure out the components of a covector, act on a basis vector]

\noindent {\bf Exercise 12:} If $\alpha$ is a $k$-tensor, and if 
$I=\{i_1,\ldots,i_k\}$, show that 
$$ (L^* \alpha)_I = \sum_{j_1,\ldots,j_k} A_{j_1,i_1} A_{j_2,i_2}\cdots
A_{j_k,i_k} \alpha_{(j_1,\ldots,j_k)}.
$$

\noindent {\bf Exercise 13:} Suppose that $\alpha$ is alternating. Show 
that $L^*\alpha$ is alternating. That is, $L^*$ restricted to $\Lambda^k(W^*)$
gives a map to $\Lambda^k(V^*)$. 

\noindent {\bf Exercise 14:} If $\alpha$ and $\beta$ are alternating tensors 
on $W$, show that $L^*(\alpha \wedge \beta) = (L^*\alpha) \wedge (L^* \beta)$.

\section{Cotangent bundles and forms}

We're finally ready to define forms on manifolds. Let $X$ be a $k$-manifold. 
An $\ell$-dimensional {\em vector bundle} over $X$ is a manifold $E$ 
together with a surjection
$\pi: E \to X$ such that 
\begin{enumerate}
\item The preimage $\pi^{-1}(p)$ of any point $p \in X$ is an $n$-dimensional
real vector space. This vector space is called the {\em fiber} over $p$. 
\item For every point $p \in X$ there is a neighborhood $U$ and 
a diffeomorphism $\phi_U: \pi^{-1}(U) \to U \times \R^n$, such that for
each $x \in U$, $\phi_U$ restricted to $\pi^{-1}(x)$ is a linear isomorphism
from $\pi^{-1}(x)$ to $x \times \R^n$ (where we think of $x \times \R^n$ as
the vector space $\R^n$ with an additional label $x$.)
\end{enumerate}
In practice, the isomorphism $\pi^{-1}(U) \to U \times \R^n$ is
usually accomplished by defining a basis $(\vec v_1(x), \ldots, \vec
v_n(x))$ for the fiber over $x$, such that each $\vec v_i$ is a smooth
map from $U$ to $\pi^{-1}(U)$.

Here are some examples of bundles:
\begin{itemize}
\item The tangent bundle $T(X)$. In this case $n=k$. If $\psi$ is a local
parametrization around a point $p$, 
then $d\psi$ applied to $e_1,\ldots,e_n$ give a basis for $T_pX$.
\item The trivial bundle $X \times V$, where $V$ is any $n$-dimensional
vector space. Here we can pick a constant basis for $V$. 
\item The normal bundle of $X$ in $Y$ (where $X$ is a submanifold of $Y$).
\item The {\em cotangent bundle} whose fiber over $x$ is the dual 
space of $T_x(X)$.
This is often denoted $T^*_x(X)$, and the entire bundle is denoted $T^*(X)$.
Given a smoothly varying basis for $T_x(X)$, we can take the dual basis
for $T_x^*(X)$. 
\item The $k$-th tensor power of $T^*(X)$, which we denote $\T^k(T^*(X))$,
i.e. the vector bundle whose fiber
over $x$ is $\T^k(T_x^*(X))$. 
\item The alternating $k$-tensors in $\T^k(T^*(X))$, which we denote
$\Lambda^k(T^*(X))$. 
\end{itemize}

Some key definitions:
\begin{itemize}

\item A {\em section} of a vector bundle $E \to X$ is a smooth map
$s: X \to E$ such that $\pi\circ s$ is the identity on $X$. In other words,
such that $s(x)$ is an element of the fiber over $x$ for every $x$. 

\item A {\em differential form} of degree $k$ is a section of 
$\Lambda^k(T^*(X))$. The (infinite-dimensional) space of $k$-forms on 
$X$ is denoted $\Omega^k(X)$.

\item If $f: X \to \R$ is a function, then $df_x: T_x(X) \to T_{f(x)}(\R)=\R$
is a covector at $X$. Thus every function $f$ defines a 1-form $df$. 

\item If $f: X \to Y$ is a smooth map of manifolds, then $df_x$ is a linear map
$T_x(X)\to T_{f(x)}(Y)$, and so induces a {\em pullback map}
$f^*: \Lambda^k(T_{f(x)}^*(Y)) \to \Lambda^k(T_x^*(X))$, and hence a linear map
(also denoted $f^*$) from $\Omega^k(Y)$ to $\Omega^k(X)$. 
\end{itemize}

\noindent{\bf Exercise 15:} If $f: X \to Y$ and $g: Y \to Z$ are smooth maps
of manifolds, then $g \circ f$ is a smooth map $X \to Z$. Show that 
$(g \circ f)^* = f^* \circ g^*$.  
 
\section{Reconciliation}

We have developed two different sets of definitions for forms, pullbacks,
and the $d$ operator. Our task in this section is to see how they're really
saying the same thing. 

\bigskip

\centerline{Old definitions:}

\smallskip

\begin{itemize}
\item A differential form on $\R^n$ is a formal sum $\sum \alpha_I(x) dx^I$,
where $\alpha_I(x)$ is an ordinary function and $dx^I$ is a product
$dx^{i_1}\wedge \cdots\wedge dx^{i_k}$ of meaningless symbols that
anti-commute.
\item The exterior derivative is 
$d\alpha = \sum_{I,j} (\partial_j \alpha_I(x)) dx^j \wedge dx^I$.
\item If $g: \R^n \to \R^m$, then the pullback operator $g$ is designed
to pull back functions, commute with $d$, and respect wedge products:
If $\alpha = \sum \alpha_I(y) dy^I$, then 
$$ g^*(\alpha)(x) = \sum_I \alpha_I(g(x)) dg^{i_1}\wedge\cdots\wedge dg^{i_k}.
$$
\item Forms on $n$-manifolds are defined via forms on $\R^n$ and local
coordinates and have no intrinsic meaning. 
\end{itemize}

\bigskip

\centerline{New definitions:}

\smallskip

\begin{itemize}
\item A differential form on $\R^n$ is a section of $\Lambda^k(T^*(\R^n))$. 
Its value at each point $x$ is an alternating tensor that takes $k$ 
tangent vectors at that point as inputs and outputs a number. 
\item The exterior derivative on functions is defined as the usual derivative
map $df: T(X) \to \R$. We have not yet defined it for higher-order forms.
\item If $g: X \to Y$, then the pullback map $\Omega^k(Y) \to
  \Omega^k(X)$ is induced by the derivative map $dg: T(X) \to T(Y)$.
\item Forms on manifolds do not require a separate definition from forms
on $\R^n$, since 
tangent spaces, dual spaces, and tensors on tangent spaces are already
well-defined.
\end{itemize}

\bigskip

Our strategy for reconciling these two sets of definitions is:
\begin{enumerate}
\item Show that forms on $\R^n$ are the same in both definitions. 
\item Extend the new definition of $d$ to cover all forms, and show that it
agrees with the old definition on Euclidean spaces.
\item Show that the new definition of pullback, restricted to
  Euclidean spaces, satisfies the same axioms as the old definition,
  and thus gives the same operation on maps between Euclidean spaces,
  and in particular for change-of-coordinate maps.
\item Show that the functional relations that were assumed when we extended
the old definitions to manifolds are already satisfied by the new definitions.
\item Conclude that the new definitions give a concrete realization of the 
old definitions. 
\end{enumerate}

On $\R^n$, the standard basis for the tangent space is $\{\vec e_1,
\ldots, \vec e_n\}$. Since $\partial x^j/\partial x^i = 
\delta_i^j$, $dx^j$ maps
$\vec e_j$ to 1 and maps all other $\vec e_i$'s to zero. Thus the
covectors $d_xx^1, \ldots, d_xx^n$ (meaning the derivatives of the
functions $x^1, \ldots, x^n$ at the point $x$) form a basis for
$T_x^*(\R^n)$ that is dual to $\{\vec e_1,\ldots,\vec e_n\}$. {\bf In
  other words, $\phi^i = dx^i$!!}  The meaningless symbols $dx^i$ of
the old definition are nothing more (or less) than the dual basis of
the new definition. A new-style form is a linear combination $\sum_I
\alpha_I \phi^I$ and an old-style form was a linear combination
$\sum_I \alpha_I dx^I$, so the two definitions are exactly 
the same on $\R^n$. This completes step 1.

Next we want to extend the (new) definition of $d$ to cover arbitrary forms.
We would like it to satisfy $d(\alpha \wedge \beta) = (d\alpha) \wedge \beta
+ (-1)^k \alpha \wedge d\beta$ and $d^2=0$, and that is enough. 
\begin{eqnarray}
d(dx^i)  =  d^2(x^i) & =& 0 \cr 
d(dx^i \wedge dx^j)  = d(dx^i)\wedge dx^j - dx^i\wedge d(dx^j) & =& 0, 
\hbox{and similarly} \cr 
d(dx^I) & = & 0 \hbox{ by induction on the degree of $I$.} \end{eqnarray}
This then forces us to take
\begin{eqnarray}
d (\sum_I \alpha_I dx^I)  &=& \sum_I (d \alpha_I) \wedge dx^I + \alpha_I d(dx^I) \cr & =& \sum_I (d\alpha_I) dx^I \cr 
& =& \sum_{I,j} (\partial_j \alpha_I) dx^j \wedge dx^I,  
\end{eqnarray}
which is exactly the same formula as before. 
Note that this construction also works to define $d$
uniquely on manifolds, as 
long as we can find functions $f^i$ on a neighborhood of a point $p$ 
such that the $df^i$'s span $T^*_p(X)$. But such functions are always available
via the local parametrization. If $\psi:U \to X$ is a local parametrization,
then we can just pick $f^i$ to be the $i$-th entry of $\psi^{-1}$. That is 
$f^i=x^i \circ \psi^{-1}$. This gives a formula for $d$ on $X$ that is equivalent
to ``convert to $\R^n$ using $\psi$, compute $d$ in $\R^n$, and then convert
back'', which was our old definition of $d$ on a manifold. 

We now check that the definitions of pullback are the same. Let 
$g: \R^n \to \R^m$.  
Under the new definition, $g^*(dy^i) (\vec v) = dy^i(dg(\vec v))$, which 
is the $i$th entry of $dg(\vec v)$, where we are using coordinates $\{y^i\}$
on $\R^m$. But that is the same as $dg^i(\vec v)$,
so $g^*(dy^i)=dg^i$. Since the pullback of a function $f$ is just the 
composition $f\circ g$,
and since $g^*(\alpha \wedge \beta)=(g^*(\alpha)) \wedge (g^*(\beta))$ (see
the last exercise in the ``pullbacks'' section), we must have
$$ g^*(\sum_I \alpha_I dy^I)(x) = \sum_I \alpha_I(g(x)) dg^{i_1}\wedge
\cdots \wedge dg^{i_k},$$
exactly as before.  This also shows that $g^*(d\alpha) = d(g^*\alpha)$, since
that identity is a consequence of the formula for $g^*$.

Next we consider forms on manifolds. Let $X$ be an $n$-manifold, let 
$\psi: U \to X$ be a parametrization, where $U$ is an open set in $\R^n$.
Suppose that $a \in U$, and let $p=\psi(a)$. 
The standard bases for $T_a(\R^n)$ and $T_a^*(\R^n)$ are $\{\vec e_1,
\ldots, \vec e_n\}$ and $\{dx^1,\ldots,dx^n\}$. 
Let $\vec b_i = dg_0(\vec e_i)$. The vectors $\{ \vec b_i\}$ form a basis
for $T_p(X)$. Let $\{ \phi^j\}$ be the dual basis. But then
\begin{eqnarray} \psi^*(\phi^j) (\vec e_i) & = & \phi^j (dg_a(\vec e_i))\cr
& = & \phi^j (\vec b_i) \cr 
& = & \delta^j_i \cr 
& = & dx^j(\vec e_i), \hbox{ so } \cr 
\psi^*(\phi^j) & = & dx^j.
\end{eqnarray}
Under the old definition, forms on $X$ were abstract objects that corresponded,
via pullback, to forms on $U$, such that changes of coordinates followed
the rules for pullbacks of maps $\R^n \to \R^n$. 
Under the new definition, $\psi^*$ {\em 
automatically} pulls a basis for $T_p^*(X)$ to a basis for $T_a^*(\R^n)$, 
and this extends to an isomorphism between forms on a neighborhood of $p$
and forms on a neighborhood of $a$. Furthermore, if $\psi_{1,2}$ are two
different parametrizations of the same neighborhood of $p$, and if 
$\psi_1 = \psi_2 \circ g_{12}$ (so that $g_{12}$ maps the $\psi_1$ coordinates
to the $\psi_2$ coordinates), then we automatically have $\psi_1^* = 
g_{12}^* \circ \psi_2^*$, thanks to Exercise 15.

Bottom line: It is perfectly legal to do forms the old way, treating
the $dx$'s as meaningless symbols that follow certain axioms, and treating
forms on manifolds purely via how they appear in various coordinate 
systems. However, sections of bundles of alternating tensors on $T(X)$ give
an intrinsic realization of the exact same algebra. 
The new definitions allow us to talk about what differential forms actually
{\em are}, and to develop a cleaner intuition on how forms behave. 
In particular, they give a very simple explanation of what integration over
manifolds really means. 

\chapter{Integration}

\section{The whole is the sum of the parts}

Before we go about making sense of integrating forms over manifolds, we 
need to understand what integrating functions over $\R^n$ actually means. 
When somebody writes
$$\int_0^3 e^x dx$$
or 
$$ \int_{\R^2} e^{-(x^2+y^2)} dx\, dy$$
or
$$ \int_R f(x) d^nx,$$
what is actually being computed? 

The simplest case is in $\R$. When we write $\int_a^b f(x) dx$, we have a 
quantity with density $f(x)$ spread out over the interval $[a,b]$. We imagine
breaking that interval into small sub-intervals $[x_0,x_1]$, $[x_1,x_2]$, up to
$[x_{N-1},x_N]$, where $a=x_0$ and $b=x_N$. We then have 
\begin{eqnarray}
\int_a^b f(x) dx & = & \hbox{Amount of stuff in $[a,b]$} \cr 
& = & \sum_{k=1}^N \hbox{Amount of stuff in $[x_{k-1},x_k]$} \cr 
& \approx & \sum_{k=1}^N f(x_k^*) \Delta_k x,
\end{eqnarray}
where $\Delta_k x = x_k-x_{k-1}$ is the length of the $k$th interval, and 
$x_k^*$ is an arbitrarily chosen point in the $k$th interval. As long as 
$f$ is continuous and each interval is small, all values of $f(x)$ in the 
$k$th interval are close to $f(x_k^*)$, so $f(x_k^*) \Delta_k x$ is a good
approximation to the amount of stuff in the $k$th interval. As 
$N \to \infty$ and the intervals are chosen smaller and smaller, the errors
go to zero, and we have 
$$\int_a^b f(x) dx = \lim_{N \to \infty} \sum_{k=1}^N f(x_k^*) \Delta_k x.$$
Note that I have not required that all of the intervals $[x_{k-1},x_k]$ be
the same size! While that's convenient, it's not actually necessary. 
All we need for convergence is for all of the sizes to go to zero
in the $N \to \infty$ limit. 

The same idea goes in higher dimensions, when we want to integrate 
any continuous bounded function over any 
bounded region. We break the region into tiny pieces,
estimate the contribution of each piece, and add up the contributions.
As the pieces are chosen smaller and smaller, the errors in our estimates
go to zero, and the limit of our sum is our exact integral. 

If we want to integrate an unbounded function, or integrate over an
unbounded region, we break things up into bounded pieces and add up
the integrals over the (infinitely many) pieces.  
A function is (absolutely) integrable if
the pieces add up to a finite sum, no matter how we slice up the
pieces. Calculus books sometimes distinguish between ``Type I'' improper 
integrals
like $\int_1^\infty x^{-3/2} dx$ and ``Type II'' improper integrals like
$\int_0^1 y^{-1/2} dy$, but they are really the same. Just apply the 
change of variables $y=1/x$:  
\begin{eqnarray}
\int_1^\infty x^{-3/2} dx & = & \sum_{k=1}^\infty \int_k^{k+1} x^{-3/2} dx \cr 
& = & \sum_{k=1}^\infty \int_{1/(k+1)}^{1/k} y^{-1/2} dy \cr 
&=& \int_0^1 y^{-1/2} dy.
\end{eqnarray}

When doing such a change of variables, the width of the intervals can change
drastically. $\Delta y$ is not $\Delta x$, and $x$-intervals of size 1 turn
into $y$-intervals of size $\frac{1}{k(k+1)}$. Likewise, the integrand is not
the same. However, the contribution of the interval, whether written as 
$x^{-3/2} \Delta x$ or $y^{-1/2}\Delta y$, {\em is} the same (at least in
the limit of small intervals). 

In other words, we need to stop thinking about $f(x)$ and $dx$ separately, 
and think instead of the combination $f(x) dx$, which is a machine for 
extracting the contribution of each small interval. 

But that's exactly what the differential form $f(x) dx$ is for! In one
dimension, the covector $dx$ just gives the value of a vector in
$\R^1$. If we evaluate $f(x) dx$ at a sample point $x_k^*$ and apply it to
the vector $x_k-x_{k-1}$, we get
$$ f(x) dx (\vec x_k-\vec x_{k-1}) = f(x_k^*) \Delta_k x.$$

\section{Integrals in 2 or More Dimensions}

Likewise, let's try to interpret the integral of $f(x,y) dx dy$ over
a rectangle $R=[a,b] \times [c,d]$ in $\R^2$. The usual approach is to 
break the interval $[a,b]$ into $N$ pieces and the interval $[c,d]$ into
$M$ pieces, and hence the rectangle $R$ into $NM$ little rectangles with
vertices at $(x_{i-1},y_{j-1})$, $(x_i,y_{j-1})$, $(x_{i-1},y_j)$ and $(x_i,y_j)$,
where $i=1,\ldots,N$ and $j=1,\ldots,M$. 

So what is the contribution of the $(i,j)$-th sub-rectangle $R_{ij}$?
We evaluate $f(x,y)$ at a sample point $(x_i^*,y_j^*)$ and multiply by
the area of $R_{ij}$. However, that area is exactly what you get from
applying $dx \wedge dy$ to the vectors $\vec v_1=(x_i-x_{i-1},0)$ and
$\vec v_2=(0,y_j-y_{j-1})$ that span the sides of the rectangle. In
other words, $f(x_i^*,y_j^*) \Delta_ix \Delta_j y$ is exactly what you
get when you apply the 2-form $f(x,y) dx\wedge dy$ to the vectors
$(\vec v_1,\vec v_2)$ at the point $(x_i^*,y_j^*)$. [Note that this
interpretation requires the normalization
$C_{k,\ell}=\frac{(k+\ell)!}{k!\ell!}$ for wedge products. If we had
used $C_{k,\ell}=1$, as in Guillemin and Pollack, then $dx \wedge
dy(\vec v_1, \vec v_2)$ would only be half the area of the rectangle.]

The same process works for integrals over any bounded domain $R$ in $\R^n$.
To compute $\int_R f(x) d^nx$:
\begin{enumerate}
\item Break $R$ into a large number of small pieces $\{ R_I\}$, which we'll
call ``boxes'', 
each of which is 
approximately a parallelpiped spanned by vectors $\vec v_1,\ldots, \vec v_n$,
where the vectors don't have to be the same for different pieces.
\item To get the contribution of a box $R_I$, pick a point $x_I^* \in R_I$,
evaluate the $n$-form $f(x) dx^1\wedge\cdots\wedge dx^n$ at $x_I^*$, and
apply it to the vectors $\vec v_1, \ldots, \vec v_k$. Your answer will
depend on the choice of $x_I^*$, but all choices will give approximately the
same answer. 
\item Add up the contributions of all of the different boxes. 
\item Take a limit as the sizes of the boxes go to zero uniformly. Integrability
means that this limit does not depend on the choices of the sample points 
$x_I^*$, or on the way that we defined the boxes. When $f$ is continuous and
bounded, this always works. When $f$ is unbounded or discontinuous, or when
$R$ is unbounded, work is required to show that the limit is well-defined.
\end{enumerate}

For instance, to integrate $e^{-(x^2+y^2)}dx dy$ over the unit disk, we need
to break the disk into pieces. One way is to use Cartesian coordinates, where
the boxes are rectangles aligned with the coordinate axes and of size 
$\Delta x \times \Delta y$. Another way is to use polar coordinates, where 
the boxes have $r$ and $\theta$ ranging over small intervals. 

\noindent {\bf Exercise 1:} Let $R_I$ be
a ``polar rectangle'' whose vertices $p_1$, $p_2$, $p_3$ and $p_4$ 
have polar coordinates $(r_0,\theta_0)$,
$(r_0+\Delta r, \theta_0)$, $(r_0, \theta_0+\Delta \theta)$ and 
$r_0+\Delta r, \theta_0+\Delta \theta)$, respectively, 
where we assume that $\Delta r$ is
much smaller than $r_0$ and that $\Delta \theta$ is small in absolute terms.
Let $\vec v_1$ be the vector from $p_1$ to $p_2$ and $\vec v_2$ is the vector
from $p_1$ to $p_3$. \newline
(a) Compute $dx \wedge dy (\vec v_1, \vec v_2)$. \newline
(b) If our sample point $x_I^*$ has polar coordinates $(r^*, \theta^*)$, 
evaluate the approximate contribution of this box. \newline
(c) Express the limit of the sum over all boxes as a double integral over
$r$ and $\theta$. \newline
(d) Evaluate this integral. 

\section{Integration Over Manifolds}

Now let $X$ be an oriented $n$-manifold (say, embedded in $\R^N$), 
and let $\alpha$ be an $n$-form. 
The integral $\int_X \alpha$ is the result of the following process. 

\begin{enumerate}
\item Break $X$ into a number of boxes $X_I$, 
where each box can be approximated as a parallelpiped containing a point 
$p_I^*$,
with the {\em oriented} collection of vectors 
$\vec v_1, \ldots, \vec v_n$ representing the edges. 
\item Evaluate $\alpha$ at $p_I^*$ and apply it to the vectors 
$\vec v_1, \ldots, \vec v_n$. 
\item Add up the contributions of all the boxes. 
\item Take a limit as the size of the boxes goes to zero uniformly. 
\end{enumerate}

In practice, Step 1 
is usually done via a parametrization $\psi$, and letting the box $X_I$ be
the image under $\psi$ of an actual $\Delta x_1 \times \cdots \times
\Delta x_n$ rectangle in $\R^n$, and setting
$\vec v_i=d\psi_a(\Delta x_i \vec e_i)$, where $p_I^*=\psi(a)$. Note that 
$p_I^*$ is not necessarily a vertex. It's just an arbitrary point in the
box. 

If the box is constructed in this way, then 
Step 2 is {\em exactly} the same as applying $\psi^*\alpha(a)$ to the vectors 
$\{\Delta x_i \vec e_i\}$. But that makes integrating $\alpha$ over $X$ 
the same as integrating 
$\psi^*\alpha$ over $\R^n$! This shows directly that different choices of 
coordinates give the same integrals, as long as the coordinate patches are 
oriented correctly. 

When a manifold consists of more than one coordinate patch, there are several
things we can do. One is to break $X$ into several large pieces, each within
a coordinate patch, and then break each large piece into small 
coordinate-based boxes, exactly as described above. 
Another is to use a partition of unity to write 
$\alpha = \sum \rho_i \alpha$ as a sum of pieces supported in a 
single coordinate chart, and then integrate each $\alpha_i$ separately. 

This allows for a number of natural constructions where forms are defined
intrinsically rather than via coordinates.

Let $X$ be an oriented $(n-1)$-manifold in $\R^n$, and let $\vec n(x)$
be the unit normal to $X$ at $x$ whose sign is chosen such that, for any
oriented basis $\vec v_1,\ldots,\vec v_{n-1}$ of $T_xX$, the basis  
$(\vec n, \vec v_1, \ldots, \vec v_{n-1})$ of $T_x\R^n$ is positively oriented. 
(E.g, if $X = \partial Y$, then $n$ is the normal pointing out from $Y$). 
Let $dV = dx^1\wedge \cdots \wedge dx^n$ be the volume form on $\R^n$. Define
a form $\omega$ on $X$ by 
$$\omega(\vec v_1,\ldots,\vec v_{n-1}) = 
dV(\vec n, \vec v_1,\ldots,\vec v_{n-1}).$$

\smallskip

\noindent{\bf Exercise 2:} Show that $\int_X \omega$ is the $(n-1)$-dimensional
volume of $X$. 

\smallskip

More generally, let $\alpha$ be any $k$-form on a manifold $X$, and let
$\vec w(x)$ be any vector field. We define a new $(k-1)$-form $i_w\alpha$
by 
$$ (i_w\alpha)(\vec v_1,\ldots,\vec v_{k-1})=\alpha(\vec w, \vec v_1,
\ldots,\vec v_{k-1}).$$

\smallskip

\noindent{\bf Exercise 3:} Let $S$ be a surface in $\R^3$ and let $\vec v(x)$ be
a vector field. Show {\em directly} that $\int_S i_v(dx\wedge dy\wedge dz)$
is the flux of $\vec v$ through $S$. That is, show that 
$i_v(dx \wedge dy \wedge dz)$ applied to a pair of (small) vectors gives
(approximately) the flux of $\vec v$ through a parallelogram spanned by those
vectors. 

\smallskip

\noindent{\bf Exercise 4:} In $\R^3$ we have already seen $i_v(dx \wedge dy
\wedge dz)$. What did we call it? 

\smallskip

\noindent{\bf Exercise 5:} Let $\vec v$ be any vector field in $\R^n$.
Compute $d(i_v(dx^1\wedge \cdots \wedge dx^n))$. 

\smallskip

\noindent{\bf Exercise 6:} Let $\alpha = \sum \alpha_I(x) dx^I$ be a 
$k$-form on $\R^n$ and let 
$\vec v(x)=\vec e_i$, the $i$-th standard basis vector for $\R^n$. 
Compute $d(i_v \alpha) + i_v(d\alpha)$. Generalize to the case where $\vec v$
is an arbitrary {\em constant} vector field. 

\smallskip

When $\vec v$ is not constant,
the expression $d(i_v \alpha) + i_v(d\alpha)$ is more complicated, and 
depends both on derivatives of $v$ and derivatives of $\alpha_I$, as we 
saw in the last two exercises. This quantity 
is called the {\em Lie derivative} of $\alpha$ with respect to $\vec v$.

It is certainly possible to feed more than one vector field to a 
$k$-form, thereby reducing its degree by more than 1. It immediately
follows that $i_vi_w = - i_wi_v$ as a map 
$\Omega^k(X) \to \Omega^{k-2}(X)$. 

\chapter{de Rham Cohomology}

\section{Closed and exact forms}

Let $X$ be a $n$-manifold (not necessarily oriented), and let $\alpha$ be a
$k$-form on $X$. We say that $\alpha$ is {\em closed} if $d\alpha=0$ and 
say that $\alpha$ is {\em exact} if $\alpha = d\beta$ for some 
$(k-1)$-form $\beta$. (When $k=0$, the 0 form is also considered exact.) Note that
\begin{itemize}
\item Every exact form is closed, since $d(d\beta)=d^2\beta=0$. 
\item A 0-form is closed if and only if it is locally constant, i.e.
constant on each connected component of $X$. 
\item Every $n$-form is closed, since then $d\alpha$ would be an $(n+1)$-form
on an $n$-dimensional manifold, and there are no nonzero $(n+1)$-forms. 
\end{itemize}

Since the exact $k$-forms are a subspace of the closed $k$-forms, we can
defined the quotient space 
$$ H^k_{dR}(X) = \frac{\hbox{Closed $k$-forms on $X$}}{\hbox{Exact $k$-forms
on $X$}}.$$
This quotient space is called the $k$th {\em de Rham cohomology of $X$}. 
Since this is the only kind of cohomology we're going to discuss in these
notes, I'll henceforth omit the prefix ``de Rham'' and the subscript ${}_{dR}$.  If $\alpha$ is a closed
form, we write $[\alpha]$ to denote the class of $\alpha$ in $H^k$, and say
that the {\em form} $\alpha$ represents the {\em cohomology class} $[\alpha]$. 

The wedge product of forms extends to a product operation 
$H^k(X) \times H^\ell(X) \to H^{k+\ell}(X)$. If $\alpha$ and $\beta$ are closed,
then
\begin{eqnarray*}
d(\alpha \wedge \beta) & = & (d\alpha)\wedge \beta + (-1)^k \alpha \wedge d\beta
\cr &=& 0 \wedge \beta \pm \alpha \wedge 0 = 0,
\end{eqnarray*}
so $\alpha \wedge \beta$ is closed. Thus $\alpha \wedge \beta$ represents a 
class in $H^{k+\ell}$, and we define 
$$ [\alpha] \wedge [\beta] = [\alpha \wedge \beta].$$
We must check that this is well-defined. I'm going to spell this out in gory
detail as an example of computations to come.

Suppose that $[\alpha']=[\alpha]$ and $[\beta']=[\beta]$. We must show that
$[\alpha' \wedge \beta']=[\alpha \wedge \beta]$.  However $[\alpha']=[\alpha]$
means that $\alpha'$ and $\alpha$ differ by an exact form, and similarly
for $\beta'$ and $\beta$:
\begin{eqnarray*}
\alpha' &=& \alpha + d\mu \cr 
\beta' &=& \beta + d\nu
\end{eqnarray*}
But then 
\begin{eqnarray*}
\alpha' \wedge \beta' & = & (\alpha + d\mu) \wedge (\beta + d\nu) \cr 
& = & \alpha \wedge \beta + (d\mu)\wedge \beta + \alpha \wedge d\nu
+ d\mu \wedge d\nu \cr 
&=& \alpha \wedge \beta + d(\mu \wedge \beta) + (-1)^k d(\alpha \wedge \nu)
+ d(\mu \wedge d\nu) \cr 
& = & \alpha \wedge \beta + \hbox{exact forms},
\end{eqnarray*}
where we have used the fact that $d(\mu\wedge \beta)=d\mu \wedge \beta
+ (-1)^{k-1}\mu \wedge d\beta = d\mu \wedge \beta$, and similar expansions 
for the other terms. That is, 
\begin{eqnarray*}
\hbox{(Exact)} \wedge \hbox{(Closed)} & = & \hbox{(Exact)} \cr 
\hbox{(Closed)} \wedge \hbox{(Exact)} & = & \hbox{(Exact)} \cr 
\hbox{(Exact)} \wedge \hbox{(Exact)} & = & \hbox{(Exact)}
\end{eqnarray*}
Thus $\alpha'\wedge \beta'$ and $\alpha\wedge \beta$ represent the same 
class in cohomology.    
Since $\beta \wedge \alpha = (-1)^{k\ell} \alpha \wedge \beta$, it also follows 
immediately that $[\beta]\wedge [\alpha] = (-1)^{k\ell} [\alpha]\wedge[\beta]$.
 
We close this section with a few examples. 
\begin{itemize}
\item If $X$ is a point, then $H^0(X)=\Omega^0(X)=\R$, and $H^k(X)=0$ for
all $k\ne 0$, since there are no nonzero forms in dimension greater than 1.
\item If $X=\R$, then $H^0(X)=\R$, since the closed 0-forms are the
constant functions, of which only the 0 function is exact. All 1-forms are
both closed and exact. If $\alpha = \alpha(x) dx$ is a 1-form, then 
$\alpha = df$, where $f(x) = \int_0^x \alpha(s) ds$ is the indefinite integral
of $\alpha(x)$.
\item If $X$ is {\em any} connected manifold, then $H^0(X)=\R$. 
\item If $X = S^1$ (say, embedded in $\R^2$), then $H^1(X)=\R$, and the 
isomorphism is obtained by integration: $[\alpha] \to \int_{S^1} \alpha$. 
If the form $\alpha$ is exact, then $\int_{S^1} \alpha = 0$. Conversely, if 
$\int_{S^1} \alpha=0$, then $f(x)=\int_a^x \alpha$ (for an arbitrary 
fixed starting point $a$) is well-defined and $\alpha=df$. 
\end{itemize}

\section{Pullbacks in Cohomology}

Suppose that $f: X \to Y$ and that $\alpha$ is a closed form on $Y$, 
representing a class in $H^k(Y)$. Then $f^*\alpha$ is also closed, since 
$$ d(f^*\alpha) = f^*(d\alpha) = f^*(0)=0,$$
so $f^*\alpha$ represents a class in $H^k(X)$. If $\alpha'$ also 
represents $[\alpha] \in H^k(Y)$, then we must have $\alpha'=\alpha+d\mu$, so
$$ f^*(\alpha') = f^*\alpha + f^*(d\mu) = f^*\alpha + d(f^*\mu)$$
represents the same class in $H^k(X)$ as $f^*\alpha$ does. We can therefore
define a map 
$$f^\sharp: H^k(Y) \to H^k(X), \qquad f^\sharp[\alpha] = [f^*\alpha].$$
We are using notation to distinguish between the pullback map $f^*$ on 
{\em forms} and the pullback map $f^\sharp$ on {\em cohomology}. Guillemin
and Pollack also follow this convention. However, most authors use 
$f^*$ to denote both maps, hoping that it is clear from context whether 
we are talking about forms or about the classes they represent.  (Still others
use $f^\sharp$ for the map on forms and $f^*$ for the map on cohomology.
Go figure.) 

Note that $f^\sharp$ is a {\em contravariant functor}, which is a fancy way
of saying that it reverses the direction of arrows. If $f: X \to Y$, then
$f^\sharp: H^k(X) \leftarrow H^k(Y)$. If $f: X \to Y$ and $g: Y \to Z$, then
$g^\sharp: H^k(Z) \to H^k(Y)$ and $f^\sharp: H^k(Y) \to H^k(X)$. Since 
$(g \circ f)^* = f^* \circ g^*$, it follows that 
$(g \circ f)^\sharp = f^\sharp \circ g^\sharp$. 

We will have more to say about pullbacks in cohomology after we have established
some more machinery.

\section{Integration over a fiber and the 
Poincare Lemma}

\begin{thm}[Integration over a fiber] Let $X$ be any manifold.
Let the {\em zero section} $s_0:X \to \R \times X$ be given by $s_0(x)=(0,x)$,
and let the {\em projection} $\pi: \R \times X \to X$ be given 
by $\pi(t,x)=x$. Then
$s_0^\sharp: H^k(\R \times X) \to H^k(X)$ and 
$\pi^\sharp: H^k(X) \to H^k(\R \times X)$ are isomorphisms
and are inverses of each other. 
\end{thm}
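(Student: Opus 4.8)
The plan is to use the obvious identity $\pi \circ s_0 = \mathrm{id}_X$ for one direction and to construct an explicit chain homotopy for the other. Since $f \mapsto f^\sharp$ is a contravariant functor, $\pi\circ s_0 = \mathrm{id}_X$ gives immediately $s_0^\sharp\circ\pi^\sharp = (\pi\circ s_0)^\sharp = \mathrm{id}_{H^k(X)}$. Thus the whole content of the theorem is the assertion that $\pi^\sharp\circ s_0^\sharp = \mathrm{id}_{H^k(\R\times X)}$ as well; granting that, both maps are bijective and mutually inverse.

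To prove $\pi^\sharp\circ s_0^\sharp=\mathrm{id}$, I would introduce the \emph{integration-along-the-fiber} operator $K\colon\Omega^k(\R\times X)\to\Omega^{k-1}(\R\times X)$. Let $t$ be the coordinate on the $\R$ factor and $\partial_t$ the corresponding vector field on $\R\times X$. Every $k$-form $\omega$ on $\R\times X$ splits canonically as $\omega = \omega_0 + dt\wedge\omega_1$ with $\omega_1 = i_{\partial_t}\omega$ and $\omega_0 = \omega - dt\wedge i_{\partial_t}\omega$, neither piece containing $dt$. Writing $\omega_1$ locally on $X$ as $\sum_I g_I(t,x)\,dx^I$, set $K\omega = \sum_I\bigl(\int_0^t g_I(s,x)\,ds\bigr)dx^I$; since integrating a smooth $s$-family of $k$-forms on $X$ over $s\in[0,t]$ is a coordinate-free operation, this $K$ is globally well defined.

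The key step is the homotopy identity, valid on all of $\Omega^\bullet(\R\times X)$:
$$ dK\omega + Kd\omega \;=\; \omega - \pi^*\bigl(s_0^*\omega\bigr). $$
By linearity it suffices to check this on forms of the two types $\omega = f(t,x)\,\pi^*\alpha$ and $\omega = f(t,x)\,dt\wedge\pi^*\alpha$, with $\alpha$ a form on $X$. For the first type $K\omega = 0$, while $Kd\omega = \int_0^t\partial_s f(s,x)\,ds\;\pi^*\alpha = (f(t,x)-f(0,x))\,\pi^*\alpha$ by the one-variable fundamental theorem of calculus, and since $s_0^*(dt) = d(t\circ s_0) = d(0) = 0$ one has $\pi^*s_0^*\omega = f(0,x)\,\pi^*\alpha$, so the identity holds. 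For the second type $\pi^*s_0^*\omega = 0$ for the same reason, and a short computation---expand $d\omega$, apply $K$, and commute $d$ along $X$ past $\int_0^t ds$ (differentiation under the integral sign)---shows the interior terms cancel and $dK\omega + Kd\omega = \omega$. The one place needing care is the sign incurred when moving $dt$ past a form.

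Finally, if $\omega$ is a closed $k$-form on $\R\times X$, then $\omega - \pi^*s_0^*\omega = dK\omega$ is exact, so $[\omega] = [\pi^*s_0^*\omega] = \pi^\sharp s_0^\sharp[\omega]$; hence $\pi^\sharp\circ s_0^\sharp = \mathrm{id}_{H^k(\R\times X)}$, and together with the first paragraph this finishes the proof. I expect the main obstacle to be purely organizational: pinning down the canonical $dt$-splitting, checking that $K$ is coordinate-independent, and keeping the signs straight in the homotopy formula; there is no analytic difficulty beyond the ordinary Fundamental Theorem of Calculus.
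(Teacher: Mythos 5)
Your proposal is correct and follows essentially the same route as the paper: reduce to showing $\pi^\sharp\circ s_0^\sharp=\mathrm{id}$, then build the integration-along-the-fiber homotopy operator (your $K$ is the paper's $P$, defined by the same $\int_0^t$ applied to the $dt$-component of the form) and verify $dK+Kd=1-\pi^*s_0^*$. The only cosmetic difference is that you package the $dt$-splitting via $i_{\partial_t}$ and check the identity on monomial forms rather than on the general sum.
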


\begin{proof}
Since $\pi \circ s_0$ is the identity on $X$, $s_0^\sharp \circ \pi^\sharp$ is the
identity on $H^k(X)$. We must show that $\pi^\sharp \circ s_0^\sharp$ is the 
identity on $H^k(\R \times X)$. We do this by constructing 
a map 
$P: \Omega^k(\R \times X) \to \Omega^{k-1}(\R \times X),$ 
called a {\em homotopy operator,} such that for any $k$ form
$\alpha$ on $\R \times X$, 
$$ (1 - \pi^*\circ s^*) = d (P(\alpha)) + P(d\alpha).$$
If $\alpha$ is closed, this implies 
that $\alpha$ and $\pi^*(s_0^*\alpha)$ differ by the exact form $d(P(\alpha))$
and so represent the same class in cohomology,
and hence that $\pi^\sharp \circ s_0^\sharp[\alpha]=[\alpha]$. Since this is 
true for all $\alpha$, $\pi^\sharp \circ s_0^\sharp$ is the identity. 

Every $k$-form on $Y$ can be uniquely written as a product
$$\alpha(t,x) = dt \wedge \beta(t,x) + \gamma(t,x),$$
where $\beta$ and $\gamma$ have no $dt$ factors. The $(k-1)$-form $\beta$ 
can be written as a sum:
$$\beta(t,x) = \sum_J \beta_J(t,x) dx^J,$$
where $\beta_J(t,x)$ is an ordinary function, and 
we likewise write
$$\gamma(t,x) = \sum_I \gamma_I(t,x) dx^I.$$ 
We define
$$P(\alpha)(t,x) = \sum_J (\int_0^t \beta_J(s,x) ds) dx^J.$$
$P(\alpha)$ is called the {\em integral along the fiber} of $\alpha$. 
Note that $s_0^*\alpha$, evaluated at $x$, is $\sum_I \gamma(0,x) dx^I$,
and that 
$$(1-\pi^*s_0^*)\alpha (t,x) = dt \wedge \beta(t,x) + 
\sum_I(\gamma(t,x)-\gamma(t,0)) dx^I.$$ 

Now
we compute $dP(\alpha)$ and $P(d\alpha)$. 
Since
$$ d\alpha(t,x) = -dt \wedge \sum_{j,J} (\partial_j \beta_J(t,x))dx^j\wedge dx^J
+ \sum_{I} \partial_t\gamma_I(t,x) dt\wedge dx^I + \sum_{I,j}
\partial_j \gamma_I(t,x) dx^j \wedge dx^I,$$
where $j$ runs over the coordinates of $X$, we have 
$$P(d\alpha)(t,x) = -\sum_{j,J} \int_0^t \Big (\partial_j \beta_J(s,x)ds \Big )dx^j\wedge
dx^J + \sum_I \Big (\gamma_I(t,x)-\gamma_I(0,x) \Big ) dx^I,$$ 
where we have used $\int_0^t \partial_s \gamma_I(s,x) ds = \gamma_I(t,x)-\gamma_I(0,x)$. 
Meanwhile,
$$d(P(\alpha)) = \sum_{j,J} \left (\int_0^t \partial_j \beta_J(s,x) ds \right ) dx^j
\wedge dx^J + \sum_J \beta_J(t,x) dt \wedge dx^J,$$
so 
$$ (dP+Pd)\alpha(t,x) = \sum_I \Big (\gamma_I(t,x)-\gamma_I(0,x) \Big ) dx^I + 
dt \wedge \beta(t,x) = (1-\pi^*s_0^*) \alpha(t,x).$$
\end{proof}

\noindent{\bf Exercise 1:} In this proof, the operator 
$P$ was defined relative to local coordinates on $X$. Show that this 
is in fact well-defined. That is, if we have two parametrizations
$\psi$ and $\phi$, and we compute $P(\alpha)$ using the $\phi$ coordinates
and then convert to the $\psi$ coordinates, we get the same result as if 
we computed $P(\alpha)$ directly using the $\psi$ coordinates.

\smallskip
 
An immediate corollary of this theorem 
is that $H^k(\R^n)=H^k(\R^{n-1})=\cdots=H^k(\R^0)$. 
In particular,
\begin{thm}[Poincare Lemma] On $\R^n$, or on any manifold diffeomorphic to 
$\R^n$, every closed form of degree 1 or higher is exact. 
\end{thm}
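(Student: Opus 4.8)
The plan is to obtain the Poincar\'e Lemma as an essentially immediate corollary of the Integration over a Fiber theorem just proved, together with the remark preceding it that $H^k(\R^n)=H^k(\R^{n-1})=\cdots=H^k(\R^0)$. So the first step is to make that chain of isomorphisms explicit: writing $\R^n=\R\times\R^{n-1}$ (splitting off, say, the last coordinate), the theorem applies with $X=\R^{n-1}$ and gives that $\pi^\sharp\colon H^k(\R^{n-1})\to H^k(\R^n)$ is an isomorphism. Iterating this $n$ times reduces $H^k(\R^n)$ to $H^k(\R^0)$.

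The second step is the base case. Here $\R^0$ is a single point, on which the only forms are $0$-forms (constants); in particular $\Omega^k(\R^0)=0$ for every $k\ge 1$, so certainly $H^k(\R^0)=0$ for $k\ge 1$. Running this back through the chain of isomorphisms from the first step gives $H^k(\R^n)=0$ for all $k\ge 1$. Unwinding the definition of $H^k$ as the closed $k$-forms modulo the exact ones, this says exactly that on $\R^n$ every closed $k$-form with $k\ge 1$ is exact, which is the assertion for $\R^n$ itself.

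The third step handles a manifold $M$ diffeomorphic to $\R^n$. If $f\colon M\to\R^n$ is a diffeomorphism, then by the functoriality $(g\circ f)^\sharp=f^\sharp\circ g^\sharp$ established in the section on Pullbacks in Cohomology (applied to $f$ and $f^{-1}$), the maps $f^\sharp\colon H^k(\R^n)\to H^k(M)$ and $(f^{-1})^\sharp\colon H^k(M)\to H^k(\R^n)$ are mutually inverse isomorphisms. Hence $H^k(M)\cong H^k(\R^n)=0$ for $k\ge 1$, i.e. every closed form on $M$ of degree $\ge 1$ is exact.

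I do not expect any genuine obstacle: all the analytic content was already spent on constructing the homotopy operator $P$ and verifying $dP+Pd=1-\pi^*s_0^*$. The only points deserving a moment's care are bookkeeping — confirming that the composite $H^k(\R^n)\to H^k(\R^0)$ really is built from the isomorphisms supplied by the theorem, disposing of the (trivial) base case $H^k(\mathrm{pt})$, and invoking functoriality for the diffeomorphism-invariance step. If anything here is ``the hard part,'' it is merely keeping the chain of isomorphisms straight.
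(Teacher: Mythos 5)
Your proposal is correct and is exactly the paper's route: the paper derives the Poincar\'e Lemma as an immediate corollary of the integration-over-a-fiber theorem via the chain $H^k(\R^n)=H^k(\R^{n-1})=\cdots=H^k(\R^0)=0$ for $k\ge 1$. You merely spell out the base case and the diffeomorphism-invariance step, which the paper leaves implicit.
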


\noindent {\bf Exercise 2:} Show that a vector field $\vec v$ on $\R^3$ is
the gradient of a function if and only if $\nabla \times \vec v=0$ everywhere.

\noindent {\bf Exercise 3:} Show that a vector field $\vec v$ on $\R^3$ 
can be written
as a curl (i.e., $\vec v = \nabla \times \vec w$) if and only if 
$\nabla \cdot \vec v=0$. 

\noindent {\bf Exercise 4:} Now consider the 3-dimensional torus 
$X = \R^3/\Z^3$. Construct a vector
field $\vec v(x)$ 
whose curl is zero that is not a gradient (where we use the local
isomorphism with $\R^3$ to define the curl and gradient). 
Construct a vector field
$\vec w(x)$ whose divergence is zero that is not a curl. 

In the integration-along-a-fiber theorem, we showed that $s_0^\sharp$ was 
the inverse of $\pi^\sharp$. However, we could have used the 1-section
$s_1(x)=(1,x)$ instead of the 0-section and obtained the same result. (Just
replace 0 with 1 everwhere that refers to a value of $t$). Thus
$$s_1^\sharp = (\pi^\sharp)^{-1}=s_0^\sharp.$$
This has important consequences for homotopies.

\begin{thm} Homotopic maps induce the {\em same} map in cohomology. 
That is, if $X$ and $Y$ are manifolds and $f_{0,1}: X \to Y$ are 
smooth homotopic maps, then $f_1^\sharp=f_0^\sharp$.
\end{thm}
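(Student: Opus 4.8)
The plan is to realize both $f_0$ and $f_1$ as compositions through a single smooth map on $\R \times X$, and then to feed everything into the two results just established: the contravariant functoriality of the pullback on cohomology (a consequence of $(g\circ f)^* = f^* \circ g^*$ on forms), together with the equality $s_0^\sharp = s_1^\sharp$ observed immediately before the theorem.

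First I would upgrade the homotopy to a smooth map defined on all of $\R \times X$. By hypothesis there is a smooth $G : [0,1] \times X \to Y$ with $G(0,\cdot) = f_0$ and $G(1,\cdot) = f_1$. Choose a smooth nondecreasing $\lambda : \R \to [0,1]$ with $\lambda \equiv 0$ on $(-\infty, 1/3]$ and $\lambda \equiv 1$ on $[2/3,\infty)$, and set $F(t,x) = G(\lambda(t), x)$. Then $F : \R \times X \to Y$ is smooth, $F(0,x) = f_0(x)$, and $F(1,x) = f_1(x)$. Writing $s_0, s_1 : X \to \R \times X$ for the zero- and one-sections, this says exactly that $f_0 = F \circ s_0$ and $f_1 = F \circ s_1$.

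Next I would push this through the functor. By functoriality, $f_0^\sharp = (F \circ s_0)^\sharp = s_0^\sharp \circ F^\sharp$ and $f_1^\sharp = (F \circ s_1)^\sharp = s_1^\sharp \circ F^\sharp$ as maps $H^k(Y) \to H^k(X)$. But we already know that $s_0^\sharp = s_1^\sharp$, since both equal $(\pi^\sharp)^{-1}$, the inverse of the isomorphism supplied by the integration-over-a-fiber theorem (the argument there applies verbatim with the $1$-section in place of the $0$-section). Composing with $F^\sharp$ on the right gives $f_0^\sharp = f_1^\sharp$, which is the claim.

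The only genuine obstacle is the smoothing step, i.e.\ passing from $[0,1]\times X$ to $\R \times X$, since the integration-over-a-fiber machinery was built on $\R \times X$: one must check that the reparametrization $\lambda$ can be taken genuinely smooth, including at the junction points $t=1/3$ and $t=2/3$, and that it leaves the endpoint values at $t=0$ and $t=1$ untouched. Everything after that is formal bookkeeping with the contravariant functor $f \mapsto f^\sharp$. If one wanted an explicit cochain homotopy rather than just the equality of induced maps, one could unwind the identity $(1 - \pi^* s_0^*) = dP + Pd$ from the previous proof applied to $F^*\omega$: for any closed $k$-form $\omega$ on $Y$ this yields $f_1^*\omega - f_0^*\omega = d\bigl(s_1^* P(F^*\omega)\bigr)$, an honest primitive. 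But the clean statement needs none of this.
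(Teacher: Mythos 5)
Your argument is correct and is essentially the paper's own proof: extend the homotopy to a smooth map $F:\R\times X\to Y$ with $f_i = F\circ s_i$, then apply functoriality of $\sharp$ together with $s_0^\sharp = s_1^\sharp = (\pi^\sharp)^{-1}$. The extra care you take with the reparametrization $\lambda$ is a welcome detail the paper glosses over, but it does not change the route.
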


\begin{proof}
If $f_0$ and $f_1$ are homotopic, then we can find a smooth map 
$F: \R \times X
\to Y$ such that $F(t,x)=f_0(x)$ for $t \le 0$ and $F(t,x)=f_1(x)$ for 
$t \ge 1$. But then $f_1 = F \circ s_1$ and $f_0=F\circ s_0$. Thus 
$$ f_1^\sharp = s_1^\sharp \circ F^\sharp = s_0^\sharp \circ F^\sharp
= (F \circ s_0)^\sharp = f_0^\sharp.$$
\end{proof}

\noindent{\bf Exercise 5:} Recall that if $A$ is a submanifold of $X$, then a {\em retraction} 
$r: X \to A$ (sometimes just called a {\em retract}) is a smooth map such that $r(a)=a$ for all $a \in A$. If such a map exists, we say that 
$A$ is a {\em retract} of $X$.  Suppose that $r: X \to A$ is such a retraction,
and that $i_A$ be the inclusion
of $A$ in $X$.  Show that $r^\sharp:H^k(A)\to H^k(X)$ is surjective and 
$i_A^\sharp: H^k(X) \to H^k(A)$ is injective
in every degree $k$. [We will soon see that $H^k(S^k)=\R$. This exercise,
combined with the Poincare Lemma, will then
provide another proof that there are no retractions from the unit ball in 
$\R^n$ to the unit sphere.]

\smallskip

\noindent{\bf Exercise 6:} Recall that a {\em deformation retraction} is 
a retraction $r: X \to A$ such that $i_A \circ r$ is homotopic 
to the identity on $X$, in which case we say that $A$ is a {\em deformation retract} of $X$. 
Suppose that $A$ is a deformation retract of 
$X$. Show that $H^k(X)$ and $H^k(A)$ are isomorphic. [This provides another
proof of the Poincare Lemma, insofar as $\R^n$ deformation retracts to a point.]

\section{Mayer-Vietoris Sequences 1: Statement}

Suppose that a manifold $X$ can be written as the union of two open 
submanifolds, $U$ and $V$. 
The Mayer-Vietoris Sequence is a technique for computing the cohomology
of $X$ from the cohomologies of $U$, $V$ and $U \cap V$. This has direct
practical importance, in that it allows us to compute things like 
$H^k(S^n)$ and many other simple examples. It also allows us to prove many
properties of compact manifolds by induction on the number of open sets in 
a ``good cover'' (defined below). Among the things that can be proved with
this technique (of which we will only prove a subset) are:
\begin{enumerate}
\item $H^k(S^n)=\R$ if $k=0$ or $k=n$ and is trivial otherwise. 
\item If $X$ is compact, then $H^k(X)$ is finite-dimensional. This is hardly
obvious, since $H^k(X)$ is the quotient of the infinite-dimensional vector
space of closed $k$-forms by another infinite-dimensional space of exact 
$k$-forms. But as long as $X$ is compact, the quotient is finite-dimensional.
\item If $X$ is a compact, oriented $n$-manifold, then $H^n(X)=\R$. 
\item If $X$ is a compact, oriented $n$-manifold, then $H^k(X)$ is isomorphic
to $H^{n-k}(X)$. (More precisely to the dual of $H^{n-k}(X)$, but every
finite-dimensional vector space is isomorphic to its own dual.) This is 
called Poincare duality. 
\item If $X$ is any compact manifold, orientable or not, then $H^k(X)$ is 
isomorphic to $Hom(H_k(X),\R)$, where $H_k(X)$ is the $k$-th homology group
of $X$. 
\item A formula for $H^k(X \times Y)$ in terms of the cohomologies of $X$ and 
$Y$. 
\end{enumerate}

Suppose we have a sequence 
$$ V^1 \xrightarrow {L_1} V^2 \xrightarrow {L_2} V^3 \xrightarrow {L_3} \cdots$$
where each $V^i$ is a vector space and each $L_i: V^i \to V^{i+1}$ is a linear
transformation. We say that this sequence is {\em exact} if the kernel of 
each $L_i$ equals the image of the previous $L_{i-1}$. In particular, 
$$ 0 \xrightarrow {} V \xrightarrow L W \xrightarrow {} 0 $$
is exact if and only if $L$ is an isomorphism, since the kernel of $L$
has to equal the image of 0, and the image of $L$ has to equal the kernel
of the 0 map on $W$. 

\smallskip

\noindent{\bf Exercise 7:} A {\em short exact sequence} involves three spaces 
and two maps:
$$ 0 \rightarrow U \xrightarrow i V \xrightarrow j W \rightarrow 0 $$
Show that if this sequence is exact, there must be an isomorphism $h:V 
\to U \oplus W$, with $h\circ i(u)=(u,0)$
and $j\circ h^{-1}(u,w)=w$. 

\smallskip

Exact sequences can be defined for homeomorphisms between arbitrary 
Abelian groups, and not just vector spaces, but are much simpler when 
applied to vector spaces. In particular, the analogue of the previous exercise
is {\em false} for groups. (E.g. one can define a short exact sequence 
$0 \to \Z_2 \to \Z_4 \to \Z_2 \to 0$ even though $\Z_4$ is not isomorphic to 
$\Z_2 \times \Z_2$.)

Suppose that $X = U \cup V$, where $U$ and $V$ are open submanifolds of $X$.
There are natural inclusion maps $i_U$ and $i_V$ of $U$ and $V$ into $X$, 
and these induce maps $i_U^*$ and $i_V^*$ from $\Omega^k(X)$ to 
$\Omega^k(U)$ and $\Omega^k(V)$. Note that $i_U^*(\alpha)$ is 
just the restriction of $\alpha$ to $U$, while $i_V^*(\alpha)$ is the 
restriction of $\alpha$ to $V$.
Likewise, there are inclusions
$\rho_U$ and $\rho_V$ of $U \cap V$ in $U$ and $V$, respectively, and 
associated restrictions $\rho_U^*$ and $\rho_V^*$ from $\Omega^k(U)$ and 
$\Omega^k(V)$ to $\Omega^k(U\cap V)$. Together, these form a 
sequence:
\be \label{short-exact-forms} 
0 \rightarrow \Omega^k(X) \xrightarrow {i_k} 
\Omega^k(U) \oplus \Omega^k(V) \xrightarrow {j_k}
\Omega^k(U\cap V) \rightarrow 0,  
\ee
where the maps are defined as follows. If $\alpha \in \Omega^k(X)$,
$\beta \in \Omega^k(U)$ and $\gamma \in \Omega^k(V)$, 
then
\begin{eqnarray*} i_k(\alpha) &=& (i_U^* \alpha, i_V^*\alpha) \cr 
j_k(\beta,\gamma) & = & r_U^*\beta - r_V^*\gamma.
\end{eqnarray*}
Note that $d(i_k(\alpha))=i_{k+1}(d\alpha)$ and that $d(j_k(\beta,\gamma))=
j_{k+1}(d\beta,d\gamma)$. That is, the diagram 
$$   \begin{CD}
 0 @>>>   \Omega^k(X)   @>{i_k}>> \Omega^k(U)\oplus\Omega^k(V) @>{j_k}>> \Omega^k(U \cup V) @>>> 0 \\
 @.    @VV{d_k}V           @VV{d_k}V  @VV{d_k}V  @.\\
0 @>>>  \Omega^{k+1}(X)   @>{i_{k+1}}>> \Omega^{k+1}(U)\oplus\Omega^{k+1}(V) @>{j_{k+1}}>> \Omega^{k+1}(U \cup V)  @>>> 0
   \end{CD}
$$   
commutes. Thus $i_k$ and $j_k$ send closed forms to closed forms
and exact forms to exact forms, and induce maps 
$$ i_k^\sharp: H^k(X) \to H^k(U)\oplus H^k(V); \qquad 
j_k^\sharp: H^k(U) \oplus H^k(V) \to H^k(U\cap V).$$

\begin{thm}[Mayer-Vietoris] There exists a map
$d_k^\sharp: H^k(U \cap V) \to H^{k+1}(X)$ such that the sequence 
$$ \cdots H^k(X) \xrightarrow {i_k^\sharp} H^k(U)\oplus H^k(V) \xrightarrow {j_k^\sharp} H^k(U\cap V) \xrightarrow {d_k^\sharp}  H^{k+1}(X) \xrightarrow {i_{k+1}^\sharp} H^{k+1}(U)\oplus H^{k+1}(V) \rightarrow \cdots $$
is exact. 
\end{thm}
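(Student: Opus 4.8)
The plan is to read (\ref{short-exact-forms}), together with the commuting diagram displayed just above it, as a short exact sequence of cochain complexes $0\to\Omega^\bullet(X)\to\Omega^\bullet(U)\oplus\Omega^\bullet(V)\to\Omega^\bullet(U\cap V)\to 0$ (the vertical maps being $d$), and then to run the standard ``zig-zag'' (snake lemma) argument that turns any such sequence into a long exact sequence of cohomology groups. Three things need attention: that the rows really are exact; the construction of the connecting map $d_k^\sharp$; and the verification that the resulting long sequence is exact. Injectivity of $i_k$, the relation $j_k\circ i_k=0$, and exactness at the middle term (a compatible pair of forms on $U$ and $V$ is the same as a single form on $X=U\cup V$) are immediate from the definitions. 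The only point that uses anything beyond bookkeeping is surjectivity of $j_k$, and here a partition of unity enters: choose smooth $\lambda_U,\lambda_V\ge 0$ with $\lambda_U+\lambda_V=1$, with $\lambda_U$ supported in $U$ and $\lambda_V$ supported in $V$. Given $\eta\in\Omega^k(U\cap V)$, the form $\lambda_V\eta$ extends by zero to a smooth $k$-form $\beta$ on $U$ (it vanishes near every point of $U\setminus V$), and $-\lambda_U\eta$ extends by zero to $\gamma\in\Omega^k(V)$; then $j_k(\beta,\gamma)=\lambda_V\eta+\lambda_U\eta=\eta$ on the overlap.

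To construct $d_k^\sharp$: start with a closed $\eta\in\Omega^k(U\cap V)$ and choose, as above, a lift $(\beta,\gamma)$ with $j_k(\beta,\gamma)=\eta$. Since $j_{k+1}(d\beta,d\gamma)=d\eta=0$, exactness at the middle gives a unique $\xi\in\Omega^{k+1}(X)$ with $i_{k+1}(\xi)=(d\beta,d\gamma)$; it is closed because $i_{k+2}$ is injective and $i_{k+2}(d\xi)=(d^2\beta,d^2\gamma)=0$. Define $d_k^\sharp[\eta]=[\xi]$. The routine well-definedness checks — independence of the choice of lift $(\beta,\gamma)$, independence of the representative $\eta$ within its class, and linearity — are each a one- or two-line chase: changing the lift by $i_k(\mu)$, or changing $\eta$ by $d\tau$ (lifted via the same partition-of-unity trick), alters $\xi$ only by an exact form, as one sees from the injectivity of the maps $i_\bullet$.

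Exactness of the long sequence then amounts to six inclusions, each a diagram chase of the same flavour. For instance, at $H^k(U)\oplus H^k(V)$ one has $j_k^\sharp i_k^\sharp=0$ because $j_k i_k=0$, while conversely if $\beta,\gamma$ are closed with $\beta-\gamma=d\tau$ on $U\cap V$, correcting $\beta$ and $\gamma$ by $d$ of the zero-extensions of $\lambda_V\tau$ and $-\lambda_U\tau$ makes them agree on $U\cap V$, so they glue to a global closed form mapping to $[\beta,\gamma]$. At $H^k(U\cap V)$, $d_k^\sharp j_k^\sharp=0$ because for a class coming from closed $(\beta,\gamma)$ one may take $(\beta,\gamma)$ itself as the lift, whence $\xi=0$; and if $d_k^\sharp[\eta]=0$, writing the corresponding $\xi$ as $d\mu$ shows $(\beta,\gamma)-i_k(\mu)$ is a closed lift of $\eta$, so $[\eta]$ lies in the image of $j_k^\sharp$. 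Exactness at $H^{k+1}(X)$ is similar: $i_{k+1}^\sharp d_k^\sharp=0$ since $i_{k+1}(\xi)=(d\beta,d\gamma)$ is exact, and if $\xi$ is closed with $(\xi|_U,\xi|_V)=(d\beta,d\gamma)$, then $\eta:=j_k(\beta,\gamma)$ is closed and unwinding the definition gives $d_k^\sharp[\eta]=[\xi]$.

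I expect the main obstacle to be purely organizational rather than conceptual. Every arrow and every inclusion above is an instance of the snake lemma, so the real cost is keeping the bookkeeping straight: which of the four spaces $X$, $U$, $V$, $U\cap V$ each intermediate form lives on, which partition-of-unity weight gets applied where, and in which degree. The single genuinely geometric ingredient is the partition of unity $\{\lambda_U,\lambda_V\}$, used first for surjectivity of $j_k$ and then reused for the ``glue the pieces back together'' steps; once it is in hand, the theorem is homological algebra.
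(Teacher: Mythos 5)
Your proposal is correct and follows essentially the same route as the paper: establish exactness of the short sequence of form complexes (with the partition-of-unity trick for surjectivity of $j_k$, including the correct observation that $\lambda_V\eta$ extends by zero to a form on $U$ and $-\lambda_U\eta$ to one on $V$), then run the snake-lemma construction of $d_k^\sharp$ and the six diagram chases. The only cosmetic difference is that you phrase some of the chases concretely in terms of forms where the paper abstracts them into a general snake lemma for cochain complexes, but the content is identical.
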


The proof is a long slog, and warrants a section of its own. Then we will
develop the uses of Mayer-Vietoris sequences. 

\section{Proof of Mayer-Vietoris}

The proof has several big steps.
\begin{enumerate}
\item We show that the sequence 
(\ref{short-exact-forms}) of forms is actually exact. 
\item Using that 
exactness, and the fact that $i$ and $j$ commute with $d$, we then construct
the map $d_k^\sharp$. 
\item Having constructed the maps, we show exactness at $H^k(U)\oplus H^k(V)$, i.e., 
that the image of $i_k^\sharp$
equals the kernel of $j_k^\sharp$.
\item We show exactness at $H^k(U\cap V)$, i.e., that the image
of $j_k^\sharp$ equals the kernel of $d_k^\sharp$. 
\item We show exactness at 
$H^{k+1}(X)$, i.e. that the kernel of $i_{k+1}^\sharp$ equals the image of 
$d_{k}^\sharp$.
\item Every step but the first is formal, and applies just as well to 
{\em any} short exact sequence of (co)chain complexes. This construction 
in homological algebra is called the {\em snake lemma}, and may be familiar
to some of you from algebraic topology. If so, you can skip ahead after 
step 2. If not, don't worry. We'll cover everything from scratch. 
\end{enumerate}

{\bf Step 1:}
Showing that $$ 0 \rightarrow \Omega^k(X) \xrightarrow {i_k}
\Omega^k(U) \oplus \Omega^k(V) \xrightarrow {j_k} \Omega^k(U \cap V) \rightarrow
0$$
amounts to showing that $i_k$ is injective, 
that $Im(i_k)=Ker(j_k)$, and that $j_k$ is surjective. 
The first two are easy. The subtlety is in showing that $j_k$ is surjective.

Recall that $i_U^*$, $i_V^*$, $r_U^*$ and $r_V^*$ are all restriction maps.
If $\alpha \in \Omega^k(X)$ and $i_k(\alpha)=0$,
then the restriction of $\alpha$ to $U$ is zero, as is the restriction to $V$.
But then $\alpha$ itself is the zero form on $X$. This shows that $i_k$ is 
injective. 

Likewise, for any $\alpha \in \Omega^k(X)$, $i_U^*(\alpha)$ and $i_V^*(\alpha)$
agree on $U \cap V$, so $r_U^*i_U^*(\alpha)=r_V^*i_V^*(\alpha)$, so 
$j_k(i_k(\alpha))=0$. Conversely, if $j_k(\beta,\gamma)=0$, then $r_U^*(\beta)=
r_V^*(\gamma)$, so we can stitch $\beta$ and $\gamma$ into a form $\alpha$ on
$X$ that equals $\beta$ on $U$ and equals $\gamma$ on $V$ (and equals both
of them on $U\cap V$), so $(\beta,\gamma) \in Im(i_k)$. 

Now suppose that $\mu \in \Omega^k(U \cap V)$ and that $\{\rho_U, \rho_V\}$
is a partition of unity of $X$ relative to the open cover $\{U,V\}$. Since
the function $\rho_U$ is zero outside of $U$, the form $\rho_U \mu$ 
can be extended to a smooth form on $V$ by declaring that $\rho_U\mu=0$
on $V - U$. 
Note that 
$\rho_U \mu$ is {\bf not} a form on $U$, since $\mu$ is not defined 
on the entire support of $\rho_U$. Rather, $\rho_U \mu$ is a form
on $V$, since $\mu$ is defined at all points of $V$ where $\rho_U \ne 0$.
Likewise, $\rho_V \mu$ is a form on $U$. On $U\cap V$, we have  
$ \mu = \rho_V\mu-(-\rho_U \mu)$. This means that 
$\mu  = j_k(\rho_V \mu,-\rho_U\mu)$. 

The remaining steps are best described in the language of homological algebra.
A {\em cochain complex} $A$ is a sequence of vectors spaces\footnote{For 
(co)homology theories with integer coefficients one usually considers 
sequences of Abelian groups. For de Rham theory, vector spaces will do.}
$\{A^0, A^1, A^2, \ldots\}$ together with maps $d_k: A^k \to A^{k+1}$ such
that $d_{k}\circ d_{k-1}=0$. We also define $A^{-1}=A^{-2}=\cdots$ to 
be 0-dimensional vector spaces (``0'') and 
$d_{-1}=d_{-2}=\cdots$ to be the zero map. 
The {\em $k$-th cohomology of the complex} is
$$ H^k(A) = \frac{\hbox{kernel of $d_k$}}{\hbox{image of $d_{k-1}$}}.$$

A {\em cochain map} $i: A \to B$ between complexes $A$ and $B$ is a family of maps 
$i_k: A^k \to B^k$ such that $d_k(i_k(\alpha))=i_{k+1}(d_k\alpha)$ for all $k$
and all $\alpha \in A^k$, i.e., such that the diagram
$$ \begin{CD}
A^k @>{i_k}>> B^k \\ 
@VV{d_k^A}V @VV{d_k^B}V \\
A^{k+1} @>{i_{k+1}}>> B^{k+1}
\end{CD}
$$
commutes for all $k$, where we have labeled the two differential maps $d_k^A$ and $d_k^B$ to emphasize that they 
are defined on different spaces. 

\smallskip

\noindent{\bf Exercise 8:} Show that a cochain map $i: A \to B$ induces maps in cohomology
$$i_k^\sharp: H^k(A) \to H^k(B); \qquad [\alpha] \mapsto [i_k\alpha].$$

\smallskip

If $A$, $B$ and $C$ are cochain complexes and $i: A \to B$ and $j: B \to C$ 
are cochain maps, then the sequence 
$$ 0 \rightarrow A \xrightarrow i B \xrightarrow j C \rightarrow 0$$
is said to be a {\em short exact sequence} of cochain complexes if, for each
$k$, 
$$ 0 \rightarrow A^k \xrightarrow {i_k} B^k \xrightarrow {j_k} C^k \rightarrow 0$$  
is a short exact sequence of vector spaces.  

So far, we have constructed a short exact sequence of cochain complexes 
with $A^k=\Omega^k(X)$, $B^k=\Omega^k(U) \oplus \Omega^k(V)$, and $C^k=
\Omega^k(U \cap V)$. The theorem on Mayer-Vietoris sequences is then a 
special case of 

\begin{thm}[Snake Lemma] Let $0 \rightarrow A \xrightarrow i B 
\xrightarrow j C \rightarrow 0$ be a short exact sequence of cochain complexes.
Then there is a family of maps $d_k^\sharp: H^k(C) \to H^{k+1}(A)$ such that 
the sequence $$\cdots \to H^k(A) \xrightarrow {i_k^\sharp} H^k(B)   
\xrightarrow {j_k^\sharp} H^k(C) \xrightarrow {d_k^\sharp} H^{k+1}(A)  
\xrightarrow{ i_{k+1}^\sharp} H^{k+1}(B) \rightarrow \cdots$$
is exact. 
\end{thm}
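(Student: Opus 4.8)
The plan is to run the standard diagram chase known as the snake lemma, treating $A$, $B$, $C$ purely as cochain complexes and forgetting that they arose from differential forms. Throughout I would write $d$ for every differential, letting the superscript on the space indicate which one is meant, and I would repeatedly invoke the two structural facts available at each level: $i_k$ is injective and $j_k$ is surjective, and $\ker j_{k+1} = \operatorname{im} i_{k+1}$. Exercise 8 already gives the induced maps $i_k^\sharp$ and $j_k^\sharp$ on cohomology, so the real content is (a) constructing $d_k^\sharp$ and (b) verifying exactness at each of the three kinds of spot.

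First I would construct the connecting map. Given a class in $H^k(C)$, pick a closed representative $c \in C^k$ and, using surjectivity of $j_k$, lift it to some $b \in B^k$ with $j_k b = c$. Then $j_{k+1}(db) = d(j_k b) = dc = 0$, so $db \in \ker j_{k+1} = \operatorname{im} i_{k+1}$, and by injectivity of $i_{k+1}$ there is a unique $a \in A^{k+1}$ with $i_{k+1}(a) = db$. A one-line chase shows $a$ is closed: $i_{k+2}(da) = d(i_{k+1}a) = d(db) = 0$, hence $da = 0$. Set $d_k^\sharp[c] = [a]$. I would then check well-definedness: changing the lift $b$ by an element of $\ker j_k = \operatorname{im} i_k$ changes $a$ by an exact element of $A^{k+1}$, and replacing $c$ by $c + dc'$ can be absorbed by changing the lift to $b + d\tilde b$, with $\tilde b$ any lift of $c'$, which does not change $db$ at all. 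Linearity of $d_k^\sharp$ is immediate.

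With the maps in hand, exactness is checked at each of the three spots, two inclusions apiece. At $H^k(B)$: $j_k^\sharp i_k^\sharp = 0$ because $j_k i_k = 0$; conversely, if a closed $b$ satisfies $j_k b = dc$, lift $c$ to $\tilde b$ and note that $b - d\tilde b$ is closed, maps to $0$ under $j_k$, hence equals $i_k(a)$ for a necessarily closed $a$, so $[b] = i_k^\sharp[a]$. At $H^k(C)$: if $c = j_k b$ with $b$ closed, the lift $b$ forces $a = 0$, so $d_k^\sharp j_k^\sharp = 0$; conversely, if $d_k^\sharp[c] = 0$ then the witnessing $a$ equals $da'$, and $b - i_k(a')$ is a closed lift of $c$, so $[c] = j_k^\sharp[b - i_k a']$. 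At $H^{k+1}(A)$: by construction $i_{k+1}(a) = db$ is exact, so $i_{k+1}^\sharp d_k^\sharp = 0$; conversely, if $i_{k+1}(a) = db$ for some $b \in B^k$, then $c := j_k b$ is closed since $dc = j_{k+1}(db) = j_{k+1} i_{k+1}(a) = 0$, and $b$ is then a legitimate lift of $c$ realizing $d_k^\sharp[c] = [a]$.

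I do not expect a genuine obstacle: every step is forced once the previous one is in place. The only thing needing real care is the well-definedness of $d_k^\sharp$ and, throughout, the bookkeeping of which vector space each symbol inhabits and which injectivity or surjectivity fact is being used; the convention of writing $d$ for all three differentials streamlines the chases but one must keep the diagram commuting correctly. Once $d_k^\sharp$ is a well-defined linear map, the six exactness verifications are independent short chases of exactly the shape sketched above.
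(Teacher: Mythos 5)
Your proposal is correct and follows essentially the same route as the paper: the same construction of the connecting map via lift--differentiate--pull back along $i_{k+1}$, the same three well-definedness checks (closedness of $a$, independence of the lift $b$, independence of the representative $c$, the last handled by adjusting the lift so that $db$ is unchanged), and the same six one-inclusion-at-a-time diagram chases for exactness at the three kinds of spot. Nothing is missing.
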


We will use the letters
$\alpha$, $\beta$, $\gamma$, with appropriate subscripts and other
markers, to denote elements of $A$, $B$ and $C$, respectively.  For simplicity, we will 
write ``$d$'' for $d_k^A$, $d_k^B$, $d_k^C$, $d_{k+1}^A$, etc. 
Our first task is to define $d_k^\sharp[\gamma_k]$, where 
$[\gamma_k]$ is a class in $H^k(C)$. 

Since $j_k$ is surjective, we can find $\beta_k \in B^k$ such that 
$\gamma_k=j_k(\beta_k)$. Now, 
$$ j_{k+1}(d\beta_k) = d(j_k(\beta_k))=d(\gamma_k)=0,$$
since $\gamma_k$ was closed. Since $d\beta_k$ is in the kernel of $j_{k+1}$,
it must be in the image of $i_{k+1}$. Let $\alpha_{k+1}$ be such that 
$i_{k+1}(\alpha_{k+1})=d\beta_k$. Furthermore, $\alpha_{k+1}$ is unique, since
$i_{k+1}$ is injective. We define
$$ d_k^\sharp[\gamma_k] = [\alpha_{k+1}].$$

The construction of $[\alpha_{k+1}]$ from $[\gamma_k]$ is summarized in the following diagram:
$$ \begin{CD}
{} @. \beta_k @>{j_k}>> \gamma_k \\
@. @VVdV @. \\
\alpha_{k+1} @>{i_{k+1}}>> d\beta_k @. {}
\end{CD}
$$

For this definition to be well-defined, we must show that 
\begin{itemize} 
\item $\alpha_{k+1}$ is closed. However, 
$$ i_{k+2}(d \alpha_{k+1})  =  d (i_{k+1}\alpha_{k+1})  
= d( d\beta_k) = 0.
$$
Since $i_{k+2}$ is injective, $d \alpha_{k+1}$ must then be zero. 
\item The class $[\alpha_{k+1}]$ does not depend on which $\beta_k$ we chose,
so long as $j_k(\beta_k)=\alpha_k$. The argument in displayed in the diagram
$$ \begin{CD}
(\alpha_k) @. \beta'_k=\beta_k+i_k(\alpha_k) @>{j_k}>> \gamma_k \\
@. @VV{d}V @. \\
\alpha'_{k+1}=\alpha_{k+1}+d \alpha_k @>{i_{k+1}}>> d \beta_k' = d \beta_k +  d i_k\alpha_k @. 
\end{CD}
$$
To see this, suppose that we pick a different $\beta' \in B^k$ with 
$j_k(\beta'_k)=\gamma_k = j_k(\beta_k)$. Then $j_k(\beta'_k-\beta_k)=0$, so 
$\beta'_k-\beta_k$ must be in the image of 
$i_k$, so there exists $\alpha_k \in A^k$ such that $\beta'_k = \beta_k + i_k(\alpha_k)$. But then
$$ d \beta' = d \beta + d (i_k(\alpha_k)) = d \beta + i_{k+1}(d \alpha_k) = i_{k+1}(\alpha_{k+1} + d \alpha_k),$$
so $\alpha'_{k+1} = \alpha_{k+1}+d\alpha_k$. But then $[\alpha'_{k+1}]=[\alpha_{k+1}]$, as
required. 

\item The class $[\alpha_{k+1}]$ does not depend on which cochain $\gamma_k$
we use to represent the class $[\gamma_k]$. 
$$ \begin{CD}
{} @. \beta_{k-1} @>{j_{k-1}}>> \gamma_{k-1} \\
@. @. @.  \\
{} @. \beta_k + d  \beta_{k-1} @>{j_k}>> \gamma_k + d\gamma_{k-1} \\ 
@. @VV{d}V @. \\
\alpha_{k+1} @>{i_{k+1}}>> d \beta_k @. 
\end{CD} $$
Suppose $\gamma'_k = \gamma_k + d
\gamma_{k-1}$ is another representative of the same class. Then
there exists a $\beta_{k-1}$ such that $\gamma_{k-1}=j_{k-1}\beta_{k-1}$. 
But then 
$$
j_k(\beta_k + d\beta_{k-1})  =  
j_k(\beta_k) + j_k(d(\beta_{k-1}))  
=  \gamma_k + d(j_{k-1}\beta_{k-1})  
= \gamma_k + d\gamma_{k-1} = \gamma'.
$$
Thus we can take $\beta'_k=\beta_k+d\beta_{k-1}$. But then $d\beta'_k = d\beta_k$, and our 
cochain $\alpha_{k+1}$ is exactly the same as if we had worked with $\gamma$
instead of $\gamma'$. 
\end{itemize}

Before moving on to the rest of the proof of the Snake Lemma, let's stop and
see how this works for Mayer-Vietoris. 
\begin{enumerate}
\item Start with a class in $H^k(U \cap V)$, represented by a closed
form $\gamma_k\in\Omega^k(U\cap V)$. 
\item Pick $\beta_k=(\rho_V \gamma_k, -\rho_U\gamma_k)$, where $(\rho_U,\rho_V)$
is a partition of unity. 
\item $d\beta_k= (d(\rho_V \gamma_k), -d(\rho_U\gamma_k))$. This is zero outside
of $U \cap V$, since $\rho_V\gamma_k$ and $\rho_U\gamma_k$ were constructed to
be zero outside $U\cap V$. 
\item Since $\rho_U=1-\rho_V$ where both are defined, 
$d\rho_U = - d\rho_V$. Since $d\gamma_k=0$, we then have $-d(\rho_U\gamma_k)=d(\rho_V\gamma_k)$ on $U \cap V$. This means that the forms $d(\rho_V\gamma_k)$ on $U$ and 
$-d(\rho_U\gamma_k)$ on $V$ agree on $U\cap V$, and can be stitched together
to define a closed form $\alpha_{k+1}$ on all of $X$. That is, $d_k^\sharp[\gamma_k]$ is represented by the closed form
$$ d_k^*(\gamma_k) = \begin{cases} d(\rho_V \gamma_k) & \hbox{on $U$} \cr 
-d(\rho_U \gamma_k) & \hbox{on $V$,} \end{cases} $$
and the two definitions agree on $U \cap V$. 
\end{enumerate}

Returning to the Snake Lemma, we must show six inclusions:
\begin{itemize}

\item $Im(i_k^\sharp)\subset Ker(j_k^\sharp)$, i.e. that 
$j_k^\sharp i_k^\sharp[\alpha_k] = 0$ for any closed cochain $\alpha_k$.
This follows from 
$$j_k^\sharp(i_k^\sharp[\alpha_k]) = j_k^\sharp[i_k \alpha_k]=
[j_k(i_k(\alpha_k))]=0,$$
since $j_k \circ i_k=0$. 

\item $Im(j_k^\sharp) \subset Ker(d_k^\sharp)$, i.e. that 
$d_k^\sharp \circ j_k^\sharp=0$. If $[\beta_k]$ is a class in $H^k(B)$, 
then $j_k^\sharp[\beta_k]=[j_k\beta_k]$. To apply $d_k^\sharp$ to this, we must\\
(a) find a cochain in $B^k$ that maps to $j_k\beta_k$. Just take $\beta_k$ itself! \\ 
(b) Take $d$ of this cochain. That gives 0, since $\beta_k$ is closed. \\
(c) Find an $\alpha_{k+1}$ that maps to this. This is $\alpha_{k+1}=0$. 

\item $Im(d_{k}^\sharp)\subset Ker(i_{k+1}^\sharp)$. 
If $\alpha_{k+1}$ represents $d_k^\sharp[\gamma_k]$, then $i_{k+1}\alpha_{k+1}
= d\beta_k$ is exact, so $i_{k+1}^\sharp[\alpha_k]=0$. 

\item $Ker(j_k^\sharp) \subset Im(i_k^\sharp)$. If $[j_k \beta_k]=0$, then
$j_k \beta = d(\gamma_{k-1})$. Since $j_{k-1}$ is surjective, we can find
a $\beta_{k-1}$ such that $j_{k-1}\beta_{k-1}= \gamma_{k-1}$. But then 
$j_k(d\beta_{k-1})=d(j_{k-1}(\beta_{k-1}))=d\gamma_{k-1}=\gamma_k$. Since 
$j_k(\beta_k-d\beta_{k-1})=0$, we must have $\beta_k-d\beta_{k-1} = 
i_k(\alpha_k)$ for some $\alpha_k \in A^k$. Note that $i_{k+1}d\alpha_k =
d(i_k \alpha_k)=d\beta_k - d(d\beta_{k-1}) = 0$, since $\beta_k$ is closed. 
But $i_{k+1}$ is injective, so $d\alpha_k$ must be zero, so $\alpha_k$
represents a class $[\alpha_k] \in H^k(A)$. But then 
$i_k^\sharp[\alpha_k] = [i_k(\alpha_k)] = [\beta_k + d\beta_{k-1}]=[\beta_k]$,
so $[\beta_k]$ is in the image of $i_k^\sharp$. 

\item $Ker(d_k^\sharp) \subset Im(j_k^\sharp)$. Suppose that 
$d_k^\sharp[\gamma_k]=0$. This means that the $\alpha_{k+1}$ constructed to 
satisfy $i_{k+1}\alpha_{k+1} = d\beta_k$, where $\gamma_k=j_k(\beta_k)$, must 
be exact. That is, $\alpha_{k+1}=d\alpha_k$ for some $\alpha_k \in A^k$.
But then
$$ d\beta_k = i_{k+1}(\alpha_{k+1}) = i_{k+1}(d\alpha_k)=d(i_k(\alpha_k)).$$
Thus $\beta_k - i_k(\alpha_k)$ must be closed, and must represent a 
class in $H^k(B)$. But 
$$j_k^\sharp[\beta_k-i_k(\alpha_k)] = [j_k\beta_k - j_k(i_k(\alpha_k))]
= [j_k(\beta_k)] = [\gamma_k],$$
since $j_k\circ i_k = 0$. Thus $[\gamma_k]$ is in the image of $j_k^\sharp$.

\item $Ker(i_{k+1}^\sharp)\subset Im(d_k^\sharp)$. Let $\alpha_{k+1}$ be a closed
cochain in $A^{k+1}$, and suppose that $i_{k+1}^\sharp[\alpha_{k+1}]=0$.
This means that $i_{k+1}\alpha_{k+1}$ is exact, and we can find a 
$\beta_k \in B^k$ such that $i_{k+1}\alpha_{k+1} = d\beta_k$. Note that 
$d  j_k \beta_k = j_{k+1}d\beta_k = j_{k+1}(i_{k+1}(\alpha_{k+1}))=0$, so 
$j_k\beta_k$ is closed. But then $[j_k \beta_k] \in H^k(C)$ and 
$[\alpha_{k+1}] = d_k^\sharp[j_k\beta_k]$ is in the image of $d_k^\sharp$. 
\end{itemize}

\noindent{\bf VERY IMPORTANT Exercise 9:} Each of these arguments is called a 
``diagram chase'', in that it involves using properties at one spot of a 
commutative diagram to derive properties at an adjacent spot. 
For each of these arguments, draw an appropriate diagram to illustrate what is going on. (In the history of 
the universe, nobody has fully understood the Snake Lemma without drawing 
out the diagrams himself or herself.)
   
\section{Using Mayer-Vietoris}

Our first application of Mayer-Vietoris will be to determine $H^k(S^n)$  for all $k$ and $n$. We begin with the 1-sphere, whose
cohomology we already computed using other methods. Here we'll compute it in detail using Mayer-Vietoris, as an example of 
how the machinery works. 

Let $S^1$ be the unit circle embedded in $\R^2$. Let $U=\{ (x,y) \in S^1 | y < 1/2\}$ and let $V = \{ (x,y) \in S^1 | y > -1/2\}$. 
The open sets $U$ and $V$ are both diffeomorphic to $\R$, so $H^0(U)=H^0(V)=\R$ and $H^k(U)=H^k(V)=0$ for $k>0$. 
$U \cap V$ consists of two intervals, one with $x>0$ and the other with $x<0$. Each is diffeomorphic to the line, so 
$H^0(U \cap V) = \R^2$ and $H^k(U \cap V)= 0$ for $k>0$. The Mayer-Vietoris sequence
$$ 0 \to H^0(S^1) \to H^0(U) \oplus H^0(V) \to H^0(U \cap V) \to H^1(S^1) \to H^1(U)\oplus H^1(V) \to \cdots$$
simplifies to 
$$ 0 \to H^0(S^1) \xrightarrow {i_0^\sharp} \R^2 \xrightarrow {j_0^\sharp} \R^2 \xrightarrow {d_0^\sharp}
H^1(S^1) \xrightarrow {j_1^\sharp} 0 $$
\begin{enumerate}
\item Since $S^1$ is connected, $H^0(S^1)=\R$. 
\item Since $i_0^\sharp$ is injective, the image of $i_0^\sharp$ is 1-dimensional. 
\item This makes the kernel of $j_0^\sharp$ 1-dimensional, so $j_0^\sharp$ has rank 1. 
\item This makes the kernel of $d_0^\sharp$ 1-dimensional, so $d_0^\sharp$ has rank 1. 
\item This make $H^1(S^1)$ 1-dimensional, and we conclude that $H^1(S^1)=H^0(S^1)=\R$ (and $H^k(S^1)=0$ for
$k>1$ since $S^1$ is only 1-dimensional).
\end{enumerate}

What's more, we can use Mayer-Vietoris to find a generator of $H^1(S^1)$. We just take an element of $H^0(U\cap V)$
that is not in the image of $j_0^\sharp$ and apply $d_0^\sharp$ to it.   In fact, let's see what all the maps in the Mayer-Vietoris
sequence are.

$H^0(S^1)$ and $H^0(U)$ and $H^0(V)$ are each generated by the constant function 1. Restricting 1 from $X$ to $U$ or from 
$X$ to $V$ gives a function that is 1 on $U$ (or $V$), and restricting 1 from $U$ to $U\cap V$, or from $V$ to $U \cap V$, gives a function that is 1 on both components of $U \cap V$. Thus 
$$i_0^\sharp(s)  = (s,s) \qquad j_0^\sharp(s,t)  =  (s-t, s-t).$$
Now consider $(1,0) \in H^0(U \cap V)$. This is the function $\gamma_0$ that is 1 on one 
component of $U\cap V$ (say, the piece with
$x>0$) and 0 on the other component. Now let $\rho_U$ be a smooth function that is 1 for $y<-1/10$ and is 0 for $y>1/10$, and 
let $\rho_V = 1-\rho_U$. Then $\rho_V \gamma_0$ is a function on $U$ that is 
\begin{itemize}
\item Equal to $\rho_V$ on the part of $U \cap V$ with $x>0$. 
\item Equal to 0 on the part of $U \cap V$ with $x < 0$. 
\item Equal to 0 on the rest of $U$ since there $\rho_V=0$.
\end{itemize}
Similarly, $-\rho_U \gamma$ is a function on $V$ that is only nonzero on the part of $U\cap V$ where $x>0$.  We then
have $\alpha_1$ is a 1-form that is 
\begin{itemize}
\item Equal to $d\rho_V = -d\rho_U$ on the part of $U \cap V$ with $x>0$, and 
\item Equal to $0$ everywhere else. 
\end{itemize}
Since $\rho_V$ increases from 0 to 1 as we move counterclockwise along this interval, $\int_{S^1} \alpha_1 = 1$.  
In fact, the support of 
$\alpha_1$ is only a small part of $U\cap V$, and is included in the region where $x>0$ and $-1/10 \le y \le 1/10$. This is  
called a ``bump form'', in analogy with bump functions. 

 Now we compute the cohomology of the $n$-sphere $S^n$. 
 Let $S^n$ be the unit sphere in $\R^{n+1}$ and let $U$ and $V$ be the portions of that sphere with $x_{n+1} <1/2$
 and with $x_{n+1} > -1/2$. Each of $U$ and $V$ is diffeomorphic to $\R^n$, so $H^k(U)=H^k(V)=\R$ for $k=0$ and 
 $H^k(U)=H^k(V)=0$ otherwise. $U\cap V$ is a strip around the equator and is diffeomorphic to $\R \times S^{k-1}$,
 and so has the same cohomology as $S^{n-1}$.  Since $H^k(U)=H^k(V) =0$ for $k>0$, the sequence  
 $$  H^k(U)\oplus H^k(V) \to H^k(U\cap V) \to H^{k+1}(S^n) \to H^{k+1}(U) \oplus H^{k+1}(V) $$
 is 
 $$ 0 \to H^k(S^{n-1}) \to H^{k+1}(S^n) \to 0,$$
 so $H^{k+1}(S^n)$ is isomorphic to $H^k(S^{n-1})$.  By induction on $n$, this shows that $H^k(S^n)=\R$ when $k=n$ and
 is 0 when $0<k \ne n$. Furthermore, the generator of $H^n(S^n)$ can be realized as a bump form, equal to 
 $d \rho_V$ wedged with the generator of $H^{n-1}(S^{n-1})$, which in turn is a bump 1-form wedged with the 
 generator of $H^{n-1}(S^{n-2})$. Combining steps, this gives a bump $n$-form of total integral 1, localized near the point
 $(1,0,0,\ldots,0)$.    
 
 \smallskip
 
 \noindent {\bf Exercise 10:}  Let $T = S^1 \times S^1$ be the 2-torus. By dividing one of the $S^1$ factors circle two (overlapping) 
 open sets, we can divide $T$ into two cylinders $U$ and $V$, such that $U\cap V$ is itself the disjoint union of two cylinders. 
 Use this partition and the Mayer-Vietoris sequence to compute the cohomology of $X$. Warning:  unlike with the circle, 
 the dimensions of $H^k(U)$, $H^k(V)$ and $H^k(U\cap V)$ are not enough to solve this problem. You have to actually 
 study what $i_k^\sharp$, $j_k^\sharp$ and/or $d_k^\sharp$ are doing. [Note:  $H^1(\R \times S^1)=\R$, by integration
 over the fiber. This theorem also implies that the generator of $H^1(\R \times S^1)$ is just the pullback of a generator of $H^1(S^1)$ to  $\R \times S^1$. You should be able to explicitly write generators for $H^1(U)$, $H^1(V)$ and $H^1(U\cap V)$
 and see how the maps $i_1^\sharp$ and $j_1^\sharp$ behave.]
 
 \smallskip
 
 \noindent {\bf Exercise 11:} Let $K$ be a Klein bottle. Find open sets $U$ and $V$ such that $U \cup V = K$, such that 
 $U$ and $V$ are cylinders, and such that $U \cap V$ is the disjoint union of two cylinders. In other words, the exact same
 data as with the torus $T$. The difference is in the ranks of some of the maps. Use Mayer-Vietoris to compute the cohomology
 of $K$. 
 
 \section{Good covers and the Mayer-Vietoris argument}
 
 A set is {\em contractible} if it deformation retracts to a single point, in which case it has the same cohomology as a single 
 point, namely $H^0=\R$ and $H^k=0$ for $k>0$. In the context of $n$-manifolds, an open contractible set is something
 diffeomorphic to $\R^n$. 
 
 \smallskip
 
 \noindent{\bf Exercise 12:} Suppose that we are on a manifold with a Riemannian metric, so that there is well-defined notion
 of geodesics. Suppose furthermore that we are working on a region on which geodesics are unique: there is one and only one geodesic from a point $p$ to a point $q$. An open submanifold $A$ is called {\em convex} if, for any two points $p,q \in A$, the geodesic from $p$ to $q$ is entirely in $A$. Show that a convex submanifold is contractible. 
 
 \smallskip
 
 \noindent {\bf Exercise 13:} Show that the (non-empty) intersection of any collection of convex sets is convex, and hence 
 contractible. 
 
 \medskip
 
 A {\em good cover} of a topological space is an open cover $\{U_i\}$ such that any finite intersection
 $U_{i_1} \cap U_{i_1} \cap \cdots \cap U_{i_k}$ is either empty or contractible. 
 
 \begin{thm} Every compact manifold $X$ admits a finite good cover.  \end{thm}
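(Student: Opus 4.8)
The plan is to build the good cover using Riemannian geometry, reducing everything to the two exercises about convex sets that immediately precede this theorem.

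First I would equip $X$ with a Riemannian metric. Since $X$ is compact it embeds in some $\R^N$ (or is already presented as a submanifold of one), and one can simply restrict the Euclidean metric; alternatively, one patches local metrics together with a partition of unity. Either way $X$ becomes a Riemannian manifold, so that geodesics, geodesic balls, and the notion of convexity from Exercise~12 all make sense.

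The crucial geometric input — and the step I expect to be the main obstacle — is the classical theorem of Whitehead that every point $p \in X$ has arbitrarily small open neighborhoods $U$ that are \emph{geodesically convex} in the strong sense Exercise~12 requires: there is a region around $p$ on which geodesics between points are unique, and inside a small enough $U$ any two points are joined by a (unique, minimizing) geodesic lying entirely in $U$. Establishing this is the only place where genuine Riemannian geometry is needed; the argument runs through a careful study of the exponential map — for small $r$ the ball $B_r(p)$ is the diffeomorphic image under $\exp_p$ of a Euclidean ball, geodesics depend smoothly on their endpoints, and (the delicate estimate) for $r$ small the function $q \mapsto d(q_0,q)^2$ is strictly convex along geodesics inside $B_r(p)$, which forces any such geodesic to remain in the ball. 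Everything after this is soft.

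Granting the existence of such convex neighborhoods, the rest is immediate. By Exercise~12 each convex neighborhood is contractible. Cover $X$ by convex neighborhoods $\{U_\alpha\}$; since $X$ is compact, finitely many of them, say $U_1,\dots,U_m$, already cover $X$. For any sub-collection $U_{i_1},\dots,U_{i_k}$, Exercise~13 tells us that the intersection $U_{i_1}\cap\cdots\cap U_{i_k}$ is either empty or convex, and in the latter case Exercise~13 also gives that it is contractible. Hence $\{U_1,\dots,U_m\}$ is a finite good cover of $X$, as desired.
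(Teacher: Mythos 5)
Your proof is correct and follows essentially the same route as the paper: put a Riemannian metric on $X$ (via an embedding or a partition of unity), cover $X$ by geodesically convex neighborhoods, invoke the two preceding exercises to see that all nonempty finite intersections are convex and hence contractible, and extract a finite subcover by compactness. You are in fact more careful than the paper at the one genuinely nontrivial point, explicitly flagging and sketching Whitehead's theorem on the existence of convex geodesic neighborhoods, which the paper simply asserts.
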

 
 \begin{proof} First suppose that $X$ has a Riemannian metric. Then each point has a convex geodesic (open) 
 neighborhood. Since the 
 intersection of two (or more) convex sets is convex, these neighborhoods form a good cover. Since $X$ is compact, there
 is a finite sub-cover, which is still good. 
 
If $X$ is embedded in $\R^N$, then the Riemannian structure comes from the embedding. 
The only question is how to get a Riemannian metric when $X$ is an {\em abstract} manifold. To do this, partition $X$ into
coordinate patches. Use the Riemannian metric on $\R^n$ on each patch. Then stitch them together using partitions
of unity. Since any positive linear combination of inner products still satisfies the axioms of an inner product, this gives a 
Riemannian metric for $X$.
\end{proof}

We next consider how big the cohomology of a manifold $X$ can be. $H^k(X)$ is the quotient of two infinite-dimensional vector
spaces. Can the quotient be infinite-dimensional? 

If $X$ is not compact, it certainly can. For example, consider the connected sum of an infinite sequence of tori. $H^1$ of such
a space would be infinite-dimensional. However, 

\begin{thm} If $X$ is a compact $n$-manifold, then each $H^k(X)$ is finite-dimensional. \end{thm}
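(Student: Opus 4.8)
The plan is to run the \emph{Mayer-Vietoris argument}: an induction on the number of open sets in a finite good cover, which exists by the preceding theorem. Since the open sets that appear at intermediate stages of the induction are typically not compact, I would prove the formally stronger statement that \emph{any} manifold (compact or not) admitting a good cover $\{U_1,\dots,U_p\}$ with $p$ sets has $H^k$ finite-dimensional for every $k$, and then specialize to compact $X$ at the very end.

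First I would dispose of the base case: if $p \le 1$ then the manifold is either empty or a single contractible set, so its cohomology is that of a point (or is zero), which is finite-dimensional. For the inductive step, assume the statement for all good covers of size at most $p-1$ and suppose $X$ has the good cover $\{U_1,\dots,U_p\}$. Set $U = U_1 \cup \cdots \cup U_{p-1}$ and $V = U_p$, so that $X = U \cup V$ with $U$ and $V$ open submanifolds. The one point requiring care is the bookkeeping on covers: $U$ has the good cover $\{U_1,\dots,U_{p-1}\}$ of size $p-1$; $V$ is covered by itself, a good cover of size $1$; and $U \cap V$ has the cover $\{U_i \cap U_p : 1 \le i \le p-1\}$ of size $p-1$, which is good because a finite intersection of the listed sets is a finite intersection of members of the original good cover of $X$, hence empty or contractible. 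By the inductive hypothesis, $H^k(U)$, $H^k(V)$, and $H^k(U \cap V)$ are all finite-dimensional.

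Next I would feed this into the Mayer-Vietoris sequence. Exactness of
\be H^{k-1}(U \cap V) \xrightarrow{d_{k-1}^\sharp} H^k(X) \xrightarrow{i_k^\sharp} H^k(U) \oplus H^k(V) \ee
shows that $\operatorname{Ker}(i_k^\sharp) = \operatorname{Im}(d_{k-1}^\sharp)$ is a quotient of the finite-dimensional space $H^{k-1}(U\cap V)$, while $H^k(X)/\operatorname{Ker}(i_k^\sharp) \cong \operatorname{Im}(i_k^\sharp)$ is a subspace of the finite-dimensional space $H^k(U)\oplus H^k(V)$. Hence $H^k(X)$ is finite-dimensional, with $\dim H^k(X) \le \dim H^{k-1}(U\cap V) + \dim H^k(U) + \dim H^k(V)$, which closes the induction. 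Applying the stronger statement to a compact $X$, which admits a finite good cover by the previous theorem, proves the claim.

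I do not expect a serious obstacle: once the good-cover machinery and the Mayer-Vietoris sequence are in hand, the argument is purely formal. The delicate part is entirely in the setup --- phrasing the induction in terms of ``admits a finite good cover'' rather than ``is compact'' (because $U$ and $U\cap V$ are merely open, hence possibly noncompact, submanifolds), and verifying that $U\cap V$ inherits a good cover no larger than the one used on $U$.
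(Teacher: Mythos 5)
Your proposal is correct and follows essentially the same route as the paper: induction on the size of a finite good cover, taking $U = U_1\cup\cdots\cup U_{p-1}$ and $V=U_p$, and extracting finite-dimensionality of $H^k(X)$ from exactness of $H^{k-1}(U\cap V)\to H^k(X)\to H^k(U)\oplus H^k(V)$. Your explicit check that $\{U_i\cap U_p\}$ is again a good cover, and the resulting dimension bound, are nice touches the paper leaves implicit, but the argument is the same.
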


\begin{proof} The proof is by induction on the number of sets in a good cover. 

\begin{itemize}
\item If a manifold $X$ admits a good cover with a single set $U_1$, then $X$ is either contractible or empty, so $H^0(X)=\R$ or 0 and all other cohomology groups are trivial. 
\item Now suppose that all manifolds (compact or not) that admit open covers with at most $m$ elements have finite-dimensional 
cohomology, and suppose that $X$ admits a good cover $\{U_1, \ldots, U_{m+1}\}$. Let $U=U_1 \cup \cdots U_m$ and
let $V=U_{m+1}$.  But then $\{U_1\cap U_{m+1}, U_2\cap U_{m+1}, \cdots, 
U_m\cap U_{m+1}$ is a good cover for $U\cap V$ with $m$ elements, so the cohomologies of $U$, $V$ and $U\cap V$ 
are finite-dimensional.
\item The Mayer-Vietoris sequence says that $H^{k-1}(U \cap V) \xrightarrow {d_{k-1}^\sharp} H^{k}(X) \xrightarrow {i_{k}^\sharp} H^k(U) \oplus H^k(V)$ is exact. However, $H^{k-1}(U\cap V)$ and $H^k(U) \oplus H^k(V)$ are finite 
dimensional, since $U\cap V$, $U$ and $V$ all admit good covers with at most $m$ elements. Thus $H^k(X)$ must also 
be finite-dimensional. 
\item By induction, all manifolds with finite good covers have finite-dimensional cohomologies.
\item Since all compact manifolds have finite good covers, all compact manifolds have finite-dimensional cohomologies.
\end{itemize}
\end{proof}

This proof was an example of the {\em Mayer-Vietoris argument}. In general, we might want to prove that all 
spaces with finite good covers have a certain property $P$. Examples of such properties include finite-dimensional cohomology, 
Poincare duality (between de Rham cohomology and something called ``compactly supported cohomology''), the Kunneth formula for cohomologies of product spaces, and the isomorphism between de Rham cohomology and singular cohomology with 
real coefficients. The steps of the argument are the same for all of these theorems:
\begin{enumerate}
\item Show that every contractible set has property $P$. 
\item Using the Mayer-Vietoris sequence, show that if $U$, $V$, and $U\cap V$ have property $P$, then so does
 $U \cup V$. 
 \item Proceeding by induction as above, showing that all manifolds with finite good covers have property $P$. 
\item Conclude that all compact manifolds have property $P$. 
\end{enumerate}
For lots of examples of the Mayer-Vietoris principle in action, see Bott and Tu's excellent book {\em Differential forms in algebraic
topology}.

\chapter{Top cohomology, Poincare duality, and degree}

\section{Compactly supported cohomology}

Integration is a pairing between compactly supported forms and
oriented manifolds. Given an oriented manifold $X$ and a compactly 
supported $n$-form
$\omega$, we compute $\int_X\omega$. Of course, if $X$ is compact,
then {\em every} form on $X$ is compactly supported. Also, if $X$ is
not compact and $\omega$ is not compactly supported, then we can often
compute $\int_X \omega$ via limits.  But at its core, integration is
about compactly supported forms.

As such, it makes sense to analyze the cohomology of compactly
supported forms. Let $\Omega_c^k(X)$ denote the vector space of
compactly supported $k$-forms on the $n$-manifold $X$. Since $d$ of a
compactly supported form is compactly supported, we have a complex:
$$ 0 \to \Omega_c^1(X) \xrightarrow d \Omega_c^2(X) \xrightarrow d \Omega_c^3(X) \xrightarrow d \cdots 
\xrightarrow d \Omega_c^n(X) \to 0,$$ and we define $H_c^k(X)$ to be
the $k$-th cohomology of this complex. That is,
$$ H^k_c(X) = \frac{\hbox{Closed, compactly supported $k$-forms on }X}{d\hbox{(Compactly supported $(k-1)$-forms on }X)}.$$
As with ordinary (de Rham) cohomology, we start by studying the
cohomology of $\R^n$.

\begin{thm} $H^k_c(\R^n)=\R$ if $k=n$ and 0 otherwise. \end{thm}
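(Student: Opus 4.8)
The plan is to reduce the statement, by induction on $n$, to a compactly supported analogue of the integration-over-a-fiber theorem proved earlier. Precisely, I would first establish: for any manifold $X$, the map $\pi_*\colon H_c^k(\R\times X)\to H_c^{k-1}(X)$ obtained by ``integrating out the $\R$-variable'' is an isomorphism in every degree $k$, with the convention $H_c^{-1}(X)=0$ (so that in degree zero it asserts $H_c^0(\R\times X)=0$, consistent with $\R\times X$ being noncompact). The crucial contrast with the non-compact case is that for ordinary cohomology fiber-pullback preserves degree, whereas for compact supports fiber-integration \emph{lowers} degree by one; this shift is exactly what moves the top class from $\R^{n-1}$ up to $\R^n$.

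To construct $\pi_*$, write a compactly supported $k$-form on $\R\times X$ uniquely as $\alpha=dt\wedge\beta+\gamma$ with $\beta,\gamma$ containing no $dt$, expand $\beta=\sum_J\beta_J(t,x)\,dx^J$, and set $\pi_*\alpha=\sum_J\big(\int_{-\infty}^{\infty}\beta_J(t,x)\,dt\big)\,dx^J$, which is a compactly supported $(k-1)$-form on $X$ because each $\beta_J$ has compact support. A computation parallel to the one in the earlier fiber theorem shows $\pi_* d=\pm d\pi_*$ (the boundary terms $[\gamma]_{t=-\infty}^{t=+\infty}$ vanish by compact support), so $\pi_*$ descends to cohomology. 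For a one-sided inverse, fix once and for all a bump function $e(t)$ on $\R$ with compact support and $\int_{-\infty}^{\infty}e\,dt=1$, and define $e_*\colon\Omega_c^{k-1}(X)\to\Omega_c^{k}(\R\times X)$ by $e_*\mu=e(t)\,dt\wedge\mu$ (with $\mu$ pulled back to $\R\times X$); then $\pi_* e_*=\mathrm{id}$ on the nose.

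The remaining point is that $e_*\pi_*$ induces the identity on $H_c^k(\R\times X)$, which I would prove by constructing a homotopy operator $K\colon\Omega_c^k(\R\times X)\to\Omega_c^{k-1}(\R\times X)$ satisfying $\mathrm{id}-e_*\pi_*=\pm(dK+Kd)$. With $\alpha=dt\wedge\beta+\gamma$ as above the formula is
\[ K\alpha=\sum_J\Big(\int_{-\infty}^{t}\beta_J(s,x)\,ds-\big(\textstyle\int_{-\infty}^{t}e(s)\,ds\big)\int_{-\infty}^{\infty}\beta_J(s,x)\,ds\Big)\,dx^J. \]
The correction term built from $e$ is precisely what makes $K\alpha$ compactly supported: as $t\to+\infty$ the two integrals cancel, and as $t\to-\infty$ both vanish. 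Verifying $\mathrm{id}-e_*\pi_*=\pm(dK+Kd)$ is then the same bookkeeping as in the earlier fiber theorem, with the signs pinned down from $d(dt\wedge\beta)=-dt\wedge d\beta$.

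Granting the fiber theorem, the conclusion follows by induction on $n$. The base case $n=0$ is immediate: $\R^0$ is a single point, $\Omega_c^0$ is $\R$ with no forms of positive degree and no differentials, so $H_c^0(\R^0)=\R$ and $H_c^k(\R^0)=0$ for $k\ne0$. For the inductive step, write $\R^n=\R\times\R^{n-1}$ and apply the theorem to get $H_c^k(\R^n)\cong H_c^{k-1}(\R^{n-1})$, which by the inductive hypothesis equals $\R$ when $k-1=n-1$, i.e.\ $k=n$, and $0$ otherwise (for $k>n$ both sides are $0$ trivially, there being no forms of degree $>n$ on $\R^n$). I expect the main obstacle to be the third paragraph: getting the homotopy operator $K$ and the identity $\mathrm{id}-e_*\pi_*=\pm(dK+Kd)$ exactly right, since the only substantive difference from the ordinary Poincar\'e lemma is the need to keep every form compactly supported, which is what the bump-function correction term accomplishes.
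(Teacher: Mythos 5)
Your proof is correct and follows essentially the same route as the paper: your $\pi_*$, $e_*$, and $K$ are the paper's $j$, $i$, and $P$ (up to the ordering of the factors $\R\times X$ versus $X\times\R$ and some signs), and the paper likewise leaves the verification of $1-e_*\pi_*=\pm dK\pm Kd$ as bookkeeping. The only difference is that you run the induction cleanly from the base case $n=0$, whereas the paper first works out $n=1$ and $n=2$ by hand before invoking the same fiber-integration machinery for larger $n$.
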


\begin{proof} I will treat the cases $n=0$, $n=1$ and $n=2$ by hand,
  and then show how to get all larger values of $n$ by induction.

If $n=0$, then $\R^n$ is compact, and $H^0_c(\R^0)=H^0(\R^0)=\R$. 

If $n=1$, then $H^0_c$ consists of compactly supported constant
functions. But a constant function is only compactly supported if the
constant is zero! Thus $H^0_c(\R)=0$. As for $H^1_c$, suppose that
$\alpha$ is a 1-form supported on the interval $[-R,R]$. Let $f(x) =
\int_{-R}^x \alpha$. Then $\alpha =df$, and $f$ is the only
antiderivative of $\alpha$ that is zero for $x<-R$.
Meanwhile, $f(x)=0$ for $x>R$ if and only if $\int_\R \alpha =0$, so 
$\alpha$ is $d$ of a compactly supported function if and
only if $\int_\R \alpha = 0$, and
$$ H^1_c(\R) = \frac{\hbox{All compactly supported 1-forms on $\R$}}{\hbox{Compactly supported 1-forms on $\R$
with integral zero.}} = \R.$$

If $n=2$, then $H^0_c(\R^2)$ is trivial, since the only compactly
supported constant function is zero. We also have that
$H^1_c(\R^2)=0$, since if $\alpha$ is a closed 1-form supported on a
(closed subset of a) ball of radius $R$ around the origin, and if $p$
is a point outside that ball, then $\alpha = df$, where $f(x) =
\int_p^x \alpha$. (This integral doesn't depend on the path chosen
from $p$ to $x$ because $\alpha$ is closed and $\R^2$ is simply
connected.)  The function $f$ is supported on the ball of radius $R$,
since if $|x|>R$, then there is a path from $p$ to $x$ that avoids the
support of $\alpha$ altogether.

The tricky thing is showing that a compactly supported 2-form $\beta$
on $\R^2$ is $d$ of a compactly supported 1-form if and only if
$\int_{\R^2} \beta = 0$.

The ``only if'' part is just Stokes' Theorem. If $\beta = d\gamma$,
with $\gamma$ compactly supported, then $\int_{\R^2} \beta =
\int_{\R^2} d\gamma = \int_{\partial\R^2}\gamma= 0$.

To prove the other implication, suppose that $\beta = b(x,y) dx \wedge
dy$ is a compactly supported 2-form of total integral zero, say
supported on a closed subset of the square $[-R,R] \times [-R,R]$ for
some $R>1$.  We define a number of useful functions and forms as
follows:
\begin{itemize}
\item Let $f(s)$ be a smooth function of $\R$ with $f(s)=1$ for $s \ge
  1$ and $f(s)= 0$ for $s \le 0$. Then $df = f'(s) ds$ is a bump 1-form,
  supported on $[0,1]$, of total integral 1.
\item Let $B(x) = \int_{-R}^R b(x,y) dy$. Note that $B$ is compactly
  supported and that $\int_\R B dx = 0$. By our 1-dimensional analysis,
there is a
  compactly supported function $G(x)$ such that $dG=B(x)dx$.
\item Let $\tilde \beta =B(x) f'(y) dx \wedge dy$. Note that this is 
$d$ of $G(x) f'(y) dy$, which in turn is a compactly supported
1-form. 
\item Now let $C(x,y) = \int_{-R}^y (b(x,s) - B(x) f'(s)) ds$. This is 
compactly supported since $\int_{-R}^R (b(x,s) -f'(s)B(x)) ds = 0$. 
\item $d (-C(x,y) dx) = \frac{\partial C}{\partial y} dx \wedge dy 
= (b(x,y)-B(x)f'(y)) dx \wedge dy = \beta - \tilde \beta$.  
\item Since both $\tilde \beta$ and $\beta - \tilde \beta$ can be written 
as $d$ of a compactly supported 1-form, so can $\beta$. 
\end{itemize}

To go beyond $n=2$, we need a variant on the integration-over-a-fiber
argument that we previously used to get the Poincare Lemma. 
We want to compare the cohomologies of $X$ and $X \times \R$.\footnote{In this
construction we work with $X \times \R$ rather than $\R \times X$ to simplify
the signs in some of our computations.} We will
construct maps
$$i_k: \Omega^k_c(X) \to \Omega^{k+1}_c(X \times \R); \qquad j_{k+1}: \Omega_c^{k+1}(X \times \R) \to \Omega^k_c(X)$$
and a {\em homotopy operator} $P_{k+1}: \Omega^{k+1}_c(X \times \R) \to \Omega^k_c(X \times \R)$ such that 
\begin{eqnarray}
d \circ i &=&  i \circ d \cr
d \circ j & = &  j \circ d \cr 
j \circ i & = & 1 \cr 
(1-i\circ j) &=& \pm dP  \pm Pd,
\end{eqnarray}
where we have suppressed the subscripts on $i_k$, $j_k$, $d_k$, and
$P_k$ and the identity map 1, and where the signs in the last equation
may depend on $k$.  The first line implies that $i$ induces a map
$i^\sharp: H^k_c(X) \to H^{k+1}_c(X \times \R)$, and the second that
$j$ induces a map $j^\sharp: H^{k+1}_c(X \times \R) \to H^k_c(X)$. The
third line implies that $j^\sharp \circ i^\sharp$ is the identity, and
the fourth implies that $i^\sharp \circ j^\sharp$ is also the
identity. Thus $i^\sharp$ and $j^\sharp$ are isomorphisms, and
$H^{k+1}_c(X \times \R) = H^k_c(X)$. In particular,
$H^{k+1}_c(\R^{n+1}) = H^k_c(\R^n)$, providing the inductive step of
the proof of our theorem.

If $\alpha = \sum \alpha_I(x) dx^I$ is a compactly supported $k$-form
on $X$, let 
$$i(\alpha) = \sum_I \alpha_I(x) f'(s) dx^I \wedge ds =
(-1)^k \sum_I f'(s) \alpha_I(x) ds \wedge dx^I,$$ 
where $f'(s)ds$ is a
bump form on $\R$ with integral 1.  Let $\phi$ be the pullback of this
bump form to $X \times \R$.  Another way of writing the formula for
$i$ is then
$$i(\alpha) = \pi_1^*(\alpha) \wedge \phi,$$
where $\pi_1$ is the natural projection from $X \times \R$ to $X$. We check that 
$$i(d\alpha) = \pi_1^*(d\alpha) \wedge \phi = d(\pi_1^*(\alpha) \wedge \phi),$$
since $d \phi =0$. 

Next we define $j$. Every compactly supported $k$-form $\alpha$ on $X \times \R$ can be written as a sum of two pieces:
$$ \alpha = \sum_I \alpha_I(x,s) dx^I + \sum_J \gamma_J(x,s) dx^J \wedge ds,$$
where each $I$ is a $k$-index and each $J$ is a $(k-1)$-index. We define
$$j(\alpha) = \sum_J \left ( \int_{-\infty}^\infty \gamma(x,s) ds \right ) dx^J.$$
Note that 
\begin{eqnarray} 
j(d\alpha) & = & j\left  (\sum_{I,j} \partial_j \alpha_I(x,s) dx^j \wedge dx^I + \sum_I \partial_s \alpha_I(x,s) ds \wedge dx^I
+ \sum_{j,J} \partial_j \gamma_J(x,s) dx^j \wedge dx^J \wedge ds \right )\cr 
& = & (-1)^k \sum_I \left ( \int_{-infty}^\infty \partial_s \alpha_I(x,s) ds \right ) dx^I + \sum_{j,J}
\left ( \int_{-\infty}^\infty \partial_j \gamma_J(x,s)\right ) dx^j \wedge dx^J \cr 
&=& 0 + \sum_{j,J} \partial_j \left ( \int_{-\infty}^\infty \gamma_J(x,s) \right ) dx^j \wedge dx^J \cr 
& = & d(j(\alpha)),
\end{eqnarray}
where we have used the fact that $\alpha_I(x,s)$ is compactly
supported, so $\int_{-\infty}^\infty \partial_s \alpha_I(x,s) ds =
\alpha_I(x,\infty)-\alpha_I(x,-\infty)=0.$ In the computation of
$H^2_c(\R^2)$, the form $\tilde \beta$ was precisely $i\circ
j(\beta)$.

Now, for $\alpha \in \Omega^k_c(X \times \R)$, let
$$ P(\alpha)(x,s) = \sum_J \left ( \int_{-\infty}^s \gamma_J(x,t) dt  - f(s) \int_{-\infty}^\infty \gamma_J(x,t) dt \right ) dx^J.$$ 
This gives a compactly supported form, since for $s$ large and
positive the two terms cancel, while for $s$ large and negative both
terms are zero.

\smallskip

\noindent {\bf Exercise 1:} For an arbitrary form $\alpha \in
\Omega^{k}_c(X \times \R)$, compute $d(P( \alpha))$ and $P(d\alpha)$,
and show that $\alpha - i(j(\alpha)) = \pm dP(\alpha) \pm P(d\alpha)$.

\smallskip

\end{proof}

\section{Computing the top cohomology of compact manifolds}

Having established the basic properties of compactly supported forms
on $\R^n$, and hence compactly supported forms on a coordinate patch,
we consider $H^n(X)$.

\begin{thm} Let $X$ be a compact, connected $n$-manifold. 
$H^n(X) = \R$ if $X$ is orientable and is 0 if $X$ is not orientable. \end{thm}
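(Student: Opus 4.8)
The plan is to reduce to compactly supported cohomology and then lean on the theorem just proved. Since $X$ is compact, $\Omega^k(X) = \Omega^k_c(X)$ for every $k$, so $H^n(X) = H^n_c(X)$, and everything will follow from the statement $H^n_c(\R^n) = \R$ together with the following key lemma: \emph{if $Y$ is a connected oriented $n$-manifold (not necessarily compact), then integration $\int_Y \colon H^n_c(Y) \to \R$ is an isomorphism.} Applied to $Y = X$ in the orientable case, this immediately gives $H^n(X) = H^n_c(X) \cong \R$, so the real work is the lemma and the non-orientable case.

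To prove the key lemma, surjectivity is easy: a bump $n$-form supported in a single oriented coordinate ball can be rescaled to have integral $1$. For injectivity, suppose $\omega \in \Omega^n_c(Y)$ (automatically closed) has $\int_Y \omega = 0$; I must produce a compactly supported $\eta$ with $\omega = d\eta$. Cover the compact set $\operatorname{supp}(\omega)$ by finitely many coordinate balls $B_1,\dots,B_m$, each via an orientation-preserving chart, pick a subordinate partition of unity $\{\rho_i\}$, and write $\omega = \sum_i \rho_i\omega$ with $\rho_i\omega$ supported in $B_i$ and $c_i := \int_Y \rho_i\omega$. Fix once and for all a bump form $\mu$ supported in $B_1$ with $\int_Y\mu = 1$. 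The core claim is that $\rho_i\omega - c_i\mu = d\eta_i$ for some compactly supported $(n-1)$-form $\eta_i$; summing over $i$ then gives $\omega - \bigl(\sum_i c_i\bigr)\mu = \omega - \bigl(\int_Y\omega\bigr)\mu = \omega$ as $d$ of a compactly supported form. To prove the claim I use connectedness of $Y$ to pick a chain of balls $B_i = B_{j_0}, B_{j_1}, \dots, B_{j_r} = B_1$ with consecutive members overlapping. Inside any one ball $B$ (diffeomorphic to $\R^n$), the theorem $H^n_c(\R^n) = \R$, with integration as the isomorphism, says two compactly supported $n$-forms in $B$ with the same integral differ by $d$ of a compactly supported $(n-1)$-form in $B$; hence a bump of mass $c$ supported in $B_{j_a}$ can be ``slid'' to one supported in $B_{j_{a+1}}$ by routing it through a small ball inside the overlap $B_{j_a}\cap B_{j_{a+1}}$, at the cost of an exact, compactly supported term. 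Chaining these slides moves $\rho_i\omega$ (mass $c_i$, supported in $B_i$) to $c_i\mu$ (supported in $B_1$) modulo compactly supported exact forms, proving the claim and the key lemma.

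For the non-orientable case, pass to the orientation double cover $\pi \colon \widetilde X \to X$ with its orientation-reversing deck involution $\tau$; because $X$ is connected and non-orientable, $\widetilde X$ is a connected, compact, orientable $n$-manifold. Given any $\omega \in \Omega^n(X)$, the pullback satisfies $\tau^*\pi^*\omega = \pi^*\omega$, so $\int_{\widetilde X}\pi^*\omega = \int_{\widetilde X}\tau^*\pi^*\omega = -\int_{\widetilde X}\pi^*\omega$, forcing $\int_{\widetilde X}\pi^*\omega = 0$. By the key lemma, $\pi^*\omega = d\eta$ for some $\eta\in\Omega^{n-1}(\widetilde X)$; set $\bar\eta = \tfrac12(\eta + \tau^*\eta)$, which is $\tau$-invariant and hence descends to a form $\zeta\in\Omega^{n-1}(X)$ with $\pi^*\zeta = \bar\eta$, while $d\bar\eta = \tfrac12(\pi^*\omega + \tau^*\pi^*\omega) = \pi^*\omega$. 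Since $\pi$ is a surjective local diffeomorphism, $\pi^*$ is injective on forms, so $d\zeta = \omega$. Thus every $n$-form on $X$ is exact and $H^n(X) = 0$.

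I expect the main obstacle to be the injectivity half of the key lemma, specifically the bookkeeping in the ``sliding a bump along a chain of overlapping balls'' step: one must keep every intermediate $(n-1)$-form compactly supported and make sure the orientation conventions of the charts are mutually consistent so that $H^n_c(\R^n) = \R$ is invoked with the correct sign. Once the double cover and the key lemma are available, the non-orientable step is purely formal.
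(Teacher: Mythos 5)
Your proposal is correct, and while the orientable half is essentially the paper's argument in disguise, the non-orientable half takes a genuinely different route. For orientability: your ``key lemma'' (integration is an isomorphism $H^n_c(Y)\to\R$ for $Y$ connected and oriented) is proved exactly as the paper proves its ``at most 1-dimensional'' step --- partition of unity, the fact that two compactly supported $n$-forms on $\R^n$ with equal integrals differ by $d$ of a compactly supported form, and a chain of overlapping coordinate balls to slide bump forms around; the paper just phrases it as ``every $n$-form is cohomologous to a multiple of one fixed bump form'' rather than as injectivity of $\int_Y$. (One small point of care in your version: the chain of balls must be allowed to run through arbitrary coordinate balls of $Y$, not just the finitely many covering $\operatorname{supp}\omega$, since that support need not be connected --- but connectedness of $Y$ supplies such a chain, and this is what the paper does too.) Where you genuinely diverge is the non-orientable case. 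The paper stays on $X$ itself: it runs a bump form around a loop of coordinate patches containing an odd number of orientation-reversing transitions, concludes the generator is cohomologous to its own negative, and hence zero; this is self-contained but leaves the sign bookkeeping largely to the reader. You instead pass to the orientation double cover $\widetilde X$, observe that $\pi^*\omega$ is invariant under the orientation-reversing deck involution $\tau$ and therefore integrates to zero, apply the key lemma on the connected oriented manifold $\widetilde X$, and average the primitive to make it $\tau$-invariant so it descends. This is cleaner and the averaging trick $\bar\eta=\tfrac12(\eta+\tau^*\eta)$ handles all the signs automatically, but it imports machinery the paper never develops: the existence and connectedness (for connected non-orientable $X$) of the orientation double cover, and the identity $\int_{\widetilde X}\tau^*\alpha=-\int_{\widetilde X}\alpha$. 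If you cite those standard facts, your argument is complete.
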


\begin{proof}
We prove the theorem in three steps:
\begin{enumerate}
\item Showing that, if $X$ is oriented, then $H^n(X)$ is at least 1-dimensional.
\item Showing that, regardless of orientation, $H^n(X)$ is at most 1-dimensional. 
\item Showing that, if $X$ is not oriented, that $H^n(X)$ is trivial. 
\end{enumerate} 

For the first step, suppose $\alpha \in \Omega^n(X)$. If $\alpha =
d\beta$ is exact, then by Stokes' Theorem, $\int_X \alpha = \int_X d
\beta = \int_{\partial X} \beta = 0$, since $\partial X$ is the empty
set.  Since every exact $n$-form integrates to zero, a closed form
that doesn't integrate to zero must represent an non-trivial class in
$H^n$. However, such forms are easy to generate. Pick a point $p$ and
a coordinate patch $U$, and take a bump form of total integral 1
supported on $U$.

For the second and third steps, we will show that an arbitrary $n$-form $\alpha$
is {\em cohomologous} to a finite sum of bump forms, where two closed
forms are said to be cohomologous if they differ by an exact
form. (That is, if they represent the same class in cohomology.) We then
show that any bump form is cohomologous to a multiple of a specific
bump form, which then generates $H^n(X)$. If $X$ is not orientable, we
will then show that this generator is cohomologous to minus
itself, and hence is exact.

Pick a partition of unity $\{ \rho_i\}$ subordinate to a collection of
coordinate patches. Any $n$-form $\alpha$ can then be written as
$\sum_i \rho_i \alpha_i$. Since $X$ is compact, this is a finite sum.
Now suppose that $\alpha_i = \rho_i \alpha$ is compactly supported in
the image of a parametrization $\psi_i: U_i \to X$.  Let $\phi_i$ be a
bump form on $U_i$ of total integral 1 localized around a point $a_i$,
and let $c_i = \int_{U_i} \psi_i^* \alpha_i$, where we are using the
canonical orientation of $\R^n$.  Then $\psi_i^*\alpha_i - c_i \phi_i$
is $d$ of a compactly supported $(n-1)$-form on $U_i$.  This implies
that $\alpha_i - c_i (\psi_i^{-1})^* \phi_i$ is $d$ of an $(n-1)$-form
that is compactly supported on $\psi_i(U_i)$, and can thus be extended
(by zero) to be a form on all of $X$. Thus $\alpha_i$ is cohomologous
to $c_i (\psi_i^{-1})^*\phi_i$.  That is, to a bump form localized
near $p_i = \psi_i(a_i)$.

The choice of $a_i$ was arbitrary, and the precise formula for the
bump form was arbitrary, so bump forms with the same integral
supported near different points of the same coordinate patch are
always cohomologous. However, this means that if $\phi$ and $\phi'$ are
bump $n$-forms supported
near {\em any} two points $p$ and $p'$ of $X$, then $\phi$ is cohomologous
to a multiple of $\phi'$. We
just find a sequence of coordinate patches $V_i=\psi_i(U_i)$ and
points $p_0=p \in V_1$, $p_1 \in V_1 \cap V_2$, $p_2 \in V_2 \cap
V_3$, etc. A bump form near $p_0$ is cohomologous to a multiple of a 
bump form near
$p_1$ since both points are in $V_1$. But that is cohomologous to a
multiple of a bump form near $p_2$ since both $p_1$ and $p_2$ are in $V_2$, etc.
Thus $\alpha$ is cohomologous to a sum of bump forms, which is in turn
cohomologous to a single bump form centered at an arbitrarily chosen
location. This shows that $H^n(X)$ is at most 1-dimensional.

Now suppose that $X$ is not orientable. Then we can find a sequence of
coordinate neighborhoods, $V_1,\ldots, V_N$ with $V_N=V_1$, such that
there are an odd number of orientation reversing transition
functions. Starting with a bump form at $p$, we apply the argument of
the previous paragraph, keeping track of integrals and signs, and 
ending up with a
bump form at $p'=p$ that is {\em minus} the original bump form. Thus twice
the bump form is cohomologous to zero, and the bump form itself is
cohomologous to zero. Since this form generated $H^n(X)$, $H^n(X)=0$.

\end{proof}

The upshot of this theorem is not only a calculation of $H^n(X)=\R$
when $X$ is connected, compact and oriented, but the derivation of a
generator of $H^n(X)$, namely {\em any} form whose total integral is
1. This can be a bump form, it can be a multiple of the
(n-dimensional) volume form, and there are infinitely many other
possibilities. The important thing is that integration gives an
isomorphism
$$ \int_X: H^n(X) \to \R.$$

\section{Poincare duality}

Suppose that $X$ is an oriented $n$-manifold (not
necessarily compact and not necessarily connected), 
that $\alpha$ is a closed, compactly supported
$k$-form, and that $\beta$ is a closed $n-k$ form.  Then $\alpha
\wedge \beta$ is compactly supported, insofar as $\alpha$ is compactly
supported.  Integration gives a map
\begin{eqnarray} \int_X: H^k_c(X) \times H^{n-k}(X) & \to & \R \cr 
[\alpha]\times[\beta] & \mapsto & \int_X \alpha \wedge \beta.
\end{eqnarray}

\smallskip

\noindent{\bf Exercise 2:} Suppose that $\alpha$ and $\alpha'$ represent
the same class in $H^k_c(X)$ and that $\beta$ and $\beta'$ represent
the same class in $H^{n-k}(X)$. Show that $\int_X \alpha' \wedge
\beta' = \int_X \alpha \wedge \beta$.

\smallskip

This implies that integration gives a map from $H^{n-k}(X)$ to the
dual space of $H^k_c(X)$.

\begin{thm}[Poincare duality] If $X$ is an orientable manifold that
  admits a finite good cover, then integration gives an isomorphism
  between $H^{n-k}(X))$ and $(H^k_c(X))^*$.
\end{thm}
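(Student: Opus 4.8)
The plan is to run the Mayer--Vietoris argument of the previous chapter. The property $P$ to be propagated is: ``integration gives an isomorphism $H^{n-j}(W)\xrightarrow{\sim}(H^j_c(W))^*$ in every degree $j$.'' First I would check the base case, $W$ diffeomorphic to $\R^n$ (the contractible case). By the Poincar\'e Lemma, $H^{n-j}(W)=\R$ for $j=n$ and $0$ otherwise; by the theorem on $H^*_c(\R^n)$, $H^j_c(W)=\R$ for $j=n$ and $0$ otherwise. For $j\ne n$ both sides vanish and there is nothing to check; for $j=n$ the pairing sends $(c\cdot 1,[\omega])$ to $c\int_W\omega$, and since $\int_W\colon H^n_c(W)\to\R$ is an isomorphism this is a perfect pairing of one-dimensional spaces (the empty set being trivial). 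So $P$ holds for contractible open sets.

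Next I would build the compactly supported analogue of the short exact sequence \eqref{short-exact-forms}. For $W=U\cup V$ with $U,V$ open, \emph{extension by zero} gives $\Omega^j_c(U\cap V)\to\Omega^j_c(U)\oplus\Omega^j_c(V)$, $\mu\mapsto(\mu,-\mu)$, and $\Omega^j_c(U)\oplus\Omega^j_c(V)\to\Omega^j_c(W)$, $(\alpha,\beta)\mapsto\alpha+\beta$. Surjectivity of the second map uses a partition of unity $\{\rho_U,\rho_V\}$: for $\omega\in\Omega^j_c(W)$ the form $\rho_U\omega$ has support inside the compact set $\mathrm{supp}(\rho_U)\cap\mathrm{supp}(\omega)\subset U$, hence lies in $\Omega^j_c(U)$; similarly $\rho_V\omega\in\Omega^j_c(V)$, and $\rho_U\omega+\rho_V\omega=\omega$. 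One checks the sequence $0\to\Omega^j_c(U\cap V)\to\Omega^j_c(U)\oplus\Omega^j_c(V)\to\Omega^j_c(W)\to 0$ is exact and commutes with $d$, so the Snake Lemma produces a long exact sequence
$$\cdots\to H^j_c(U\cap V)\to H^j_c(U)\oplus H^j_c(V)\to H^j_c(W)\xrightarrow{\delta}H^{j+1}_c(U\cap V)\to\cdots$$
This is covariant in open inclusions, so its arrows run opposite to ordinary Mayer--Vietoris. Since we work over $\R$, applying $(-)^*$ is exact; dualizing gives an exact sequence whose arrows run the same way as ordinary Mayer--Vietoris, with $(H^{j+1}_c(\cdot))^*$ sitting in the slot of degree $n-(j+1)$.

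For the inductive step, assume $P$ holds for $U$, $V$, and $U\cap V$. I would assemble the ladder whose top row is the ordinary Mayer--Vietoris sequence in degrees around $n-k$, whose bottom row is the dualized $H^*_c$-sequence above, and whose vertical maps are the integration pairings $H^{n-j}(\cdot)\to(H^j_c(\cdot))^*$. Each square must commute up to a sign depending only on the degree (absorbing such signs into the vertical maps does not change whether they are isomorphisms). For the squares built from restriction/extension and the wedge product this is immediate from $\beta\wedge\mathrm{ext}(\alpha)=\mathrm{ext}(\beta|_W\wedge\alpha)$ together with $\int_W\mathrm{ext}=\int$. \emph{The square involving the connecting homomorphisms is the main obstacle.} To handle it one represents $d^\sharp\eta$ (for $\eta$ a closed $(n-k-1)$-form on $U\cap V$) by $d(\rho_V\eta)$ on $U$ and $-d(\rho_U\eta)$ on $V$, and $\delta\xi$ (for $\xi$ a closed compactly supported $k$-form on $W$) by $(d\rho_U\wedge\xi)|_{U\cap V}$; then writing $\xi=\rho_U\xi+\rho_V\xi$, expanding $d(\rho_V\eta\wedge\rho_U\xi)$ and $d(\rho_U\eta\wedge\rho_V\xi)$, discarding the exact parts by Stokes' Theorem (both wedges are compactly supported), and using $d\eta=d\xi=0$ and $d\rho_U=-d\rho_V$, one gets $\int_W d^\sharp\eta\wedge\xi=(-1)^{n-k}\int_{U\cap V}\eta\wedge\delta\xi$ — a sign depending only on $n$ and $k$, as needed. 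With commutativity in hand, a diagram chase — the Five Lemma, of exactly the flavor of the Snake Lemma arguments above — shows the middle vertical map $H^{n-k}(W)\to(H^k_c(W))^*$ is an isomorphism, since its four neighbors are the $P$-isomorphisms for $U$, $V$, $U\cap V$ in adjacent degrees.

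Finally I would run the induction on the size of a finite good cover. Given $\{U_1,\dots,U_m\}$ good, set $U=U_1\cup\cdots\cup U_{m-1}$ and $V=U_m$. Then $V$ is contractible (base case), $U$ carries the good cover $\{U_1,\dots,U_{m-1}\}$ of size $m-1$, and $U\cap V=\bigcup_{i<m}(U_i\cap U_m)$ carries the good cover $\{U_i\cap U_m\}_{i<m}$ of size at most $m-1$, because any finite intersection of members of a good cover is contractible or empty. By induction on $m$ (the case $m=1$ being the contractible case), $P$ holds for $U$, $V$, and $U\cap V$, hence by the previous paragraph for $X$. All cohomology spaces appearing are finite dimensional — by the same Mayer--Vietoris argument, with $H^*_c(\R^n)$ as the base case on the compactly supported side — so the dualizations raise no set-theoretic issue, though the Five Lemma would apply regardless.
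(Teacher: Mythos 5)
Your proposal is correct and follows the same route as the proof sketched in the text: base case on a contractible patch, the compactly supported Mayer--Vietoris sequence dualized over $\R$, commutativity of the integration ladder up to sign, the Five Lemma, and induction on the size of a good cover. In fact you have filled in precisely the pieces the notes defer to Exercises 4 and 5 (the extension-by-zero short exact sequence and the sign-commutativity of the square involving the two connecting homomorphisms), so your argument is a more complete version of the same proof.
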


\begin{proof} A complete proof is beyond the scope of these
  notes. A complete presentation can be found in Bott and Tu.  Here is
  a sketch.
\begin{itemize}
\item The theorem is true for a single coordinate patch, since
  $H^{n-k}(\R^n)$ and $H^k_c(\R^n)^*$ are both $\R$ when $k=n$ and 0
  otherwise, with integration relating the two as above.
\item We construct a Mayer-Vietoris sequence for compactly supported cohomology. With regard to incusions, 
this runs the opposite direction 
as the usual Mayer-Vietoris sequence:
$$ \cdots \rightarrow H^k_c(U\cap V) \rightarrow H^k_c(U) \oplus H^k_c(V) \rightarrow H^k_c(U \cup V) \rightarrow H^{k+1}(U \cap V) \rightarrow \cdots$$
\item Looking at dual spaces,  we obtain an exact sequence 
$$ \cdots \rightarrow H^k_c(U \cup V)^* \rightarrow H^k_c(U)^* \oplus H^k_c(V)^* \rightarrow H^k_c(U\cap V)^* 
\rightarrow H^{k-1}_c(U \cup V)^* \rightarrow \cdots $$ With respect
to inclusions, this goes in the same direction as the usual
Mayer-Vietoris sequence (going from the cohomology of $U\cup V$ to
that of $U$ and $V$ to that of $U \cap V$ to that of $U \cup V$, etc.), only
with the index $k$ decreasing at the $U \cap V \to U \cup V$ stage
instead of increasing. However, this is precisely what we need for Poincare
duality, since decreasing the dimension $k$ is the same thing as
increasing the codimension $n-k$.
\item The {\em five lemma} in homological algebra says that if you have a commutative diagram
$$ \begin{CD}
A @>>> B @>>> C @>>> D @>>> E \cr  
@VV\alpha V @VV\beta V @VV\gamma V @VV\delta V @VV\epsilon V \cr 
A' @>>> B' @>>> C' @>>> D' @>>> E' 
\end{CD}
$$
with the rows exact, and if $\alpha$, $\beta$, $\delta$ and $\epsilon$
are isomorphisms, then so is $\gamma$. Comparing the Mayer-Vietoris
sequences for $H^{n-k}$ and for $(H^k_c)^*$, the five lemma says that,
if for all $k$ integration gives isomorphisms between $H^{n-k}(U)$ and
$H^k_c(U)^*$, between $H^{n-k}(V)$ and $H^k_c(V)^*$, and between
$H^{n-k}(U\cap V)$ and $H^k_c(U\cap V)^*$, then
integration also induces isomorphisms between $H^{n-k}(U\cup V)$ and
$H^k_c(U\cup V)^*$.
 \item We then use the Mayer-Vietoris argument, proceeding by 
induction on the cardinality of a good cover. 
 \end{itemize}
 \end{proof}
 
\smallskip

 \noindent{\bf Not-so-hard Exercise 3:} Prove the Five Lemma.

\smallskip

\noindent{\bf Somewhat harder Exercise 4:} Set up the Mayer-Vietoris sequence 
for compactly supported cohomology, using the fact that a compactly supported
form on $U\cap V$ can be extended by zero to give a compactly supported form
on $U$ or on $V$, thus defining a map $i: \Omega^k_c(U \cap V) \to 
\Omega^k_c(U)\oplus \Omega^k_c(V)$, and that we can similarly define a map
$j: \Omega^k_c(U)\oplus \Omega^k_c(V) \to \Omega^k(U\cup V)$. 

\smallskip

 \noindent{\bf Much harder Exercise 5:} Fill in the details of the proof of
 Poincare duality. The main effort is in setting up the two Mayer-Vietoris
sequences and showing that connecting them by integration gives a 
commutative diagram. You may have to tweak the signs of some of the maps 
to make the diagram commutative. Or you can prove a souped up version
of the five lemma in which every box of the diagram commutes up to sign,
and apply that version directly. 

\smallskip
  
If $X$ is compact, then there is no difference between
 $H^k$ and $H^k_c$. In that case, we have

 \begin{cor} [Poincare duality for compact manifolds] If $X$ is a
   compact oriented manifold, then $H^k(X)$ and $H^{n-k}(X)$ are dual
   spaces, with a pairing given by integration.
  \end{cor}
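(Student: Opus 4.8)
The plan is to deduce this corollary directly from the Poincar\'e duality theorem just proved, using two facts already established in these notes. First I would invoke the theorem that every compact manifold admits a finite good cover, so that the hypothesis of the Poincar\'e duality theorem is automatically met once $X$ is compact and oriented. Second, I would observe that on a compact manifold every form is automatically compactly supported, so $\Omega^k_c(X) = \Omega^k(X)$ as cochain complexes, and hence $H^k_c(X) = H^k(X)$ for every $k$. Feeding this identification into the conclusion of the theorem gives that integration induces an isomorphism $H^{n-k}(X) \xrightarrow{\sim} (H^k(X))^*$; equivalently, the pairing $([\alpha],[\beta]) \mapsto \int_X \alpha \wedge \beta$ of $H^k(X)$ with $H^{n-k}(X)$ is perfect.

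Next I would pin down what ``dual spaces'' should mean and check that the statement is symmetric in $k$ and $n-k$. A priori the theorem only says $H^{n-k}(X)$ is \emph{the} dual of $H^k_c(X)$; to conclude that the two spaces are dual to each other I would use the earlier theorem that $H^k(X)$ is finite-dimensional whenever $X$ is compact. The pairing is nondegenerate in the first variable because $H^{n-k}(X) \cong H^k(X)^*$ is an isomorphism, so the adjoint map $H^k(X) \to (H^{n-k}(X))^*$ is injective; since both sides are finite-dimensional of equal dimension (using $\dim H^k(X) = \dim H^{n-k}(X)$, which itself follows from the isomorphism), this adjoint is an isomorphism as well. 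Hence integration simultaneously realizes $H^{n-k}(X)$ as $H^k(X)^*$ and $H^k(X)$ as $H^{n-k}(X)^*$.

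I do not expect a genuine obstacle here: every ingredient is already in hand, and the argument is essentially bookkeeping — replace $H^k_c$ by $H^k$ in the statement of the theorem, then record the finite-dimensionality needed to make the duality symmetric. The only points to handle with care are (i) not overclaiming: for non-compact $X$ the identification $H^k_c = H^k$ fails, so the corollary is special to the compact case; and (ii) carrying the orientation hypothesis through, since orientability is exactly what makes $\int_X$ defined on top-degree forms and hence makes the pairing meaningful.
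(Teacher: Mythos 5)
Your proposal is correct and follows essentially the same route as the paper, which simply observes that on a compact manifold $H^k_c(X)=H^k(X)$ and then specializes the general duality theorem (the finite good cover being supplied by the earlier theorem on compact manifolds). Your extra care in using finite-dimensionality of $H^k(X)$ to make the pairing symmetric in $k$ and $n-k$ is a point the paper leaves implicit, but it is the same argument.
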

    
\smallskip

\noindent{\bf Exercise 6:} Suppose that $X$ is compact, oriented,
connected and simply connected.
Show that $H^1(X)=0$, and hence that $H^{n-1}(X)=0$. [Side note:
The assumption
of orientation is superfluous, since all simply connected manifolds are
orientable.] 

\smallskip

This exercise shows that a compact, connected, simply connected 3-manifold
has the same cohomology groups as $S^3$. The recently-proved Poincare 
conjecture asserts that all such manifolds are actually homeomorphic to $S^3$.
There are plenty of other (not simply connected!) compact 3-manifolds whose 
cohomology groups are also the same as those $S^3$, namely $H^0=H^3=\R$ and 
$H^1=H^2=0$. These are called {\em rational homology spheres}, and come up
quite a bit in gauge theory. 

\section{Degrees of mappings}
    
Suppose that $X$ and $Y$ are both compact, oriented
$n$-manifolds. Then $H^n(X)$ and $H^n(Y)$ are both naturally
identified with the real numbers. If $f: X \to Y$ is a smooth map,
then the pullback $f_n^\sharp: H^n(Y) \to H^n(X)$ is just
multiplication by a real number. We call this number the {\em degree
  of $f$}. Since homotopic maps induce the same map on cohomology, we
immediately deduce that homotopic maps have the same degree.

However, we already have a definition of degree from intersection
theory! Not surprisingly, the two definitions agree.

\begin{thm} Let $p$ be a regular value of $f$, an let $D$ be the
  number of preimages of $p$, counted with sign.  If $\alpha \in
  \Omega^n(Y)$, then $\int_X f^*\alpha = D \int_Y \alpha$.
\end{thm}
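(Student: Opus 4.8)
The plan is to reduce the identity to the case where $\alpha$ is a bump form supported near $p$, for which the statement is essentially the definition of the signed count $D$ combined with the change-of-variables behaviour of pullbacks of top forms under diffeomorphisms (Exercise 7 of Chapter 1). Throughout I assume, as is standard for the notion of degree, that $X$ and $Y$ are connected.

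First I would observe that both sides of the asserted equation depend only on the cohomology class $[\alpha]\in H^n(Y)$. The right-hand side does because $\int_Y$ annihilates exact forms, by Stokes' Theorem on the closed manifold $Y$. The left-hand side does because if $\alpha=\alpha'+d\eta$, then $f^*\alpha=f^*\alpha'+d(f^*\eta)$ and $\int_X d(f^*\eta)=\int_{\partial X}f^*\eta=0$ since $X$ has empty boundary. Since $Y$ is compact, connected and oriented, the previous section gives that $\int_Y\colon H^n(Y)\to\R$ is an isomorphism, so $\alpha$ is cohomologous to $c\,\omega$, where $c=\int_Y\alpha$ and $\omega$ is any fixed bump $n$-form with $\int_Y\omega=1$. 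Hence it suffices to prove $\int_X f^*\omega=D$ for one conveniently chosen such $\omega$.

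Next I would exploit regularity of $p$. Because $p$ is a regular value and $X$ is compact, $f^{-1}(p)=\{q_1,\dots,q_m\}$ is a finite set, and by the inverse function theorem each $q_i$ has an open neighborhood $W_i$ (the $W_i$ pairwise disjoint) on which $f$ restricts to a diffeomorphism onto an open set containing $p$; the sign $\epsilon_i=\pm1$ attached to $q_i$ records whether $df_{q_i}$ preserves or reverses orientation, so $D=\sum_i\epsilon_i$. The one genuinely nontrivial point, and the place compactness is used, is that one may shrink the coordinate ball around $p$ to an open set $V\ni p$ small enough that $f^{-1}(V)\subset\bigsqcup_i W_i'$ where $W_i':=(f|_{W_i})^{-1}(V)$: indeed $K:=X\setminus\bigcup_i W_i$ is compact, so $f(K)$ is a compact subset of $Y$ not containing $p$, and we take $V$ disjoint from $f(K)$ and contained in each $f(W_i)$. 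Choosing $\omega$ supported in $V$ with $\int_V\omega=1$, we get $\operatorname{supp}(f^*\omega)\subset f^{-1}(V)\subset\bigsqcup_i W_i'$, so $\int_X f^*\omega=\sum_i\int_{W_i'}f^*\omega$.

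Finally, on each piece $f|_{W_i'}\colon W_i'\to V$ is a diffeomorphism, orientation-preserving exactly when $\epsilon_i=+1$. By the diffeomorphism-invariance of the integral of a top form (Exercise 7 of Chapter 1, together with the coordinate-independence established in Chapter 1), $\int_{W_i'}f^*\omega=\epsilon_i\int_V\omega=\epsilon_i$. Summing gives $\int_X f^*\omega=\sum_i\epsilon_i=D$, and combining with the first paragraph yields $\int_X f^*\alpha=c\int_X f^*\omega=D\int_Y\alpha$. The main obstacle is the shrinking-of-$V$ step: getting $f^{-1}(V)$ trapped inside the disjoint union of the good neighborhoods is where one must argue carefully with compactness; the rest is orientation bookkeeping and citation of results already in hand.
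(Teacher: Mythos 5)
Your proof is correct and follows essentially the same route as the paper: reduce to a bump form supported near $p$ using the isomorphism $\int_Y\colon H^n(Y)\to\R$ and the fact that both sides are cohomology invariants, then evaluate on the bump form via the stack-of-records decomposition and orientation-signed change of variables. The only difference is that you prove the stack-of-records localization (shrinking $V$ so that $f^{-1}(V)$ lands in the disjoint union of the $W_i$) from scratch, whereas the paper simply cites the stack-of-records theorem; this is a welcome bit of added rigor, not a different argument.
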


\begin{proof} First suppose that $\alpha$ is a bump form localized in
  such a small neighborhood $V$ of $p$ that the stack-of-records
  theorem applies.  That is, $f^{-1}$ is a discrete collection of sets
  $U_i$ such that $f$ restricted to $U_i$ is a diffeomorphism to $V$.
  But then $\int_{U_i} f^*\alpha = \pm \int_V \alpha = \pm \int_Y\alpha$,
where the $\pm$ is the sign of $\det f$ at the preimage of $p$. 
But then $\int_X f^* \alpha = \sum_i \int_{U_i} f^* \alpha = \sum_i
  \hbox{sign}(\det(df)) \int_V \alpha = D \int_Y \alpha$.

  Now suppose that $\alpha$ is an arbitrary $n$-form. Then $\alpha$ is
  cohomologous to a bump form $\alpha'$ localized around $p$, and
  $f^*\alpha$ is cohomologous to $f^*(\alpha')$, so
$$ \int_X f^*\alpha = \int_X f^*(\alpha') = D \int_Y \alpha' = D \int_Y \alpha.$$
\end{proof}

Here is an example of how this shows up in differential geometry. Let
$X$ be a compact, oriented 2-manifold immersed (or better yet,
embedded) in $\R^3$. At each point $x \in X$, the normal vector $\vec
n(x)$ (with direction chosen so that $\vec n$ followed by an oriented
basis for $T_xX$ gives an oriented basis for $T_x(\R^3)$) is a point
on the unit sphere. That is, $\vec n$ is a map $X \to S^2$. $d \vec n$ is
then a map from $T_x(X)$ to $T_{\vec v(x)}S^n$. But $T_x(X)$ and $T_{\vec v(x)}S^n$
are the same space, being the orthogonal complement of $\vec v(x)$ in 
$\R^{n+1}$. Composing $d \vec v$ 
with this identification of $T_x(X)$ and $T_{\vec v(x)}S^n$, we get an operator
$S: T_x(X) \to T_x(X)$ called the {\em shape operator} or {\em Weingarten
map}. [Note: Some authors use $-d\vec v$ instead of $d \vec v$. This
amounts to just flipping the sign of $\vec v$.] The eigenvalues of $S$ are 
called the {\em principal curvatures} of $X$ at $x$, and the determinant
is called the {\em Gauss curvature}, and is denoted $K(x)$.

Let
$\omega_2$ be the area 2-form on $\S^2$ (explicitly: $\omega_2 = x dy
\wedge dz + y dz \wedge dx + z dx \wedge dy$). The pullback $\vec n^*
\omega_2$ is then $K(x)$ times the area form on $X$. 

\smallskip

\noindent {\bf Exercise 7:} Show that the last sentence is true
{\em regardless of the orientation of $X$}.

\smallskip

The following three exercises are designed to give you some intuition
on what $K$ means.

\noindent{\bf Exercise 8:} Let $X$ be the hyperbolic paraboloid
$z=x^2-y^2$. Show that $K$ at the origin is negative, regardless of
which orientation you pick for $X$. In other words, show that $\vec n$
is orientation reversing near $0$.

\noindent {\bf Exercise 9:} Let $X$ be the elliptic paraboloid
$z=x^2+y^2$. Show that $K$ at the origin is positive.

\noindent {\bf Exercise 10:} Now let $X$ be a general paraboloid $z =
ax^2 + bxy + cy^2$, where $a$ and $b$ and $c$ are real numbers.
Compute $K$ at the origin. [Or for a simpler exercise, do this for
$b=0$ and arbitrary $a$ and $c$.]

\begin{thm}[Gauss-Bonnet] $\int_X K(x) dA = 2 \pi \chi(X)$. \end{thm}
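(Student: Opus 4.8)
\noindent The plan is to reduce Gauss--Bonnet to a computation of the degree of the Gauss map $\vec{n}:X\to S^2$, and then to identify that degree with $\tfrac12\chi(X)$.

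First I would rewrite the left-hand side. By the computation preceding the theorem, $\vec{n}^*\omega_2 = K(x)\,dA$, where $\omega_2 = x\,dy\wedge dz + y\,dz\wedge dx + z\,dx\wedge dy$ and $dA$ is the area form on $X$ (this identity holds regardless of the orientation of $X$, by Exercise~7 of this section). Hence $\int_X K\,dA = \int_X \vec{n}^*\omega_2$. Since $X$ and $S^2$ are compact oriented $2$-manifolds, the degree theorem of the previous section applies: if $D$ is the signed number of preimages under $\vec{n}$ of a regular value, then $\int_X \vec{n}^*\omega_2 = D\int_{S^2}\omega_2 = 4\pi D$, because $\int_{S^2}\omega_2$ is the area of the unit sphere, namely $4\pi$. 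So it remains to show $D = \tfrac12\chi(X)$.

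To compute $D$, I would use height functions. Choose (via Sard) a $v\in S^2$ that is a regular value of $\vec{n}$; this is equivalent to the height function $h_v(x)=\langle x,v\rangle$ on $X$ being Morse, since a point $p$ is critical for $h_v$ exactly when $T_pX\perp v$, i.e.\ $\vec{n}(p)=\pm v$, and $\vec{n}$ is a submersion there precisely when $\mathrm{Hess}_p h_v$ is nondegenerate. Thus the critical set of $h_v$ is the disjoint union $\vec{n}^{-1}(v)\sqcup\vec{n}^{-1}(-v)$. Writing $X$ locally as a graph over $T_pX$ with $v$ as the vertical direction, one checks that $\mathrm{Hess}_p h_v$ equals the second fundamental form at $p$ (up to the sign ambiguity of the normal), so $\det\mathrm{Hess}_p h_v = K(p)$; and the identity $\vec{n}^*\omega_2 = K\,dA$ says exactly that $\mathrm{sign}\det(d\vec{n}_p)=\mathrm{sign}\,K(p)$. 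Computing $D$ using $v$ and then using $-v$ and adding, and recalling that $D$ is independent of the regular value, I get
\[
2D = \sum_{p\in\vec{n}^{-1}(v)}\mathrm{sign}\,K(p)\;+\;\sum_{p\in\vec{n}^{-1}(-v)}\mathrm{sign}\,K(p)\;=\;\sum_{p:\,dh_v(p)=0}(-1)^{\mathrm{ind}_p(h_v)},
\]
where the last equality uses $\mathrm{sign}\det\mathrm{Hess}_p h_v=(-1)^{\mathrm{ind}_p(h_v)}$. The right-hand side is the sum of the indices of the vector field $\nabla h_v$ on the compact manifold $X$, which by the Poincar\'e--Hopf theorem equals $\chi(X)$. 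Hence $2D=\chi(X)$, and therefore $\int_X K\,dA = 4\pi D = 2\pi\chi(X)$.

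The main obstacle is this final topological input: Poincar\'e--Hopf (equivalently, that the alternating sum of Morse indices of a Morse function is $\chi(X)$) is not developed in these notes and must be imported from the companion intersection-theory material. Everything else is either a formal consequence of the degree theorem just proved, or the elementary local computation identifying $\det\mathrm{Hess}_p h_v$ with $K(p)$ at a critical point. A secondary point needing a little care is verifying that ``$v$ is a regular value of $\vec{n}$'' and ``$h_v$ is Morse'' really are the same condition and that generic $v$ achieves both; note also that only compactness of $X$ and the pointwise Gauss map are used, so the argument covers immersions as well as embeddings.
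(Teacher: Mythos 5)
Your proof is correct, but it takes a genuinely different route from the one in the notes for this particular theorem. The notes first deform the immersed genus-$g$ surface into the standard position used for the ``hot fudge'' map (a homotopy, so the degree is unchanged), then observe that $(0,0,1)$ is a regular value of $\vec n$ with $g+1$ preimages, one orientation-preserving and $g$ orientation-reversing, giving degree $1-g=\chi(X)/2$ directly; this leans on the classification of compact oriented surfaces and a somewhat informal picture. You instead compute the degree by summing signed preimages over the antipodal pair $\pm v$, identify these with the critical points of the height function $h_v$, match $\mathrm{sign}\,K(p)$ with $(-1)^{\mathrm{ind}_p(h_v)}$, and invoke Poincar\'e--Hopf to get $2D=\chi(X)$. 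What your approach buys is independence from the classification of surfaces and from the ad hoc deformation, and it generalizes verbatim to even-dimensional hypersurfaces --- indeed it is essentially the argument the notes themselves give for the even-dimensional Gauss--Bonnet theorem immediately afterward (there phrased via the tangential projection $\vec w$ of $\vec a$ rather than via $\nabla h_v$, which is the same vector field). The cost is the same in both cases: an external topological input (Poincar\'e--Hopf for you, the genus-$g$ normal form plus homotopy invariance for the notes). Your two points of care --- that a generic $v$ has both $v$ and $-v$ regular, and that regularity of $\pm v$ for $\vec n$ is equivalent to $h_v$ being Morse --- are exactly the right ones, and both check out.
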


\begin{proof} Since the area of $S^2$ is $4 \pi$, we just have to show
  that the degree of $\vec v$ is half the Euler characteristic of
  $X$. First we vary the immersion to put our Riemann surface of genus
  $g$ in the position used to illustrate the ``hot fudge'' map.  This
  gives a homotopy of the original map $\vec v$, but preserves the
  degree. Then $(0,0,1)$ is a regular value of $\vec n$, and has $g+1$
  preimages, of which one (at the top) is orientation preserving and
  the rest are orientation reversing. Thus the degree is $1-g =
  \chi(X)/2$. \end{proof}

This construction, and this theorem, extends to oriented hypersurfaces 
in higher dimensions.
If $X$ is a compact oriented $n$-manifold in $\R^{n+1}$, we can define the
oriented normal vector $\vec v(x) \in S^n$, so we still have a map
$\vec v: X \to S^n$ and a shape operator $S: T_x(X) \to T_x(X)$. 
The shape operator is always self-adjoint, and so is diagonalizable with 
real eigenvalues, called the principal curvatures of $X$, and orthogonal 
eigenvectors, called the principal {\em directions}. 
Let $\omega_n$ be the volume form on $S^n$, and
we write $\vec v^* \omega_n = K(x) dV$, where $dV$ is the volume form
on $X$. As before, $K(x)$ is called the Gauss curvature of $X$, and is the 
determinant of $S$. 

\begin{thm}[Gauss-Bonnet in even dimensions] If $X$ is a compact $n$-dimensional hypersurface in $\R^{n+1}$, with $n$ even, then $\int_X K(x) dV =
\frac12 \gamma_n \chi(X)$, where $\gamma_n$ is the $n$-dimensional volume
of $S^n$. 
\end{thm}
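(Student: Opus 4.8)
The plan is to see that the only content here beyond what has already been proved is a purely topological computation of the degree of the Gauss map. By the theorem relating pullbacks and degrees, $\int_X \vec n^{*}\omega_n = \deg(\vec n)\int_{S^n}\omega_n = \gamma_n\deg(\vec n)$, and since $\vec n^{*}\omega_n = K(x)\,dV$ (with $\int_X K\,dV$ well defined and orientation-insensitive for $n$ even, cf.\ Exercise 7), the assertion is equivalent to $\deg(\vec n) = \tfrac12\chi(X)$. So everything reduces to establishing this identity when $n$ is even.

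To compute $\deg(\vec n)$ I would use a generic linear height function. By Sard's theorem, pick a unit vector $v\in S^n$ so that both $v$ and $-v$ are regular values of $\vec n$, and set $h(x)=\langle x,v\rangle$, a smooth function on $X$. A point $p$ is critical for $h$ exactly when $v\perp T_pX$, i.e.\ when $\vec n(p)=\pm v$, so the critical set of $h$ is $\vec n^{-1}(v)\cup\vec n^{-1}(-v)$. At such a $p$ the Hessian of $h|_X$ is $\langle v,\vec n(p)\rangle$ times the second fundamental form, hence equals $\pm$ the shape operator $S_p$ (under the usual identification of $T_pX$ with $T_{\vec n(p)}S^n$), and this is nonsingular precisely because $v$ (equivalently $-v$) is a regular value of $\vec n$. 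Thus $h$ is a Morse function, and the Poincare--Hopf / Morse-theoretic count gives $\sum_{p}(-1)^{\lambda(p)}=\chi(X)$, the sum running over all critical points of $h$ with $\lambda(p)$ the Morse index.

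The crux, and the step I expect to require the most care, is the sign bookkeeping linking the Morse index to the Jacobian sign of $\vec n$. If the Hessian of $h|_X$ at a critical point $p$ is $\varepsilon S_p$ with $\varepsilon=\pm1$, then $(-1)^{\lambda(p)}=\operatorname{sign}\det(\varepsilon S_p)=\varepsilon^{\,n}\operatorname{sign}\det S_p$, which for $n$ even is simply $\operatorname{sign}\det S_p=\operatorname{sign} K(p)=\operatorname{sign}\det(d\vec n_p)$; checking that the identification of $T_pX$ with $T_{\vec n(p)}S^n$ is orientation-preserving, so that this last equality holds with the right sign (exactly as in the two-dimensional ``hot fudge'' picture), is where the real vigilance is needed. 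Granting this, $\chi(X)=\sum_{p}(-1)^{\lambda(p)}=\sum_{\vec n(p)=v}\operatorname{sign}\det(d\vec n_p)+\sum_{\vec n(p)=-v}\operatorname{sign}\det(d\vec n_p)$, and each of the two sums is precisely the signed preimage count computing $\deg(\vec n)$ at the regular value $v$, respectively $-v$. Hence $\chi(X)=2\deg(\vec n)$, and therefore $\int_X K\,dV=\gamma_n\deg(\vec n)=\tfrac12\gamma_n\chi(X)$. This also makes transparent why $n$ even is essential: for $n$ odd the factor $\varepsilon^{\,n}$ flips the two halves against each other and the sum telescopes to $0$, consistent with $\chi$ vanishing on closed odd-dimensional manifolds.
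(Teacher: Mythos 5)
Your proof is correct and is essentially the paper's argument: the paper applies the Hopf degree formula (Poincar\'e--Hopf) to the vector field $\vec w(x)=$ projection of $\vec a$ onto $T_xX$, which is precisely the gradient of your height function $h$, so your Morse-theoretic count $\sum_p(-1)^{\lambda(p)}=\chi(X)$ is the same identity in gradient form. The only difference is that you actually carry out the sign bookkeeping $(-1)^{\lambda(p)}=\varepsilon^{\,n}\operatorname{sign}\det S_p=\operatorname{sign}\det(d\vec n_p)$ that the paper delegates to its Exercise 11, which is a welcome addition.
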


\begin{proof} As with the 2-dimensional theorem, the key is showing that 
the degree of $\vec v$ is half the Euler characteristic of $X$. Instead of 
deforming the immersion of $X$ into a standard form, we'll use the Hopf 
degree formula. 

Pick a point $\vec a \in S^n$ such that $\vec a$ and $-\vec a$ 
are both regular values of 
$\vec v$. Then the total number of preimages of $\pm \vec a$, 
counted with sign,
is twice the degree of $\vec v$. We now construct a vector field $\vec w(x)$
on $X$, where $w(x)$ is the projection of $\vec a$ onto $T_x(X)$. This is 
zero precisely where the normal vector is $\pm \vec a$. 

\noindent{\bf Exercise 11:} Show that the index of $\vec w$ at such a point is 
the sign of $\det (d \vec v)$. 

By the Hopf degree formula, the Euler characteristic of $X$ is then twice
the degree of $\vec v$. Since $\int_X K(x) dV = \int_X \vec v^* \omega_n
= \frac{\hbox{Degree}}{2} \gamma_n$, the theorem follows.

\end{proof}

If $X$ is odd-dimensional, then $\chi(X)=0$, but $\int_X K(x) dV$ need not
be zero. A simple counter-example is $S^n$ itself. 

Another application of degrees and differential forms comes up in knot 
theory.  If $X$ and $Y$ are
non-intersecting oriented loops in $\R^3$, then there is a map $f(X
\times Y) \to S^2$, $f(x,y) = (y-x)/|y-x|$. The {\em linking number}
of $X$ and $Y$ is the degree of this map. This can be computed in
several ways. One is to pick a point in $S^2$, say (0,0,1), and count
preimages. These are the instances where a point in $Y$ lies directly
above a point in $X$. In other words, the linking number counts
crossings with sign between the knot diagram for $X$ and the knot
diagram for $Y$.  However, it can also be expressed via an integral
formula.

\noindent {\bf Exercise 12:} Supposed that $\gamma_1$ and $\gamma_2$ are
non-intersecting loops in $\R^3$, where each $\gamma_i$ is, strictly
speaking, a map from $S^1$ to $\R^3$. Then $f$ is a map from $S^1
\times S^1$ to $S^2$.  Compute $f^* \omega_2$ in terms of
$\gamma_1(s)$, $\gamma_2(t)$ and their derivatives.  By integrating
this, derive an integral formula for the linking number.

\end{document}